\documentclass[12pt]{article}
\usepackage[margin=1in]{geometry}
\usepackage{amssymb,amsfonts,amsmath,amscd,amsthm,amsxtra}
\usepackage[table, dvipsnames]{xcolor}
\usepackage{caption}
\usepackage{array}
\usepackage{graphicx,tikz, pgf}
\usepackage{calc}
\usepackage{float}
\usepackage{subcaption}
\usepackage{mathrsfs}
\usepackage{enumerate}
\usepackage{bm}
\usepackage{fancyhdr}
\usepackage{hyperref}
\hypersetup{colorlinks=true,linkcolor=blue,citecolor=blue,urlcolor=blue}

\newtheorem{theorem}{Theorem}[section]
\newtheorem{lemma}[theorem]{Lemma}
\newtheorem{proposition}[theorem]{Proposition}
\newtheorem{corollary}[theorem]{Corollary}

\theoremstyle{definition}

\newtheorem{definition}[theorem]{Definition}

\DeclareMathOperator{\pd}{pd}
\DeclareMathOperator{\reg}{reg}
\DeclareMathOperator{\depth}{depth}
\DeclareMathOperator{\height}{ht}
\DeclareMathOperator{\dimn}{dim}

\title{Squarefree Powers of Edge Ideals under Graph Joins and Cones}
\author{Bilal Ahmad Wani$^{a}$\footnote{Corresponding author.}\hspace{0.5cm}and Uzair Rafiq Shah$^{b}$}
\date{\footnotesize $^{a,b}$Department of Mathematics, National Institute of Technology Srinagar, 190006, India\\[2pt]
	 $^{a}$bilalwani@nitsri.ac.in \quad
    $^{b}$shahuzair1909@gmail.com}

\begin{document}
\maketitle
\newcommand{\wn}{\mathcal{W}_n}
\newcommand{\cn}{\mathcal{C}_n}

\pagestyle{fancy}
\fancyhf{}
\fancyhf[OHC]{B. A. Wani and U. R. Shah}
\fancyhf[EHC]{Squarefree powers of edge ideals of wheel graphs}
\fancyfoot[C]{\thepage}
\renewcommand{\headrulewidth}{0pt}

\begin{abstract}
\noindent
For a graph $G$ and an integer $q\ge1$, the squarefree power $I(G)^{[q]}$ is the Stanley-Reisner ideal of the complex, $\Delta_q(G)=\{F\subseteq V(G):\nu(G[F])<q\}$, of vertex sets $F$ for which the induced subgraph $G[F]$ has matching number less than $q$. We prove that the matching number of an arbitrary graph join satisfies
\[
\nu(G\ast H) = \min\Big(\nu(G)+|V(H)|,\ \nu(H)+|V(G)|,\ \Big\lfloor\tfrac{|V(G)|+|V(H)|}2\Big\rfloor\Big),
\]
which yields a complete membership criterion for $\Delta_q(G\ast H)$ and, for cone graphs, a formula reducing the depth of $R/I(\mathrm{Cone}(G))^{[q]}$ to the depths of two explicit ideals defined on $G$ alone. Applied to the wheel graph $\wn=\cn\ast\mathcal K_1$, this determines the Krull dimension and height of $R/I(\wn)^{[q]}$ for all $n\ge3$ and $1\le q\le\lfloor n/2\rfloor$, and shows that at the top power $q=\nu(\wn)=\lceil n/2\rceil$ the ideal $I(\wn)^{[q]}$ is the squarefree Veronese ideal $\mathfrak m^{[2q]}$, so that $R/I(\wn)^{[q]}$ is Cohen-Macaulay with $\dimn=\depth=\reg=2q-1$. Combined with a general lower bound on the depth of a squarefree monomial ideal in terms of its generator degrees, this in fact proves $\depth(R/I(\wn)^{[q]})=2q-1$ for every $n\ge3$ and every $1\le q\le\nu(\wn)$, resolving the depth of every squarefree power of every wheel graph completely. The same methods give analogous results for fan, generalized wheel, complete split, and friendship graphs.

\medskip
\noindent
\textbf{2020 Mathematics Subject Classification:} 13D02, 13F55, 05C70, 05E40\\
\textbf{Key words:} squarefree power, edge ideal, graph join, wheel graph, matching number, Tutte-Berge formula, Cohen-Macaulay, Krull dimension, depth, regularity.
\end{abstract}

\section{Introduction}

Let $R=\Bbbk[x_1,\ldots,x_n]$ be a polynomial ring over a field $\Bbbk$ and let $G$ be a finite simple graph on $\{x_1,\ldots,x_n\}$. The edge ideal $I(G)\subseteq R$, generated by $\{x_ix_j:\{x_i,x_j\}\in E(G)\}$, was introduced by Villarreal \cite{V1}, the graded Betti numbers, projective dimension and regularity of $R/I(G)$ and its ordinary powers have since been studied extensively \cite{herhib,HVT1,jacq,katz,peeva,rova}.

More recently, Erey, Herzog, Hibi and Saeedi Madani \cite{EHHM} introduced the \emph{squarefree powers}
\[
I(G)^{[q]} := \Big(\, \prod_{v\in V(M)} x_v \ \Big| \ M \text{ a } q\text{-matching of } G \,\Big) \subseteq R \qquad (q\ge 1),
\]
which specialize at $q=1$ to the ordinary edge ideal. Since every $q$-matching has exactly $2q$ vertices, $I(G)^{[q]}$ is automatically the Stanley-Reisner ideal of
\begin{equation}\label{eq:deltaq}
\Delta_q(G) := \{ F \subseteq V(G) : \nu(G[F]) < q \}, \qquad \nu(H):=\text{matching number of } H,
\end{equation}
and $\Delta_1(G)$ is the independence complex of $G$. Regularity of squarefree powers has been determined for forests \cite{CFL}, cycles \cite{DRS}, whiskered cycles \cite{DGS}, and Cameron-Walker and several well-covered graph classes \cite{Fak}, the top power $I(G)^{[\nu(G)]}$ has been the subject of particular attention, with Bigdeli, Herzog and Zaare-Nahandi \cite{BHN} proving $\reg(I(G)^{[\nu(G)]})=2\nu(G)$ for \emph{every} graph $G$, and Ficarra and Moradi \cite{FM} recently giving a complete, purely combinatorial (Tutte-type) characterization of when $I(G)^{[\nu(G)]}$ is Cohen-Macaulay. To our knowledge, wheel graphs have not been treated by any of this squarefree-power literature.

The $q=1$ case for wheel graphs is understood via a different route: Mousivand \cite{Mous}, generalizing Emtander, Fr\"oberg, Mohammadi and Moradi \cite{EFMM}, expressed the Betti numbers of $R/I(G\ast H)$ for \emph{any} join of graphs in terms of those of $G,H$, since the independence complex of a join is the disjoint union $\Delta_1(G\ast H)=\Delta_1(G)\sqcup\Delta_1(H)$. In \cite{RPW}, the graded Betti numbers, projective dimension and regularity of the \emph{ordinary} edge ideal  $I(\wn)$ have been computed, that is, exclusively the case $q=1$  of the wheel graph but \cite{RPW} does not address squarefree powers $q\ge2$, nor the top-power invariants that are the focus of the present paper. This raises the natural question: does an analogous structural decomposition of $\Delta_q(G\ast H)$ exist for $q\ge2$? We show that it does, and that it follows from a single, purely graph-theoretic fact about matchings in joins.

That fact is a closed formula for $\nu(G\ast H)$, for arbitrary finite simple graphs $G,H$, proved from the Tutte-Berge formula. Combination rules of this kind are familiar for cographs built up from single vertices by disjoint union and join, where they drive the linear-time algorithms of that literature, what appears to be new here is a direct, self-contained derivation for an arbitrary pair of graphs, with no decomposition tree required. Applying this formula to the definition \eqref{eq:deltaq} yields a complete membership criterion for $\Delta_q(G\ast H)$, and specializing to $G=\cn$, $H=\mathcal K_1$ recovers and sharpens the wheel-graph case analysis of \cite{RPW}. Applied instead to an arbitrary cone $\mathrm{Cone}(G)=G\ast\mathcal K_1$, the same criterion drives a short exact sequence expressing the depth of $R[v]/I(\mathrm{Cone}(G))^{[q]}$ in terms of $\depth(R/I(G)^{[q]})$ and the depth of an explicit auxiliary ideal on the smaller vertex set of $G$. For $G=\cn$ this reduces a natural depth pattern for wheels to two considerably more concrete statements about the cycle alone, and a general lower bound on the depth of a squarefree monomial ideal in terms of its generator degrees, together with an explicit facet argument, settles both of them unconditionally and the pattern $\depth(R/I(\wn)^{[q]})=2q-1$ holds for every $n\ge3$ and every $1\le q\le\nu(\wn)$, not merely as an observed pattern.

Specializing further to $\wn=\cn\ast\mathcal K_1$, two short combinatorial lemmas on extremal induced subgraphs of a cycle pin down the Krull dimension and height of $R/I(\wn)^{[q]}$ exactly, for every $n\ge3$ and $1\le q\le\lfloor n/2\rfloor$, with the same lemmas giving the identical formula for the cycle's own squarefree powers. At the top power $q=\nu(\wn)=\lceil n/2\rceil$, combining our matching-number computations with the Ficarra-Moradi criterion shows that $I(\wn)^{[\nu(\wn)]}$ is literally the squarefree Veronese ideal $\mathfrak m^{[2\nu(\wn)]}$, so that dimension, depth, regularity and projective dimension are all resolved simultaneously, combined with a general lower bound on the depth of a squarefree monomial ideal in terms of its generator degrees, this in fact proves $\depth(R/I(\wn)^{[q]})=2q-1$ for every $q$ in range, not only at the top power.

Because Theorems \ref{thm:joinmatching} and \ref{thm:gendecomp} are proved for arbitrary graphs, they apply equally to any other family expressible as a join. Section \ref{sec:apps} carries this out for fan graphs, generalized wheel graphs (a single hub replaced by $m$ mutually adjacent or mutually non-adjacent vertices), complete split graphs, and friendship graphs, obtaining complete closed-form Krull dimension formulas for the first, third and fourth families and, for the first, fourth, and the adjacent case of the second, the same top-power Cohen-Macaulay resolution proved for wheels but the non-adjacent generalized wheel graphs do not admit this resolution in general, and we do not claim it for them. We were unable to find prior treatments of squarefree powers of any of these four families in the literature, so these results appear to be new. For generalized wheel graphs, a genuine two-parameter optimization obstructs a full dimension formula, and we say so explicitly rather than force an unproved claim.

The paper is organized as follows. Section \ref{sec:prelim} recalls the Tutte-Berge formula, the squarefree-power and squarefree-Veronese constructions, and the Ficarra-Moradi criterion, and fixes notation for the wheel graph. Section \ref{sec:joinmatching} proves the join-matching theorem and its immediate corollaries, including the cone lemma and the matching number of $\wn$ itself. Section \ref{sec:decomp} derives the general membership criterion for $\Delta_q(G\ast H)$ and, specializing to cones, the depth formula of Theorem \ref{thm:conedepth}. Section \ref{sec:dim} computes the Krull dimension and height of $R/I(\wn)^{[q]}$ for the full range of $q$. Section \ref{sec:data} resolves the top squarefree power completely, reports our computational data comparing wheels and cycles, and proves the complete depth formula for every squarefree power of every wheel graph (Theorem \ref{thm:depthfull}). Section \ref{sec:apps} applies the join-matching results to fan, generalized wheel, complete split, and friendship graphs. Section \ref{sec:concl} closes with a summary of what remains open.

\section{Preliminaries}\label{sec:prelim}

Throughout, $R=\Bbbk[x_1,\ldots,x_n]$ denotes a polynomial ring over a field $\Bbbk$, and all graphs are finite and simple. We use $R$ generically for the polynomial ring on the vertex set of whichever graph is currently under discussion, where a single argument requires the rings attached to two different graphs at once, as in Section \ref{ssec:conedepth}, we introduce a separate symbol for the larger one to keep the two visually distinct. For a graph $X$ we write $\nu(X)$ for its matching number, that is, the largest number of pairwise vertex-disjoint edges in $X$. For $F\subseteq V(X)$, $X[F]$ denotes the subgraph induced on $F$, and for $S\subseteq V(X)$ we abbreviate $X-S:=X[V(X)\setminus S]$. We write $\mathcal K_m$ for the complete graph on $m$ vertices and $\overline{\mathcal K_m}$ for the edgeless graph on $m$ vertices. It will be convenient to measure how far $X$ is from having a perfect matching by the quantity
\[
d(X) := |V(X)| - 2\nu(X),
\]
which counts the vertices left uncovered by a maximum matching. We call $d(X)$ the \emph{deficiency} of $X$, it is always nonnegative, and it vanishes precisely when $X$ has a perfect matching. Given two graphs $G$ and $H$ on disjoint vertex sets, their \emph{join} $G\ast H$ is obtained from the disjoint union of $G$ and $H$ by inserting every possible edge between the two sides, so that
\[
E(G\ast H)=E(G)\cup E(H)\cup\{gh: g\in V(G),\ h\in V(H)\}.
\]

The single matching-theoretic tool we need is the classical formula of Tutte and Berge \cite{Tutte47,Berge58}, which expresses the matching number of an arbitrary graph as the value of an extremal problem over vertex subsets.

\begin{theorem}[\cite{Tutte47,Berge58}]\label{thm:tb}
	For any finite simple graph $X$,
	\[
	\nu(X) = \frac{1}{2}\Big(|V(X)| - \max_{S\subseteq V(X)}\big(o(X-S)-|S|\big)\Big),
	\]
	where $o(Y)$ denotes the number of connected components of $Y$ having an odd number of vertices.
\end{theorem}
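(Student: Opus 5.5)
The plan is to prove the two inequalities separately. Write $\mathrm{def}(X):=\max_{S\subseteq V(X)}\big(o(X-S)-|S|\big)$; taking $S=\emptyset$ shows $\mathrm{def}(X)\ge 0$. The claim is equivalent to $d(X)=\mathrm{def}(X)$, where $d(X)=|V(X)|-2\nu(X)$ is the deficiency introduced above.

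\emph{Easy direction, $d(X)\ge\mathrm{def}(X)$.} Fix a maximum matching $M$ and any $S\subseteq V(X)$. Every odd component $C$ of $X-S$ has an odd number of vertices, and edges of $M$ with both ends in $C$ cover an even number of them. So $C$ contains a vertex that is either unmatched by $M$ or matched to a vertex of $S$, since there are no edges from $C$ to other components of $X-S$. Each vertex of $S$ absorbs at most one such component. Hence at least $o(X-S)-|S|$ vertices are uncovered by $M$, which gives $d(X)\ge o(X-S)-|S|$ for every $S$.

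\emph{Hard direction, $d(X)\le\mathrm{def}(X)$.} I would reduce to Tutte's $1$-factor theorem: a graph $Y$ has a perfect matching if and only if $o(Y-S)\le|S|$ for all $S$. Let $k=\mathrm{def}(X)$. We have $k\equiv|V(X)|\pmod 2$, because $o(X-S)\equiv|V(X)|-|S|\pmod 2$. Form $Y=X\ast\mathcal K_k$ by adding $k$ new vertices adjacent to everything, so $|V(Y)|$ is even. Check Tutte's condition for $Y$:
\begin{itemize}
\item If $S$ misses some new vertex, then $Y-S$ is connected, so $o(Y-S)\le 1\le|S|$ whenever $S\neq\emptyset$. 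For $S=\emptyset$ we have $o(Y)=0$ by parity.
\item If $S$ contains all new vertices, write $S=S'\cup\{\text{new vertices}\}$. Then $o(Y-S)=o(X-S')\le|S'|+k=|S|$.
\end{itemize}
So $Y$ has a perfect matching. Deleting its at most $k$ edges incident to the new vertices leaves a matching of $X$ missing at most $k$ vertices, hence $d(X)\le k$.

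The main obstacle is then Tutte's theorem itself; necessity is the argument of the easy direction. For sufficiency I would use the standard edge-maximal counterexample argument.
\begin{enumerate}
\item Suppose $Y$ satisfies Tutte's condition, has $|V(Y)|$ even, and has no perfect matching. Adding edges preserves the condition, so we may take $Y$ edge-maximal with no perfect matching.
\item Let $U$ be the set of vertices adjacent to all others. If every component of $Y-U$ is complete, a perfect matching is built directly. Match pairs inside each component, leaving one vertex from each odd component. Match these leftovers to distinct vertices of $U$, which is possible since $o(Y-U)\le|U|$. Pair up the remaining vertices of $U$; the parity of $|V(Y)|$ makes their number even.
\item Otherwise some component of $Y-U$ is not complete. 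It contains vertices $a,b,c$ with $ab,bc\in E$ and $ac\notin E$. Since $b\notin U$, there is a vertex $d$ with $bd\notin E$.
\item By maximality, $Y+ac$ and $Y+bd$ have perfect matchings $M_1\ni ac$ and $M_2\ni bd$.
\item Analyze the alternating cycle of $M_1\triangle M_2$ through $bd$. Rerouting along it, using the edges $ab$ or $bc$, produces a perfect matching of $Y$ avoiding both $ac$ and $bd$, which is a contradiction.
\end{enumerate}
The delicate point is this case check on the alternating cycle.

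Alternatively, one could cite Tutte's theorem as classical, since the statement is quoted from \cite{Tutte47,Berge58}, and present only the reduction above.
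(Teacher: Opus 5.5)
The paper gives no proof of this statement. Theorem~\ref{thm:tb} is quoted from Tutte and Berge and used as a black box, in the deficiency form $d(X)=\max_S(o(X-S)-|S|)$, inside the proof of Theorem~\ref{thm:joinmatching}. Your proposal is correct and takes a different, self-contained route: it derives the formula from Tutte's $1$-factor theorem and also sketches a proof of that theorem.

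Checking the pieces in turn:
\begin{itemize}
\item The counting argument for $d(X)\ge o(X-S)-|S|$ is right.
\item The parity observation $k\equiv|V(X)|\pmod 2$ is exactly what gives $X\ast\mathcal K_k$ even order.
\item Your two-case check of Tutte's condition is sound. For $k=0$ the first case is vacuous and the second is just the definition of $k$.
\item The edge-maximal argument is the standard textbook proof of Tutte's theorem.
\end{itemize}

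Step~5 is only sketched. To close it, let $C$ be the cycle of $M_1\triangle M_2$ through $bd$.
\begin{itemize}
\item If $C$ avoids $ac$, take $M_1$ on $C$ and $M_2$ off $C$.
\item Otherwise, walk along $C$ from $b$ through $d$ until you first meet $\{a,c\}$, say at $c$. You necessarily arrive by an $M_2$-edge, since the $M_1$-edge at $c$ is $ac$. Close this walk with $cb$ and use its $M_1$-edges.
\item On the rest of $C$, an arc from $a$ to the $M_1$-partner of $b$ that begins and ends with $M_2$-edges, use the $M_2$-edges. Use $M_2$ off $C$.
\end{itemize}

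Your verification on $X\ast\mathcal K_k$ rests on the same fact that drives the case split in the proof of Theorem~\ref{thm:joinmatching}. Deleting a set from a join leaves a connected graph unless the set swallows a whole side. You could not shortcut by citing Theorem~\ref{thm:joinmatching} with $H=\mathcal K_k$, because its proof already invokes Tutte--Berge for $G$. Checking Tutte's condition directly, as you do, avoids that circularity.

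The citation keeps the paper short. Your version shows that the deficiency formula the paper relies on follows from Tutte's $1$-factor theorem, via the same connectivity property of joins that the paper exploits afterwards.
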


In particular, the deficiency satisfies $d(X)=\max_{S\subseteq V(X)}\big(o(X-S)-|S|\big)$, and this is the form in which we shall actually use the theorem.

We next recall the central object of the paper, introduced by Erey, Herzog, Hibi and Saeedi Madani.

\begin{definition}[\cite{EHHM}]\label{def:sqfp}
	Let $G$ be a graph on the vertex set $\{x_1,\ldots,x_n\}$ and let $q\ge1$. The $q$-th \emph{squarefree power} of the edge ideal of $G$ is
	\[
	I(G)^{[q]} = \big(\, x_{V(M)} : M \text{ a } q\text{-matching of } G \,\big)\subseteq R, \qquad\text{where } x_{V(M)}=\prod_{u\in V(M)}x_u.
	\]
\end{definition}

Two elementary observations about this generating set deserve to be made explicit before we go further. First, distinct $q$-matchings may produce the same monomial, when this happens, that monomial simply appears once in the generating set. Second, the generating set is minimal. Indeed, every $q$-matching covers exactly $2q$ vertices, so every generator $x_{V(M)}$ has the same degree $2q$, on the other hand, one squarefree monomial can properly divide another only when its degree is strictly smaller. Since no two generators differ in degree, none can divide another, and this is exactly what minimality means for monomial ideals. It also follows at once that $I(G)^{[q]}$ is the Stanley-Reisner ideal of the complex $\Delta_q(G)$ of \eqref{eq:deltaq}, in symbols, $I(G)^{[q]}=I_{\Delta_q(G)}$.

All homological information about $R/I(G)^{[q]}$ is therefore encoded in the topology of the induced subcomplexes of $\Delta_q(G)$. Concretely, Hochster's formula \cite{hoch1} gives, for every graph $G$ and every $q\ge1$,
\begin{equation}\label{eq:hochq}
	\beta_{i,j}\big(R/I(G)^{[q]}\big) = \sum_{W\subseteq V(G),\, |W|=j} \dim_\Bbbk \widetilde H_{j-i-1}\big(\Delta_q(G)[W];\Bbbk\big).
\end{equation}
From the graded Betti numbers one obtains other invariants like the Castelnuovo-Mumford regularity, $\reg(R/I)=\max\{j-i:\beta_{i,j}\ne0\}$ and the projective dimension, $\pd(R/I)=\max\{i:\beta_{i,j}\ne0\text{ for some }j\}$, while the Auslander-Buchsbaum formula converts the latter into the depth, namely $\depth(R/I)=|V(G)|-\pd(R/I)$. On the combinatorial side, for a squarefree monomial ideal $I=I_\Delta$ the Krull dimension of the quotient is governed by the largest face, $\dimn(R/I)=\max\{|F|:F\in\Delta\}$, with $\height(I)=|V(G)|-\dimn(R/I)$ accordingly. The ring $R/I$ is Cohen-Macaulay when the two invariants $\depth(R/I)$ and $\dimn(R/I)$ coincide.

One family of squarefree ideals will play a special role at the top power. For $1\le k\le n$, the \emph{squarefree Veronese ideal} $\mathfrak m^{[k]}\subseteq \Bbbk[x_1,\ldots,x_n]$ is the ideal generated by all squarefree monomials of degree $k$, equivalently, it is the Stanley-Reisner ideal of the $(k-2)$-skeleton of the full simplex on $n$ vertices. Its homological behaviour is completely understood.

\begin{theorem}[Herzog-Hibi, {\cite[Theorem 4.2]{HH}}]\label{thm:veronese}
	$\Bbbk[x_1,\ldots,x_n]/\mathfrak m^{[k]}$ is Cohen-Macaulay, and $\dimn = \depth = \reg = k-1$.
\end{theorem}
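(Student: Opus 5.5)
The plan is to prove the three assertions of Theorem \ref{thm:veronese} in turn: the dimension, then Cohen-Macaulayness, then the regularity. The starting point is the identification $\mathfrak m^{[k]}=I_\Delta$, where $\Delta$ is the $(k-2)$-skeleton of the full simplex on $[n]$. A squarefree monomial $x_F$ lies outside $\mathfrak m^{[k]}$ exactly when $|F|\le k-1$, so the faces of $\Delta$ are the sets of size at most $k-1$. Hence every facet has $k-1$ elements, and $\dimn(R/\mathfrak m^{[k]})=k-1$. This is immediate from the description of the Krull dimension by the largest face recalled above.

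For Cohen-Macaulayness I will use Reisner's criterion: for every face $F$ and every $i<\dim\operatorname{lk}F$, the reduced homology $\widetilde H_i(\operatorname{lk}_\Delta F;\Bbbk)$ vanishes. The link of a face $F$ with $|F|=j$ in the $(k-2)$-skeleton of the simplex on $[n]$ is the $(k-2-j)$-skeleton of the simplex on $[n]\setminus F$. Reduced homology of a skeleton of a simplex agrees with that of the simplex below the top dimension, and the simplex is acyclic. So all homology of the link below its top dimension vanishes, and $\Delta$ is Cohen-Macaulay. It follows that $\depth=\dimn=k-1$.

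For the regularity I will use Hochster's formula \eqref{eq:hochq}. For $W\subseteq[n]$, the induced subcomplex $\Delta[W]$ is the $(k-2)$-skeleton of the simplex on $W$. Its only possible nonzero reduced homology is in degree $k-2$, and this happens exactly when $|W|\ge k$. A nonzero $\beta_{i,j}$ therefore needs $j-i-1=k-2$, so $j-i=k-1$. Thus every nonzero Betti number in homological degree $i\geq 1$ lies in the single strand $j-i=k-1$, and the resolution is linear (only $\beta_{0,0}$ sits off this strand).

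To finish, I have to check that this strand is actually nonempty, which needs $k\le n$. Taking $W=[n]$, the Euler characteristic of the $(k-2)$-skeleton differs from that of the acyclic simplex, so its top homology is nonzero. This gives $\beta_{n-k+1,n}\neq0$, and hence $\reg=k-1$.

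The only step needing care is the link and skeleton homology claim. I will verify it via the cellular chain complex: the chain complex of the skeleton coincides with that of the simplex in degrees at most $k-2$, except that in degree $k-2$ the cycles are no longer quotiented by boundaries. Everything else is bookkeeping.
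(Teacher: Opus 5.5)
Your proof is correct, but it takes a different route from the paper, which gives no argument of its own. The paper cites Herzog--Hibi for Cohen--Macaulayness and invokes the known fact that squarefree Veronese ideals have linear resolutions to get the regularity. Only the dimension is read off directly, from the facets of the $(k-2)$-skeleton, exactly as you do.

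You instead derive everything from the Stanley--Reisner picture:
\begin{itemize}
\item Reisner's criterion, applied to the links (each a lower skeleton of the simplex on $[n]\setminus F$), gives Cohen--Macaulayness.
\item Hochster's formula, applied to the induced subcomplexes (each a skeleton of the simplex on $W$), gives the single linear strand $j-i=k-1$.
\end{itemize}
Your route is self-contained, and one fact drives both depth and regularity: a skeleton of a simplex is acyclic below its top dimension. The paper's route buys brevity by outsourcing all of the content.

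Two small remarks.
\begin{itemize}
\item Hochster's formula as displayed in \eqref{eq:hochq} is stated only for $\Delta_q(G)$. You are really using the general version for an arbitrary complex, which you should say.
\item Your Euler-characteristic step for the nonvanishing of the strand is more than you need. Once $k\le n$, $\beta_{1,k}(R/\mathfrak m^{[k]})=\binom nk\neq0$ already lies on the strand. If you keep that step, justify that the difference is nonzero, for instance via $\sum_{m=0}^{k-1}(-1)^m\binom nm=(-1)^{k-1}\binom{n-1}{k-1}$. Alternatively, note that the top homology of the skeleton is the cycle group $Z_{k-2}$, which equals $\partial_{k-1}(C_{k-1})$ of the acyclic full simplex and is nonzero because a $(k-1)$-face exists.
\end{itemize}
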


Indeed, squarefree Veronese ideals have linear resolutions, which accounts for the regularity, while the dimension is immediate from the fact that every facet of the $(k-2)$-skeleton of a simplex has exactly $k-1$ vertices.

The bridge between matching theory and Cohen-Macaulayness at the top power $q=\nu(G)$ is a recent criterion of Ficarra and Moradi, which we shall apply repeatedly in this paper.

\begin{theorem}[Ficarra-Moradi, {\cite[Theorem 1.8]{FM}}]\label{thm:fm}
	Let $G$ be a graph on $n$ non-isolated vertices. The following are equivalent:
	\begin{enumerate}
		\item[(a)] $I(G)^{[\nu(G)]}$ is Cohen-Macaulay;
		\item[(b)] $I(G)^{[\nu(G)]}=\mathfrak m^{[2\nu(G)]}$ and $\nu(G)=\lfloor n/2\rfloor$;
		\item[(c)] either $G$ has a perfect matching, or $G\setminus\{i\}$ has a perfect matching for every $i\in V(G)$.
	\end{enumerate}
\end{theorem}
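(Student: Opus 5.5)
The plan is to prove the cycle (b)$\Rightarrow$(a)$\Rightarrow$(b)$\Leftrightarrow$(c), writing $\nu=\nu(G)$ and $I=I(G)^{[\nu]}$ throughout.

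\textbf{(b)$\Rightarrow$(a).} This is immediate from Theorem~\ref{thm:veronese}.

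\textbf{(c)$\Rightarrow$(b).} Suppose first that $G$ has a perfect matching. Then $n=2\nu$, and the only set of $2\nu$ vertices is $V(G)$ itself. Hence $I=(x_1\cdots x_n)=\mathfrak m^{[n]}$, and $\nu=\lfloor n/2\rfloor$.

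Suppose instead that $G\setminus\{i\}$ has a perfect matching for every $i$. Then $n$ is odd, and $\nu=(n-1)/2=\lfloor n/2\rfloor$. Moreover, every $(n-1)$-subset of $V(G)$ supports a $\nu$-matching. So every squarefree monomial of degree $2\nu=n-1$ is a generator, and $I=\mathfrak m^{[2\nu]}$.

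\textbf{(b)$\Rightarrow$(c).} Assume $I=\mathfrak m^{[2\nu]}$. This says every $2\nu$-subset of $V(G)$ induces a graph with a perfect matching. I claim $n\le 2\nu+1$, which follows from a direct exchange argument.

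Suppose instead $n\ge 2\nu+2$. Fix a maximum matching $M$ and two vertices $w,w'$ not covered by $M$. Pick an edge $ab\in M$, which is possible since $\nu\ge1$ because $G$ has no isolated vertices. The $2\nu$-set $(V(M)\setminus\{a,b\})\cup\{w,w'\}$ would carry a perfect matching. Adding the edge $ab$ to it gives $\nu+1$ disjoint edges, a contradiction.

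So $n\in\{2\nu,2\nu+1\}$. If $n=2\nu$, $G$ has a perfect matching. If $n=2\nu+1$, every $G\setminus\{i\}$ is a $2\nu$-set, hence has a perfect matching. This also gives $\nu=\lfloor n/2\rfloor$, so (c) follows, and in fact the second condition in (b) is automatic.

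\textbf{(a)$\Rightarrow$(b).} This is the main step. I would argue through Hilbert series rather than combinatorics.

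By Bigdeli--Herzog--Zaare-Nahandi, $\reg(I)=2\nu$. Since $I$ is generated in the single degree $k:=2\nu$, this means $I$ has a $k$-linear resolution.

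Now assume $R/I$ is Cohen--Macaulay of dimension $D$ and codimension $p=n-D$. Pass to an Artinian reduction modulo a regular sequence of $D$ linear forms, extending the field if necessary. The result is $\bar R/\bar I$, where $\bar R$ is a polynomial ring in $p$ variables and $\bar I$ still has a $k$-linear resolution.

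An ideal generated in degree $k$ with $\bar R/\bar I$ Artinian and $k$-linear must be $\bar{\mathfrak m}^{k}$. I expect this to follow because the socle sits in degree $k-1$, where the last Betti number is concentrated. Consequently the $h$-vector of $\Delta_\nu(G)$ is
\[
h_i=\binom{p+i-1}{i}\quad (0\le i<k),\qquad h_i=0\quad (i\ge k).
\]
This is exactly the $h$-vector of the $(k-2)$-skeleton of the simplex on $n$ vertices, provided $p=n-k+1$.

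Since $I$ has no generators of degree $<k$, every set of size $<k$ lies in $\Delta_\nu(G)$. Therefore $f_{i-1}=\binom ni$ for $i<k$. Comparing $f$- and $h$-vectors should force $D=k-1$. It follows that there are no faces of size $k$, i.e.\ $I=\mathfrak m^{[k]}$.

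The delicate point is pinning down $D$ from the $h$-vector. I would obtain it from $h_{k-1}$: it equals $\binom{p+k-2}{k-1}$, and also equals the corresponding value computed from $f_{k-2}=\binom n{k-1}$. These match only when $p+k-1=n$. With $I=\mathfrak m^{[2\nu]}$ established, the argument for (b)$\Rightarrow$(c) above shows $n\le 2\nu+1$, hence $\nu=\lfloor n/2\rfloor$, which completes (b).
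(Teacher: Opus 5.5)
The paper does not prove this theorem; it quotes it from \cite{FM}, so your proposal has to stand on its own. Your implications (b)$\Rightarrow$(a) and (c)$\Rightarrow$(b) are correct. Your exchange argument for (b)$\Rightarrow$(c) is also correct, and it shows that the clause $\nu(G)=\lfloor n/2\rfloor$ in (b) is redundant.

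The gap is in (a)$\Rightarrow$(b), at exactly the step you flag as delicate. Comparing $h_{k-1}$ with the value computed from $f_{k-2}=\binom n{k-1}$ carries no information about $D$. You have $f_{i-1}=\binom ni$ for all $i<k$, and
\[
\sum_{i=0}^{n}\binom ni\,t^i(1-t)^{D-i}=(1-t)^{D}\Big(1+\tfrac{t}{1-t}\Big)^{n}=(1-t)^{-(n-D)} .
\]
Hence the $f$-to-$h$ transform gives $h_j=\binom{n-D+j-1}{j}=\binom{p+j-1}{j}$ for every $j<k$, identically in $D\ge k-1$. The two expressions for $h_{k-1}$ therefore always agree. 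You get ``$p+k-1=n$'' only by evaluating the transform with $D=k-1$ already inserted, which is circular. The remaining data, $h_j=0$ for $j\ge k$, merely determines $f_{k-1},f_k,\dots$ and is compatible with $D>k-1$.

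In fact no argument using only what you invoke can work: squarefree, generated in one degree $k$, $k$-linear resolution, and Cohen--Macaulay are not enough. Take $G=\mathcal K_2\sqcup\mathcal K_1$. Then $I(G)^{[\nu(G)]}=(x_1x_2)\subset\Bbbk[x_1,x_2,x_3]$ is Cohen--Macaulay with a $2$-linear resolution and $D=2$, yet it is not $\mathfrak m^{[2]}$. Your proof of (a)$\Rightarrow$(b) never uses that $G$ has no isolated vertices, so it would prove a false statement.

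The missing ingredient is structural. The sets $V(M)$, for $M$ a maximum matching, are exactly the bases of the matching matroid of $G$. So $I(G)^{[\nu]}$ is a matroidal ideal, and it involves every variable precisely because $G$ has no isolated vertices. The Herzog--Hibi classification of Cohen--Macaulay polymatroidal ideals (principal, Veronese, or squarefree Veronese) \cite{HH} then gives (b).

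If you prefer to stay within your homological set-up, argue as follows. By Alexander duality, $\widetilde H_{2\nu-2}(\Delta_\nu(G);\Bbbk)$ is the top homology of the independence complex of the dual matching matroid. This is nonzero because that matroid has no coloops; a coloop would be an isolated vertex of $G$. Hochster's formula with $W=V(G)$ then gives $\beta_{n-2\nu+1,n}(R/I)\neq0$, so $\depth R/I\le 2\nu-1$. Cohen--Macaulayness forces $\dimn R/I=2\nu-1$, so there is no face of size $2\nu$, and hence $I=\mathfrak m^{[2\nu]}$.
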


The point of the equivalence of (a) and (c) is that a homological property is decided by an elementary matching condition that can be checked vertex by vertex and the equivalence with (b) then identifies the ideal as a squarefree Veronese ideal, to which Theorem \ref{thm:veronese} applies.

Finally, we fix notation for the graph that occupies the centre of this paper.

\begin{definition}\label{def:wheel}
	For $n\ge3$, the \emph{wheel graph} is the cone $\wn=\cn\ast\mathcal K_1$ over the cycle. Its vertex set is $\{x_1,\ldots,x_n,v\}$, where the cycle $\cn$ lives on $U=\{x_1,\ldots,x_n\}$ and the additional vertex $v$ is adjacent to every vertex of $U$.
\end{definition}

\section{A general join-matching theorem}\label{sec:joinmatching}

Consider any matching in the join $G\ast H$. The edges lying entirely inside $G$ form a matching of $G$, so there are at most $\nu(G)$ of them. Every remaining edge has at least one endpoint in $H$, and no two of them can share such an endpoint, so there are at most $n_H:=|V(H)|$ of these. Altogether $\nu(G\ast H)\le\nu(G)+n_H$, and by symmetry $\nu(G\ast H)\le\nu(H)+n_G$ with $n_G:=|V(G)|$. Of course, no graph on $n_G+n_H$ vertices has a matching larger than $\lfloor(n_G+n_H)/2\rfloor$ either. Our first theorem shows that the smallest of these three elementary bounds is always attained.

\begin{theorem}\label{thm:joinmatching}
	Let $G,H$ be finite simple graphs on disjoint vertex sets, $n_G=|V(G)|$, $n_H=|V(H)|$. Then
	\[
	\nu(G\ast H) = \min\Big(\nu(G)+n_H,\ \ \nu(H)+n_G,\ \ \big\lfloor\tfrac{n_G+n_H}2\big\rfloor\Big).
	\]
\end{theorem}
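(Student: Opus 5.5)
The upper bound $\nu(G\ast H)\le\min(\cdot)$ was established in the discussion preceding the statement, so only the reverse inequality needs proof. I will phrase it through deficiencies and use Theorem \ref{thm:tb} in the form $d(X)=\max_S\big(o(X-S)-|S|\big)$. Set $X=G\ast H$ and $N=n_G+n_H$. It suffices to show that for every $S\subseteq V(X)$ we have
\[
o(X-S)-|S|\le \max\big(N-2\nu(G)-2n_H,\ N-2\nu(H)-2n_G,\ N-2\lfloor N/2\rfloor\big).
\]
Indeed, $2\min(a,b,c)=N-\max(N-2a,N-2b,N-2c)$, so this bounds $d(X)$ and hence gives $\nu(X)\ge\min(\cdot)$.

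Fix $S$ and write $S_G=S\cap V(G)$ and $S_H=S\cap V(H)$. I will split into cases according to which sides survive in $X-S$.

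\emph{Both sides meet $X-S$.} Then every remaining vertex of $G$ is adjacent to every remaining vertex of $H$, so $X-S$ is connected. Hence $o(X-S)-|S|\le 1-|S|$. This is at most $0$ unless $S=\emptyset$, and when $S=\emptyset$ it equals $1$ exactly when $N$ is odd. So the quantity is at most $N-2\lfloor N/2\rfloor$.

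\emph{$V(H)\subseteq S$.} Then $X-S=G-S_G$, and
\[
o(X-S)-|S| = o(G-S_G)-|S_G|-n_H\le d(G)-n_H=N-2\nu(G)-2n_H+\bigl(-n_H+n_H\bigr),
\]
using $d(G)=n_G-2\nu(G)$. Rewriting, $d(G)-n_H=(n_G-2\nu(G))-n_H=N-2\nu(G)-2n_H$, which is exactly the first term. Here $o(G-S_G)-|S_G|\le d(G)$ is Tutte–Berge applied to $G$.

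\emph{$V(G)\subseteq S$.} This is symmetric to the previous case and gives the second term. The case where both sides lie in $S$ is contained in these two.

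The main obstacle is only bookkeeping: the connectivity of $X-S$ when both sides survive, and keeping track of the parity term. The arithmetic identity relating the maximum of deficiencies to the minimum of the three bounds must be stated carefully. No construction of explicit matchings is needed, because Theorem \ref{thm:tb} handles the existence direction.
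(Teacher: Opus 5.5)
Your proposal is correct and follows essentially the same route as the paper: Tutte--Berge applied to $G\ast H$, with the same three-way split of $S$ (both sides survive, so $X-S$ is connected and contributes at most the parity term; or $V(G)\subseteq S$; or $V(H)\subseteq S$). The differences are minor. You only bound $d(G\ast H)$ from above and take the other direction from the elementary upper bound on $\nu(G\ast H)$, whereas the paper computes $d(G\ast H)$ exactly. Your case split also covers the degenerate case of an empty side, which the paper treats separately.
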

\begin{proof}
	Write $X=G\ast H$. By Theorem \ref{thm:tb} it is enough to compute the deficiency
	\[
	d(X)=\max_{S\subseteq V(X)}\big(o(X-S)-|S|\big).
	\]
	If $G$ has no vertices, then $X=G\ast H=H$, and since $\nu(H)\le\lfloor n_H/2\rfloor$ and $\nu(H)\le n_H$ always, the right-hand side of the claimed formula reduces to $\min(n_H,\,\nu(H),\,\lfloor n_H/2\rfloor)=\nu(H)=\nu(X)$, so the formula holds immediately, similarly the formula holds if $H$ has no vertices. Assume from now on that $G$ and $H$ are both non-empty.

	Every subset $S\subseteq V(X)$ can be written uniquely as $S=S_G\cup S_H$, where $S_G=S\cap V(G)$ and $S_H=S\cap V(H)$. We divide the argument into three cases according to whether $S_G=V(G)$ or $S_H=V(H)$. Then the following three cases exhaust all $S\subseteq V(G)$:

	\emph{Case 1: $S_G\ne V(G)$ and $S_H\ne V(H)$.} Here $X-S=(G-S_G)\ast(H-S_H)$ is a join of two non-empty graphs. Any such join is connected, because every surviving vertex of $G$ is adjacent to every surviving vertex of $H$. Thus $X-S$ consists of a single component, odd or even, and therefore
	\[
	o(X-S)-|S|\le 1-|S|.
	\]
	The empty set achieves the value $\varepsilon:=(n_G+n_H)\bmod 2$, which equals $1$ precisely when $X$ has an odd number of vertices. Any non-empty $S$ achieves at most $1-|S|\le0\le\varepsilon$ and can never do better. The maximum over this case is therefore exactly $\varepsilon$, attained at $S=\emptyset$.

	\emph{Case 2: $S_G=V(G)$.} All of $G$ has been deleted, so every cross edge disappears with it and what remains is simply $H-S_H$. Consequently
	\[
	o(X-S)-|S| = o(H-S_H)-|S_H|-n_G.
	\]
	As $S_H$ ranges over all subsets of $V(H)$, the quantity $o(H-S_H)-|S_H|$ attains the maximum $d(H)$, by Theorem \ref{thm:tb} applied to $H$ itself. The maximum over this case is therefore $d(H)-n_G$.

	\emph{Case 3: $S_H=V(H)$.} Exchanging the roles of $G$ and $H$ in the previous case gives the maximum $d(G)-n_H$.

	Assembling the three cases,
	\[
	d(X)=\max\big(\varepsilon,\ d(H)-n_G,\ d(G)-n_H\big).
	\]
	Therefore, by the Tutte-Berge formula,
	\[
	\nu(X) = \frac{n_G+n_H-d(X)}2 = \min\Big(\tfrac{n_G+n_H-\varepsilon}2,\ \tfrac{n_G+n_H-d(H)+n_G}2,\ \tfrac{n_G+n_H-d(G)+n_H}2\Big).
	\]
	It remains to simplify the three terms. The first term  
	\(
	\frac{n_G+n_H-\varepsilon}2=\Big\lfloor\frac{n_G+n_H}2\Big\rfloor
	\), since \(\varepsilon\) records the parity of \((n_G+n_H)/2\).\\
	In the second, the definition of deficiency gives $n_H-d(H)=2\nu(H)$, so
	\[
	\frac{n_G+n_H-d(H)+n_G}2=n_G+\frac{n_H-d(H)}2=n_G+\nu(H).
	\]
	In the third, $n_G-d(G)=2\nu(G)$ likewise, so
	\[
	\frac{n_G+n_H-d(G)+n_H}2=n_H+\frac{n_G-d(G)}2=n_H+\nu(G).
	\]
	This is the claimed formula.
\end{proof}

The most frequently used special case is that of a cone, where a single new vertex is joined to an entire graph. This case was used implicitly in \cite{RPW}, and we record it explicitly here, since it will be invoked many times in what follows.

\begin{corollary}\label{cor:cone}
	For any graph $H$, $\nu(H\ast\mathcal K_1)=\nu(H)$ if $H$ has a perfect matching, and $\nu(H)+1$ otherwise.
\end{corollary}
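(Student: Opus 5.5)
The plan is to specialize Theorem \ref{thm:joinmatching} with $G=H$ (the graph in the corollary) and the second factor $\mathcal K_1$, so $n_G=|V(H)|=:m$, $n_{\mathcal K_1}=1$ and $\nu(\mathcal K_1)=0$. The theorem then gives
\[
\nu(H\ast\mathcal K_1)=\min\Big(\nu(H)+1,\ m,\ \big\lfloor\tfrac{m+1}{2}\big\rfloor\Big).
\]
The remaining work is to show that this minimum equals $\nu(H)$ exactly when $H$ has a perfect matching, and equals $\nu(H)+1$ otherwise.

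First I drop the middle term. For $m\ge1$ we have $\lfloor (m+1)/2\rfloor\le m$, so the middle term is never the strict minimum. If $m=0$, then $H$ is empty and trivially has a perfect matching, and $\nu(\mathcal K_1)=0=\nu(H)$, which agrees with the claim. So from now on I assume $m\ge1$, and the formula becomes $\min(\nu(H)+1,\lfloor (m+1)/2\rfloor)$.

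Next I split according to whether $H$ has a perfect matching, using the deficiency $d(H)=m-2\nu(H)$.
\begin{itemize}
\item If $H$ has a perfect matching, then $m=2\nu(H)$ and $\lfloor (m+1)/2\rfloor=\nu(H)<\nu(H)+1$, so the minimum is $\nu(H)$.
\item If $H$ has no perfect matching, then $d(H)\ge1$, so $m\ge 2\nu(H)+1$ and $\lfloor (m+1)/2\rfloor\ge \nu(H)+1$. The minimum is therefore $\nu(H)+1$.
\end{itemize}

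There is no real obstacle here. The only care needed is the degenerate case $m=0$ and checking the floor arithmetic in both parities of $m$; both are routine once the theorem is specialized.
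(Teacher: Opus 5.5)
Your proposal is correct and is essentially the paper's proof: both specialize Theorem \ref{thm:joinmatching} to a one-vertex factor to get $\min\bigl(\nu(H)+1,\ |V(H)|,\ \lfloor(|V(H)|+1)/2\rfloor\bigr)$, handle the empty graph separately, and then compare the terms using the deficiency. The only difference is cosmetic: you discard the middle term once at the outset via $\lfloor(m+1)/2\rfloor\le m$ for $m\ge1$, whereas the paper checks it separately in each case.
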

\begin{proof}
	Take $G=\mathcal K_1$ in Theorem \ref{thm:joinmatching}, so that $n_G=1$ and $\nu(G)=0$. The theorem gives $\nu(H\ast\mathcal K_1)=\min\big(n_H,\ \nu(H)+1,\ \lfloor(n_H+1)/2\rfloor\big)$.

	Suppose first that $H$ has a perfect matching, so $d(H)=0$ and $n_H=2\nu(H)$. If $\nu(H)=0$, then $H$ has no vertices at all, the cone $H\ast\mathcal K_1$ is a single vertex, and both sides of the claimed identity vanish, and there is nothing to prove. Assume therefore that $\nu(H)\ge1$. Then $\lfloor(n_H+1)/2\rfloor=\nu(H)$, and this is strictly smaller than $\nu(H)+1$ and at most $n_H$, so the minimum equals $\nu(H)$, as claimed.

	Suppose instead that $H$ has no perfect matching, so $d(H)\ge1$ and $n_H=2\nu(H)+d(H)\ge2\nu(H)+1$. From this inequality, $\lfloor(n_H+1)/2\rfloor\ge\nu(H)+1$ and also $n_H\ge\nu(H)+1$, since $\nu(H)\ge0$. Both remaining terms of the minimum are therefore at least $\nu(H)+1$. Since $\nu(H)+1$ is itself one of the three terms, the minimum is exactly $\nu(H)+1$.
\end{proof}

\begin{corollary}\label{cor:km}
	For any graph $G$ and $m\ge1$:
	\[
	\nu(G\ast\mathcal K_m) = \min\Big(\nu(G)+m,\ \big\lfloor\tfrac m2\big\rfloor+n_G,\ \big\lfloor\tfrac{n_G+m}2\big\rfloor\Big), \qquad
	\nu(G\ast\overline{\mathcal K_m}) = \min\Big(n_G,\ \nu(G)+m,\ \big\lfloor\tfrac{n_G+m}2\big\rfloor\Big),
	\]
	where $\overline{\mathcal K_m}$ is the edgeless graph on $m$ vertices.
\end{corollary}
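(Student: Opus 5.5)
This is a direct specialization of Theorem \ref{thm:joinmatching}. The plan is to substitute $H=\mathcal K_m$, respectively $H=\overline{\mathcal K_m}$, with $n_H=m$, and evaluate $\nu(H)$ in each case.

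\emph{Complete case.} For $\mathcal K_m$ we use $\nu(\mathcal K_m)=\lfloor m/2\rfloor$. A maximum matching pairs up all vertices except possibly one, and no graph on $m$ vertices can do better. Substituting $\nu(H)=\lfloor m/2\rfloor$ and $n_H=m$ into Theorem \ref{thm:joinmatching} gives exactly the first displayed formula: the terms are $\nu(G)+m$, $\lfloor m/2\rfloor+n_G$, and $\lfloor (n_G+m)/2\rfloor$.

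\emph{Edgeless case.} For $\overline{\mathcal K_m}$ we have $\nu(\overline{\mathcal K_m})=0$. The term $\nu(H)+n_G$ therefore becomes $n_G$, while the other two terms are $\nu(G)+m$ and $\lfloor (n_G+m)/2\rfloor$, as claimed.

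The hypothesis $m\ge1$ ensures $H$ is non-empty, although Theorem \ref{thm:joinmatching} already covers empty graphs. No step is a real obstacle. The only points needing a sentence of justification are the two values of $\nu(H)$, and both follow immediately from the trivial upper bound $\nu\le\lfloor m/2\rfloor$ together with an explicit matching (or the absence of edges).
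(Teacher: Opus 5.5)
Your proposal is correct and matches the paper's proof: it substitutes $H=\mathcal K_m$ with $\nu(H)=\lfloor m/2\rfloor$, and then $H=\overline{\mathcal K_m}$ with $\nu(H)=0$, into Theorem \ref{thm:joinmatching} with $n_H=m$. Your short justifications of these two matching numbers are the only details the paper leaves implicit.
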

\begin{proof}
First substitute $H=\mathcal K_m$ (so $\nu(H)=\lfloor m/2\rfloor$) and then $H=\overline{\mathcal K_m}$ (so $\nu(H)=0$) into Theorem \ref{thm:joinmatching}.
\end{proof}

\begin{corollary}\label{cor:nuwn}
	$\nu(\wn) = \lceil n/2\rceil$.
\end{corollary}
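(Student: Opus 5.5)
The plan is to apply Corollary \ref{cor:cone} with $H=\cn$, since $\wn=\cn\ast\mathcal K_1$ by Definition \ref{def:wheel}. This reduces everything to the matching number of the cycle and to whether $\cn$ has a perfect matching.

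First I will record that $\nu(\cn)=\lfloor n/2\rfloor$ for $n\ge3$. The upper bound is the trivial bound $\lfloor n/2\rfloor$. For the lower bound, take the edges $x_1x_2,\,x_3x_4,\,\ldots,\,x_{2k-1}x_{2k}$ with $k=\lfloor n/2\rfloor$. These are edges of the cycle, and they are pairwise disjoint.

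Then I split by parity.

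\emph{If $n$ is even,} these $n/2$ edges cover all of $U$, so $\cn$ has a perfect matching. Corollary \ref{cor:cone} then gives $\nu(\wn)=\nu(\cn)=n/2=\lceil n/2\rceil$.

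\emph{If $n$ is odd,} $\cn$ has an odd number of vertices and so cannot have a perfect matching. Corollary \ref{cor:cone} gives $\nu(\wn)=\nu(\cn)+1=(n-1)/2+1=(n+1)/2=\lceil n/2\rceil$.

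Alternatively, one could substitute directly into Theorem \ref{thm:joinmatching} and read off $\min\big(n,\ \lfloor n/2\rfloor+1,\ \lfloor (n+1)/2\rfloor\big)=\lceil n/2\rceil$; that route needs the same parity check. No step is a real obstacle. The only point needing care is justifying $\nu(\cn)=\lfloor n/2\rfloor$ together with the perfect-matching dichotomy, and the explicit matching above handles both.
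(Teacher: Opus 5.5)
Your proof is correct and is essentially the paper's argument. The paper substitutes $G=\cn$, $H=\mathcal K_1$ directly into Theorem~\ref{thm:joinmatching}, getting $\min(\lfloor n/2\rfloor+1,\,n,\,\lceil n/2\rceil)$. It then notes that $\lceil n/2\rceil\le\lfloor n/2\rfloor+1$ and $\lceil n/2\rceil\le n$ hold for every $n$, so that route needs no parity split, contrary to your remark. You instead go through Corollary~\ref{cor:cone}, itself a specialization of that theorem, and split on the parity of $n$; this has the small merit of explicitly justifying $\nu(\cn)=\lfloor n/2\rfloor$ and the perfect-matching dichotomy, which the paper uses tacitly.
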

\begin{proof}
	Take $G=\cn$ and $H=\mathcal K_1$ in Theorem \ref{thm:joinmatching}, which gives $\nu(\wn)=\min\big(\lfloor n/2\rfloor+1,\ n,\ \lceil n/2\rceil\big)$. The third term never exceeds the first, because $\lceil n/2\rceil\le\lfloor n/2\rfloor+1$ for every $n$, and it never exceeds the second, because $\lceil n/2\rceil\le n$ for $n\ge1$. The minimum is therefore $\lceil n/2\rceil$.
\end{proof}

\section{A general decomposition of $\Delta_q(G\ast H)$}\label{sec:decomp}

By \eqref{eq:deltaq}, whether a set $F$ belongs to $\Delta_q(G\ast H)$ depends only on the induced subgraph $(G\ast H)[F]$. Since induced subgraphs of joins are again joins, Theorem \ref{thm:joinmatching} applies directly throughout this section. 

\begin{lemma}\label{lem:inducedjoin}
	For $F_G\subseteq V(G)$ and $F_H\subseteq V(H)$,
	\[
	(G\ast H)[F_G\cup F_H] = G[F_G]\ast H[F_H].
	\]
\end{lemma}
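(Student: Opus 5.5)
The plan is simply to check that the two graphs have the same vertex set and the same edge set, using the definitions of induced subgraph and of join.

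\emph{Vertex sets.} Since $V(G)$ and $V(H)$ are disjoint, $F_G$ and $F_H$ are disjoint as well. By definition the left-hand side has vertex set $F_G\cup F_H$. The right-hand side is the join of a graph on $F_G$ with a graph on $F_H$, so its vertex set is also $F_G\cup F_H$.

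\emph{Edge sets.} By definition, the edges of $(G\ast H)[F_G\cup F_H]$ are the edges of $G\ast H$ with both endpoints in $F_G\cup F_H$. Every edge of $G\ast H$ lies in exactly one of three classes: $E(G)$, $E(H)$, or the cross edges $gh$ with $g\in V(G)$ and $h\in V(H)$. I intersect each class with the condition that both endpoints lie in $F_G\cup F_H$.
\begin{enumerate}
\item[(i)] An edge of $G$ has both endpoints in $V(G)$. Since $V(G)\cap F_H=\emptyset$, both endpoints must lie in $F_G$, so these are exactly the edges of $G[F_G]$.
\item[(ii)] Symmetrically, the surviving edges of $H$ are exactly the edges of $H[F_H]$.
\item[(iii)] A cross edge $gh$ survives precisely when $g\in F_G$ and $h\in F_H$. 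These are exactly the cross edges that the join $G[F_G]\ast H[F_H]$ inserts.
\end{enumerate}
Taking the union of the three classes gives precisely $E(G[F_G]\ast H[F_H])$, which proves the lemma.

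\emph{Degenerate cases.} I would note that the argument needs no change when $F_G$ or $F_H$ is empty. For example, if $F_H=\emptyset$, there are no cross edges and both sides equal $G[F_G]$; this matches the convention used in the proof of Theorem~\ref{thm:joinmatching}. There is no genuine obstacle here. The only point needing care is invoking the disjointness of $V(G)$ and $V(H)$, which is what ensures that an edge of $G$ cannot have an endpoint in $F_H$.
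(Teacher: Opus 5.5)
Your proof is correct and follows essentially the same approach as the paper: both compare vertex sets, then split the edges of $G\ast H$ into edges of $G$, edges of $H$, and cross pairs, and restrict each class to $F_G\cup F_H$. Your explicit appeal to the disjointness of $V(G)$ and $V(H)$ spells out a point the paper leaves implicit.
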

\begin{proof}
	The two graphs have the same vertex set $F_G\cup F_H$, so it is enough to compare edge sets. An edge of $G\ast H$ is an edge of $G$, an edge of $H$, or a pair with one endpoint on each side. Restricting to $F_G\cup F_H$ keeps exactly the edges of $G$ inside $F_G$, the edges of $H$ inside $F_H$, and every cross pair with one endpoint in $F_G$ and the other in $F_H$. Therefore
	\[
	E\big((G\ast H)[F_G\cup F_H]\big) = E\big(G[F_G]\big)\cup E\big(H[F_H]\big)\cup\{gh: g\in F_G,\ h\in F_H\},
	\]
	and the right-hand side is precisely the edge set of $G[F_G]\ast H[F_H]$.
\end{proof}

\begin{theorem}\label{thm:gendecomp}
	Let $G,H$ be graphs on disjoint vertex sets and $q\ge1$. For $F=F_G\cup F_H\subseteq V(G)\cup V(H)$,
	\[
	F\in\Delta_q(G\ast H) \iff \min\Big(\big\lfloor\tfrac{|F_G|+|F_H|}2\big\rfloor,\ |F_G|+\nu(H[F_H]),\ |F_H|+\nu(G[F_G])\Big) < q.
	\]
\end{theorem}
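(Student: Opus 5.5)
The plan is a direct chain of substitutions: the statement should follow by combining the definition \eqref{eq:deltaq} with Lemma \ref{lem:inducedjoin} and Theorem \ref{thm:joinmatching}.

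First, by \eqref{eq:deltaq}, $F\in\Delta_q(G\ast H)$ holds exactly when $\nu\big((G\ast H)[F]\big)<q$. Writing $F=F_G\cup F_H$ with $F_G=F\cap V(G)$ and $F_H=F\cap V(H)$, Lemma \ref{lem:inducedjoin} identifies $(G\ast H)[F]$ with the join $G[F_G]\ast H[F_H]$. We then apply Theorem \ref{thm:joinmatching} to the pair $G[F_G]$, $H[F_H]$. These have $|F_G|$ and $|F_H|$ vertices, so
\[
\nu\big(G[F_G]\ast H[F_H]\big)=\min\Big(\nu(G[F_G])+|F_H|,\ \nu(H[F_H])+|F_G|,\ \big\lfloor\tfrac{|F_G|+|F_H|}2\big\rfloor\Big).
\]
The condition that this minimum be less than $q$ is precisely the right-hand side of the claimed equivalence, after reordering the three terms.

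The only point needing care is the degenerate case where $F_G$ or $F_H$ is empty. Theorem \ref{thm:joinmatching} explicitly allows a factor with no vertices, since its proof treats that case first. I will also check that the join with an empty graph is, under Lemma \ref{lem:inducedjoin}, just the other induced subgraph. For instance, if $F_G=\emptyset$ the formula gives $\min(\lfloor|F_H|/2\rfloor,\nu(H[F_H]),|F_H|)=\nu(H[F_H])$, which is correct. I do not expect any real obstacle, because all the matching-theoretic content already lives in Theorem \ref{thm:joinmatching}.
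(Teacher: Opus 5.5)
Your proposal is correct and follows the paper's proof: unwind the definition of $\Delta_q$, use Lemma~\ref{lem:inducedjoin} to identify $(G\ast H)[F]$ with $G[F_G]\ast H[F_H]$, and apply Theorem~\ref{thm:joinmatching} to that join. Your treatment of the empty-part case is a harmless extra check, since the paper covers it inside the proof of Theorem~\ref{thm:joinmatching}.
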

\begin{proof}
	By the definition \eqref{eq:deltaq} of the complex $\Delta_q$,
	\[
	F\in\Delta_q(G\ast H) \iff \nu\big((G\ast H)[F]\big)<q.
	\]
	Lemma \ref{lem:inducedjoin} identifies the induced subgraph as a join,
	\[
	(G\ast H)[F]=G[F_G]\ast H[F_H],
	\]
	and Theorem \ref{thm:joinmatching}, applied to this join with $G[F_G]$ and $H[F_H]$ in the roles of $G$ and $H$, computes its matching number as
	\[
	\nu\big(G[F_G]\ast H[F_H]\big) = \min\Big(\nu(G[F_G])+|F_H|,\ \nu(H[F_H])+|F_G|,\ \big\lfloor\tfrac{|F_G|+|F_H|}2\big\rfloor\Big).
	\]
	The membership condition $\nu\big((G\ast H)[F]\big)<q$ is therefore exactly the stated inequality.
\end{proof}

Specializing to the wheel graph recovers the case analysis carried out for $q=1$ in \cite{RPW} in sharper form.

\begin{corollary}\label{cor:wheeldecomp}
	Let $F\subseteq V(\wn)$ and write $F=F'$ or $F=F'\cup\{v\}$ with $F'\subseteq U$. Then $F\in\Delta_q(\wn)$ if and only if one of the following holds:
	\begin{enumerate}
		\item[(a)] $v\notin F$ and $\nu(\cn[F'])<q$;
		\item[(b)] $v\in F$, and $\nu(\cn[F'])<q$ if $\cn[F']$ has a perfect matching, while $\nu(\cn[F'])<q-1$ if it does not.
	\end{enumerate}
\end{corollary}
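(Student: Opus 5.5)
This is a direct specialization of Theorem \ref{thm:gendecomp} with $G=\cn$ and $H=\mathcal K_1=\{v\}$. Write $F_G=F'$ and $F_H=F\cap\{v\}$. We split according to whether $v\in F$.

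\emph{Case $v\notin F$.} Here $F_H=\emptyset$. The cleanest route is to avoid Theorem \ref{thm:gendecomp} altogether. The induced subgraph $\wn[F']$ equals $\cn[F']$, by Lemma \ref{lem:inducedjoin} with an empty second factor, or simply because $v$ is absent. The definition \eqref{eq:deltaq} then gives $F\in\Delta_q(\wn)$ iff $\nu(\cn[F'])<q$, which is (a).

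As a consistency check, one could instead plug $|F_H|=0$ and $\nu(H[F_H])=0$ into Theorem \ref{thm:gendecomp}. This gives the minimum of $\lfloor |F'|/2\rfloor$, $|F'|$ and $\nu(\cn[F'])$. Since $\nu(\cn[F'])\le\lfloor|F'|/2\rfloor\le|F'|$, that minimum is $\nu(\cn[F'])$, agreeing with the direct argument.

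\emph{Case $v\in F$.} Here $(\wn)[F]=\cn[F']\ast\mathcal K_1$ by Lemma \ref{lem:inducedjoin}. Rather than manipulating the three-term minimum of Theorem \ref{thm:gendecomp}, I would apply Corollary \ref{cor:cone} with $H=\cn[F']$. It gives $\nu(\cn[F']\ast\mathcal K_1)=\nu(\cn[F'])$ when $\cn[F']$ has a perfect matching, and $\nu(\cn[F'])+1$ otherwise. The condition $\nu((\wn)[F])<q$ therefore reads $\nu(\cn[F'])<q$ in the first case and $\nu(\cn[F'])<q-1$ in the second, which is exactly (b).

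The only point needing care is the degenerate case $F'=\emptyset$. The empty graph has a perfect matching, and the induced subgraph is the single vertex $v$, with matching number $0$. Corollary \ref{cor:cone} covers this case explicitly, so (b) correctly says $\{v\}\in\Delta_q$ for all $q\ge1$. I do not expect any genuine obstacle: the statement is a bookkeeping combination of the definition \eqref{eq:deltaq}, Lemma \ref{lem:inducedjoin}, and Corollary \ref{cor:cone}.
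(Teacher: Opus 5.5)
Your proof is correct and is essentially the paper's argument. The paper applies Theorem \ref{thm:gendecomp} with $H=\mathcal K_1$ and, when $v\in F$, compares $\lceil |F'|/2\rceil$ with $1+\nu(\cn[F'])$ via the deficiency, which amounts to re-deriving Corollary \ref{cor:cone} inline for $\cn[F']$, so citing Corollary \ref{cor:cone} after Lemma \ref{lem:inducedjoin}, as you do, gives the same computation more economically, and your handling of $v\notin F$ and of $F'=\emptyset$ is sound.
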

\begin{proof}
	Take $H=\mathcal K_1$ on the single vertex $v$ in Theorem \ref{thm:gendecomp}, with $F_G=F'$, and write $p:=\nu(\cn[F'])$.

	Suppose first that $v\notin F$, so $F_H=\emptyset$. The three terms of Theorem \ref{thm:gendecomp} become
	\[
	\big\lfloor\tfrac{|F'|}2\big\rfloor,\qquad |F'|+\nu(H[\emptyset])=|F'|,\qquad 0+p=p.
	\]
	A matching number never exceeds half the number of vertices, so
	\[
	p\le\big\lfloor\tfrac{|F'|}2\big\rfloor\le|F'|,
	\]
	and the minimum of the three terms is $p$. Membership is therefore equivalent to $p<q$, which is statement (a).

	Suppose now that $v\in F$, so $F_H=\{v\}$. The three terms become
	\[
	\big\lfloor\tfrac{|F'|+1}2\big\rfloor=\big\lceil\tfrac{|F'|}2\big\rceil,\qquad |F'|+\nu(\mathcal K_1)=|F'|,\qquad 1+p.
	\]
	The middle term is never the minimum, because $\lceil|F'|/2\rceil\le|F'|$, and the minimum is
	\[
	\min\Big(\big\lceil\tfrac{|F'|}2\big\rceil,\ 1+p\Big).
	\]
	We compare the two remaining terms through the deficiency $\delta:=d(\cn[F'])=|F'|-2p$, which satisfies $\delta\equiv|F'|\pmod 2$ and vanishes exactly when $\cn[F']$ has a perfect matching.

	If $\cn[F']$ has a perfect matching, then $\delta=0$, the size $|F'|$ is even, and
	\[
	p=\tfrac{|F'|}2=\big\lceil\tfrac{|F'|}2\big\rceil,
	\]
	so the minimum equals $p$ and membership is equivalent to $p<q$, the first alternative of (b).

	If $\cn[F']$ has no perfect matching, then $\delta\ge1$. When $|F'|$ is even, $\delta$ is even and hence $\delta\ge2$, which gives
	\[
	1+p=1+\tfrac{|F'|-\delta}2\le1+\tfrac{|F'|-2}2=\tfrac{|F'|}2=\big\lceil\tfrac{|F'|}2\big\rceil.
	\]
	When $|F'|$ is odd, $\delta$ is odd and hence $\delta\ge1$, which gives
	\[
	1+p=1+\tfrac{|F'|-\delta}2\le1+\tfrac{|F'|-1}2=\tfrac{|F'|+1}2=\big\lceil\tfrac{|F'|}2\big\rceil.
	\]
	In either parity the term $1+p$ is the minimum, so membership is equivalent to $1+p<q$, that is, to $p<q-1$, the second alternative of (b).
\end{proof}

\subsection{A depth formula for cone graphs}\label{ssec:conedepth}

Fix a graph $G$ on $\{x_1,\ldots,x_n\}$ and write $\mathrm{Cone}(G):=G\ast\mathcal K_1$ for the cone over $G$, with new vertex $v$. We compute $\depth\big(S/I(\mathrm{Cone}(G))^{[q]}\big)$, for $S=R[v]$, from two auxiliary ideals in $R$ alone, applied to $G=\cn$, this reduces $\depth(R/I(\wn)^{[q]})$ to two statements about the cycle, both of which Section \ref{sec:data} settles completely.

The second of the two ideals is the following one.

\begin{definition}\label{def:hubideal}
	For $q\ge1$, let
	\[
	J_{G,q} := \big(\, x_j\cdot x_{V(M')} \ :\ j\in V(G),\ M' \text{ a } (q-1)\text{-matching of } G\setminus\{x_j\} \,\big) \subseteq R.
	\]
	For $q=1$ the only $0$-matching is the empty one, so $J_{G,1}=(x_1,\ldots,x_n)=\mathfrak m$.
\end{definition}

The generators of $J_{G,q}$ correspond precisely to the $q$-matching of $\mathrm{Cone}(G)$ that contain an edge incident to $v$. Indeed, a $q$-matching of $\mathrm{Cone}(G)$ either avoids $v$, in which case it is a $q$-matching of $G$ and its monomial is a generator of $I(G)^{[q]}$, or it uses $v$ through some edge $\{v,x_j\}$, in which case the remaining $q-1$ edges form a $(q-1)$-matching $M'$ of $G\setminus\{x_j\}$ and the monomial of the matching is $v\cdot x_j\cdot x_{V(M')}$. Sorting the minimal generators of $I(\mathrm{Cone}(G))^{[q]}$ into these two groups yields the decomposition
\begin{equation}\label{eq:conesplit}
	I(\mathrm{Cone}(G))^{[q]} = I(G)^{[q]} + v\cdot J_{G,q}.
\end{equation}
The two ideals on the right interact in the simplest possible way.

\begin{lemma}\label{lem:intersecteqI1}
	$I(G)^{[q]} \cap J_{G,q} = I(G)^{[q]}$. Equivalently, $I(G)^{[q]}\subseteq J_{G,q}$.
\end{lemma}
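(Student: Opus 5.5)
The two formulations are equivalent for formal reasons: for ideals $A,B$ one has $A\cap B=A$ if and only if $A\subseteq B$. So the plan is to prove the inclusion $I(G)^{[q]}\subseteq J_{G,q}$. Both ideals are monomial ideals, so it suffices to check that every minimal generator of $I(G)^{[q]}$ lies in $J_{G,q}$. In fact we aim to show that each such generator is itself one of the listed generators of $J_{G,q}$.

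Take a $q$-matching $M$ of $G$ with edges $e_1=\{x_a,x_b\},e_2,\ldots,e_q$, so the corresponding generator is $x_{V(M)}$. Set $j:=a$ and $M':=\{e_2,\ldots,e_q\}$. The edges of $M'$ are disjoint from $e_1$, so none of them contains $x_a$. Hence $M'$ is a $(q-1)$-matching of $G\setminus\{x_a\}$, and $x_a\cdot x_{V(M')}$ is a generator of $J_{G,q}$ by Definition \ref{def:hubideal}.

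It remains to note that this generator divides $x_{V(M)}$. Indeed,
\[
x_{V(M)}=x_a\,x_b\,x_{V(M')}=x_b\cdot\big(x_a\,x_{V(M')}\big).
\]
Therefore $x_{V(M)}\in J_{G,q}$. The case $q=1$ is consistent with this: there $J_{G,1}=\mathfrak m$ trivially contains every edge monomial.

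The argument has no serious obstacle. The only point needing care is the degree count. Generators of $J_{G,q}$ have degree $2q-1$, while generators of $I(G)^{[q]}$ have degree $2q$. So we are showing divisibility rather than equality of generators, which is enough for the inclusion. One also checks that $x_b\notin V(M')$ and $x_a\notin V(M')$, so every monomial involved is squarefree. This holds because $M$ is a matching.
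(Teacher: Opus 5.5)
Your proof is correct and matches the paper's argument: delete one edge $\{x_a,x_b\}$ from $M$, note that the remaining $(q-1)$-matching $M'$ avoids $x_a$, and observe that the generator $x_a\,x_{V(M')}$ of $J_{G,q}$ divides $x_{V(M)}$. One small fix: drop your opening claim that each generator of $I(G)^{[q]}$ is itself a listed generator of $J_{G,q}$. It is false for degree reasons ($2q$ versus $2q-1$), as you note yourself at the end, and divisibility is what you actually prove and all that the inclusion requires.
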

\begin{proof}
	It suffices to show that every generator of $I(G)^{[q]}$ lies in $J_{G,q}$. Let $M$ be a $q$-matching of $G$ and pick any edge $\{x_a,x_b\}\in M$. Deleting this edge from $M$ leaves a $(q-1)$-matching $M':=M\setminus\{\{x_a,x_b\}\}$, and $M'$ avoids the vertex $x_a$ entirely, so $M'$ is a $(q-1)$-matching of $G\setminus\{x_a\}$. Consequently $x_a\cdot x_{V(M')}$ is a generator of $J_{G,q}$, and it divides $x_{V(M)}$, because
	\[
	V(M)=\{x_a,x_b\}\cup V(M')\supseteq\{x_a\}\cup V(M').
	\]
	Hence $x_{V(M)}\in J_{G,q}$, which proves the inclusion, and the intersection statement follows at once.
\end{proof}

With this inclusion available, $J_{G,q}$ can be identified precisely as a Stanley-Reisner ideal in its own right. Write $\Delta(J_{G,q}):=\{F'\subseteq V(G) : F'\cup\{v\}\in\Delta_q(\mathrm{Cone}(G))\}$, we claim $J_{G,q}$ is exactly the Stanley-Reisner ideal of this complex, that is, $x_{F'}\notin J_{G,q}$ if and only if $F'\in\Delta(J_{G,q})$. By \eqref{eq:deltaq}, $F'\in\Delta(J_{G,q})$ means $v\cdot x_{F'}\notin I(\mathrm{Cone}(G))^{[q]}=I(G)^{[q]}+v\cdot J_{G,q}$. If $x_{F'}\in J_{G,q}$, then $v\cdot x_{F'}$ is divisible by a generator of $v\cdot J_{G,q}$, so it lies in this sum. Conversely, if $v\cdot x_{F'}$ lies in the sum, it is divisible by a generator of $I(G)^{[q]}$ or of $v\cdot J_{G,q}$, in the latter case $x_{F'}\in J_{G,q}$ directly, while in the former case the generator, not involving $v$, divides $x_{F'}$ itself, and by Lemma \ref{lem:intersecteqI1} every generator of $I(G)^{[q]}$ already lies in $J_{G,q}$, so again $x_{F'}\in J_{G,q}$. Either way, $v\cdot x_{F'}\in I(\mathrm{Cone}(G))^{[q]}$ if and only if $x_{F'}\in J_{G,q}$, which proves the claim. For $G=\cn$, membership in $\Delta(J_{\cn,q})$ is precisely the condition described in Corollary \ref{cor:wheeldecomp}(b).

We can now state and prove the main result of this section.

\begin{theorem}\label{thm:conedepth}
	Let $G$ be a graph on $n$ vertices and $q\ge1$. If
	\[
	d_1 := \depth_R\big(R/I(G)^{[q]}\big) > d_2 := \depth_R\big(R/J_{G,q}\big),
	\]
	then
	\[
	\depth_S\big(S/I(\mathrm{Cone}(G))^{[q]}\big) = d_2+1.
	\]
\end{theorem}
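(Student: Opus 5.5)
Let $I:=I(G)^{[q]}$, $J:=J_{G,q}$ and $L:=I(\mathrm{Cone}(G))^{[q]}=IS+vJS$, as in \eqref{eq:conesplit}. I will prove the theorem with the standard short exact sequence for multiplication by the variable $v$ and then apply the depth lemma. The key point is the colon ideal. Since $v$ does not divide any generator of $I$ or $J$, we have $(L:v)=IS+JS=JS$, where the second equality uses Lemma \ref{lem:intersecteqI1}. Also, $L+(v)=IS+(v)$. This gives the exact sequence of graded $S$-modules
\[
0\longrightarrow S/JS(-1)\xrightarrow{\ \cdot v\ } S/L\longrightarrow S/(IS+(v))\longrightarrow 0 .
\]

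Next I compute the depths of the outer terms. Since $S=R[v]$ is a polynomial extension and $J$ is extended from $R$, the variable $v$ is a nonzerodivisor on $S/JS$. Hence $\depth_S(S/JS)=\depth_R(R/J)+1=d_2+1$. On the other side, $S/(IS+(v))\cong R/I$ as $S$-modules, and its depth over $S$ equals $\depth_R(R/I)=d_1$. One can see this either by Auslander–Buchsbaum with $\pd_S=\pd_R+1$ and one extra variable, or directly because the local cohomology with respect to $\mathfrak m_S$ agrees with that with respect to $\mathfrak m_R$ when $v$ acts as zero.

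Now I apply the depth lemma to $0\to A\to B\to C\to 0$, where $\depth A=d_2+1$ and $\depth C=d_1$. It gives $\depth B\ge\min(\depth A,\depth C)$, and, more sharply, $\depth B=\depth A$ whenever $\depth C>\depth A-1$, i.e.\ $\depth C\ge\depth A$. The hypothesis $d_1>d_2$ means exactly $d_1\ge d_2+1=\depth A$. To see why this suffices, use the long exact sequence of local cohomology $H^{i-1}_{\mathfrak m}(C)\to H^i_{\mathfrak m}(A)\to H^i_{\mathfrak m}(B)\to H^i_{\mathfrak m}(C)$. For $i<d_2+1\le d_1$ we get $H^i(A)=0=H^i(C)$, so $H^i(B)=0$. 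At $i=d_2+1$ we have $H^{i-1}(C)=H^{d_2}(C)=0$ because $d_2<d_1$. The map $H^{d_2+1}(A)\to H^{d_2+1}(B)$ is therefore injective, and $H^{d_2+1}(A)\neq0$, so $H^{d_2+1}(B)\neq0$. Hence $\depth_S(S/L)=d_2+1$.

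The main obstacle is the equality $(L:v)=JS$. It is precisely where Lemma \ref{lem:intersecteqI1} enters: without the inclusion $I\subseteq J$ the colon would be $I+J$, which is a different ideal. I also have to keep the grading shift and the change of rings straight, but these do not affect depth. The edge case $d_2=0$, when $J=\mathfrak m$ as for $q=1$, is covered by the same argument.
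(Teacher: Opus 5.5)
Your proof is correct, but it runs through a different short exact sequence from the paper's.

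The paper uses the Mayer--Vietoris sequence for the splitting \eqref{eq:conesplit}, namely $0\to S/(vI_1)\to S/I_1\oplus S/(vJ_{G,q})\to S/I(\mathrm{Cone}(G))^{[q]}\to 0$. There Lemma~\ref{lem:intersecteqI1} enters through $I_1\cap vJ_{G,q}=vI_1$, and the module of interest is the cokernel. Before applying local cohomology, the paper needs auxiliary projective-dimension facts to get $\depth_S(S/(vL))=\depth_R(R/L)+1$ for the two terms of the form $S/(vL)$.

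You use the colon sequence for the variable $v$ instead, with the same lemma entering through $(L:v)=JS$. Your outer terms $S/JS$ and $S/(IS+(v))\cong R/I$ have depths $d_2+1$ and $d_1$ that are immediate: $v$ is a regular element on the first and acts as zero on the second. So that bookkeeping disappears, and the unknown module sits in the middle, where the depth lemma applies directly.

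Your sequence also yields slightly more. If $d_1<d_2$, then $H^{d_1+1}_{\mathfrak m}(A)=0$, so $H^{d_1}_{\mathfrak m}(S/L)\to H^{d_1}_{\mathfrak m}(R/I)$ is surjective. This forces $\depth_S(S/L)=d_1$, so $\depth_S(S/L)=\min(d_1,d_2+1)$ whenever $d_1\neq d_2$. In the paper's sequence that case has $\depth A=\depth B=d_1+1$ and is not decided by depth considerations alone.

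The paper's route, for its part, follows the generator decomposition \eqref{eq:conesplit} literally and keeps both summands visible.
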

\begin{proof}
Write $I=I(\operatorname{Cone}(G))^{[q]}$, $I_1=I(G)^{[q]}$, $I_2=v\cdot J_{G,q}$, so $I=I_1+I_2$ as recalled above. Since $I_1,J_{G,q}\subseteq R$ don't involve $v$, comparing minimal generators shows $I_1\cap I_2=I_1\cap(vJ_{G,q})=v\cdot(I_1\cap J_{G,q})$ (an lcm of a generator of $I_1$ with $v$ times a generator of $J_{G,q}$ is always $v$ times their lcm in $R$). By Lemma 4.5, $I_1\cap I_2=v\cdot I_1$. This gives the standard short exact sequence
\begin{equation}\label{eq:ses}
0\longrightarrow S/(vI_1)\longrightarrow S/I_1\oplus S/(vJ_{G,q})\longrightarrow S/I\longrightarrow 0.
\end{equation}

We compute each depth via two elementary facts about $S=R[v]$ and a nonzero ideal $L\subseteq R$:

\begin{enumerate}
\item[(i)] $\operatorname{pd}_S(S/L)=\operatorname{pd}_R(R/L)$: $S/L\cong (R/L)\otimes_R S$, and tensoring a minimal free $R$-resolution of $R/L$ with the free (hence flat) $R$-module $S$ yields a minimal free $S$-resolution.

\item[(ii)] $\operatorname{pd}_S(S/(vL))=\operatorname{pd}_R(R/L)$: multiplication by $v$ gives an $S$-module isomorphism $L\otimes_R S\xrightarrow{\cong}vL$ of internal degree $1$, so $\operatorname{pd}_S(vL)=\operatorname{pd}_S(L\otimes_R S)=\operatorname{pd}_R(L)$ by (i) applied to the module $L$; then $\operatorname{pd}_S(S/(vL))=\operatorname{pd}_S(vL)+1=\operatorname{pd}_R(L)+1=\operatorname{pd}_R(R/L)$ (using $\operatorname{pd}_R(L)=\operatorname{pd}_R(R/L)-1$).
\end{enumerate}

Applying (i) to $I_1$, (ii) to $J_{G,q}$, and (ii) again to $I_1$ (for the $vI_1$ term), Auslander--Buchsbaum ($S$ has $n+1$ variables, $R$ has $n$) gives
\[
\operatorname{depth}_S(S/I_1)=d_1+1,\qquad \operatorname{depth}_S(S/(vJ_{G,q}))=d_2+1,\qquad \operatorname{depth}_S(S/(vI_1))=d_1+1.
\]

In the notation of (\ref{eq:ses}): $A:=S/(vI_1)$ has depth $d_1+1$; $B:=S/I_1\oplus S/(vJ_{G,q})$ has depth $\min(d_1,d_2)+1=d_2+1$ (using $d_1>d_2$); $C:=S/I$. Since $\operatorname{depth}(A)=d_1+1>d_2+1=\operatorname{depth}(B)$, the long exact sequence in local cohomology attached to (\ref{eq:ses}) gives, at cohomological degree $i=\operatorname{depth}(B)$: $H_{\mathfrak m}^{\operatorname{depth}(B)}(A)=0$ (as $\operatorname{depth}(B)<\operatorname{depth}(A)$), so the map $H_{\mathfrak m}^{\operatorname{depth}(B)}(B)\longrightarrow H_{\mathfrak m}^{\operatorname{depth}(B)}(C)$ is injective on the nonzero group $H_{\mathfrak m}^{\operatorname{depth}(B)}(B)$, forcing $H_{\mathfrak m}^{\operatorname{depth}(B)}(C)\neq0$, i.e. $\operatorname{depth}(C)\leq\operatorname{depth}(B)$. Conversely the general depth-lemma inequality $\operatorname{depth}(C)\geq\min(\operatorname{depth}(B),\operatorname{depth}(A)-1)=\operatorname{depth}(B)$ (using $\operatorname{depth}(A)-1\geq\operatorname{depth}(B)$) gives $\operatorname{depth}(C)\geq\operatorname{depth}(B)$. Hence $\operatorname{depth}(C)=\operatorname{depth}(B)=d_2+1$.
\end{proof}

\section{Krull dimension and height}\label{sec:dim}

Corollary \ref{cor:wheeldecomp} reduces the dimension of $R/I(\wn)^{[q]}$ to a question about the cycle alone: how large can a subset $F'\subseteq U$ be while keeping $\nu(\cn[F'])$ below a given threshold? We address this question in two lemmas, the first with no further constraint on $F'$ and the second under the extra requirement that $\cn[F']$ have a perfect matching. Theorem \ref{thm:dim} then assembles the two answers into a single formula.

\begin{lemma}\label{lem:arcgen}
	Let $n\ge3$ and let $k$ be an integer with $0\le k\le\lfloor n/2\rfloor-1$. Then
	\[
	\max\big\{\,|F| : F\subsetneq V(\cn),\ \nu(\cn[F])\le k\,\big\} = \Big\lfloor\frac n2\Big\rfloor+k.
	\]
\end{lemma}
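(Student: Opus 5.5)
Since $F$ is a proper subset of $V(\cn)$, the induced subgraph $\cn[F]$ is a disjoint union of paths, which we call arcs. A path on $m$ vertices has matching number $\lfloor m/2\rfloor$. So if $\cn[F]$ has arcs of sizes $m_1,\dots,m_r$ (with $r\ge1$ when $F\neq\emptyset$), then
\[
\nu(\cn[F])=\sum_{i=1}^r\lfloor m_i/2\rfloor ,
\]
and the deleted set $V(\cn)\setminus F$ has at least $r$ vertices, one gap between consecutive arcs. This reduces the lemma to an integer optimization, with an upper bound and a matching construction.

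\emph{Upper bound.} Let $o$ denote the number of odd arcs. Then $|F|=\sum m_i=2\nu(\cn[F])+o\le 2k+o$. Each odd arc is followed by at least one gap vertex, so $|F|+o\le |F|+r\le n$. Adding the two inequalities gives $2|F|\le 2k+n$, hence
\[
|F|\le k+\lfloor n/2\rfloor .
\]
This is clean, and I expect it to be the heart of the argument. The one thing to check is that $F=\emptyset$ and small cases cause no trouble; both are trivial.

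\emph{Construction.} I would choose one arc of $2k+1$ consecutive vertices, with matching number $k$. Then I would take singletons, each separated by a single deleted vertex, in the remaining $n-(2k+1)$ positions. These positions lie strictly between gap vertices on either side of the long arc.

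Concretely, after the long arc and a gap, there remain $n-2k-2$ vertices on a path. Alternating, starting and ending compatibly with the required gap before the long arc, I can take $\lfloor (n-2k-1)/2\rfloor$ singletons. The total is then
\[
2k+1+\lfloor (n-2k-1)/2\rfloor=\lfloor n/2\rfloor+k .
\]
Singletons contribute no matching edges, so $\nu(\cn[F])=k$. The set $F$ is proper because at least one vertex is deleted, using $k\le\lfloor n/2\rfloor-1$, which gives $2k+1\le n-1$.

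I need to double-check the parity count. For even $n$ the leftover is odd and the floor is right; for odd $n$ it also works out. The hypothesis $k\le\lfloor n/2\rfloor-1$ is exactly what ensures the long arc fits with a gap.

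The main obstacle is this careful bookkeeping of the construction at the boundary; the upper bound is a two-line counting argument.
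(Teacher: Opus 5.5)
Your upper bound is correct and uses the same counting as the paper. Arcs and gaps alternate, so $|F|+t\le n$, and $|F|=2\nu(\cn[F])+o$, where $o\le t$ is the number of odd arcs. Adding $|F|\le 2k+o$ to $|F|+o\le n$ is a tidier finish than the paper's. The paper fixes $t$, argues informally that all arcs should be odd, derives $|F|(K)=2K+\lfloor (n-2K)/2\rfloor$, and then checks that this increases with $K$. Your single inequality $2|F|\le 2k+n$ covers every parity pattern at once and needs neither step. Your extremal configuration (one arc of length $2k+1$ plus singletons separated by single gap vertices) is exactly the paper's.

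The count in your construction, however, is wrong for odd $n$, which is the case you asserted ``also works out''. For odd $n$,
\[
2k+1+\Big\lfloor\frac{n-2k-1}{2}\Big\rfloor = k+1+\frac{n-1}{2} = \Big\lceil\frac n2\Big\rceil+k,
\]
which exceeds the upper bound you just proved. The number $\lfloor (n-2k-1)/2\rfloor=\lceil (n-2k-2)/2\rceil$ is the independence number of the whole leftover path on $n-2k-2$ vertices. But the last vertex of that path is adjacent to the first vertex of the long arc, so it must itself be a gap vertex, as your own remark about gaps ``on either side of the long arc'' requires. When $n-2k-2$ is odd, the unique maximum independent set of that path contains this vertex. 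For example, take $n=5$, $k=0$ and arc $\{x_1\}$: your count admits the singletons $x_3,x_5$, but $x_5x_1$ is an edge, and $\mathcal C_5$ has no independent set of size $3$.

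The admissible positions actually form a path on $n-2k-3$ vertices (none when $n=2k+2$, where one gap vertex borders both ends of the arc). This gives $\lceil (n-2k-3)/2\rceil=\lfloor n/2\rfloor-k-1$ singletons, so the total is $2k+1+\lfloor n/2\rfloor-k-1=\lfloor n/2\rfloor+k$ in both parities. This matches the paper's $t-1$ singletons with $t=\lfloor (n-2k)/2\rfloor$, and the hypothesis $k\le\lfloor n/2\rfloor-1$ is exactly what makes this number nonnegative. With that correction your proof is complete.
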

\begin{proof}
	The bound is trivial for $F=\emptyset$, so assume $F$ is a nonempty proper subset of $V(\cn)$.

	Removing $F$ from the cycle leaves a nonempty complement, and both $F$ and its complement break up into maximal runs of consecutive vertices: the runs inside $F$ are its \emph{arcs}, and the runs outside $F$, lying between consecutive arcs, are its \emph{gaps}. Since $F$ is neither empty nor the whole cycle, arcs and gaps alternate all the way around, so there are exactly as many gaps as arcs, and every gap contains at least one vertex. Writing $m_1,\ldots,m_t\ge1$ for the arc lengths, the $t$ gaps and $t$ arcs together account for all $n$ vertices, so
	\[
	|F| + t \le n.
	\]

	Each arc is a path on $m_i$ vertices, with matching number $\lfloor m_i/2\rfloor$, and matching numbers add across the disjoint arcs,
	\[
	\nu(\cn[F]) = \sum_{i=1}^t\Big\lfloor\frac{m_i}2\Big\rfloor.
	\]
	Write $m_i=2k_i+\varepsilon_i$ with $k_i=\lfloor m_i/2\rfloor$ and $\varepsilon_i\in\{0,1\}$, and set $K:=\sum_ik_i=\nu(\cn[F])$. Then
	\[
	|F| = \sum_im_i = 2K+\sum_i\varepsilon_i.
	\]
	For fixed $t$ and $K\le k$, the quantity \(|F|\) is maximised when every arc is odd, $\varepsilon_i=1$ for all $i$, which is always achievable by giving one arc length $2K+1$ and every other arc length $1$. With every $\varepsilon_i=1$,
	\[
	|F| = 2K+t,
	\]
	and the constraint $|F|+t\le n$ becomes
	\[
	t \le \Big\lfloor\frac{n-2K}2\Big\rfloor.
	\]
	Since $|F|=2K+t$ grows with $t$, the best choice is the largest $t$ allowed, giving
	\[
	|F|(K) := 2K + \Big\lfloor\frac{n-2K}2\Big\rfloor.
	\]

	It remains to optimize over $K\le k$. For every integer $m$, $\lfloor(m-2)/2\rfloor=\lfloor m/2\rfloor-1$, so
	\[
	|F|(K+1)-|F|(K) = 2+\Big\lfloor\frac{n-2K-2}2\Big\rfloor-\Big\lfloor\frac{n-2K}2\Big\rfloor = 2-1 = 1.
	\]
	Thus $|F|(K)$ increases by exactly $1$ with every unit increase of $K$, and the maximum over $0\le K\le k$ occurs at $K=k$,
	\[
	|F|(k) = 2k+\Big\lfloor\frac{n-2k}2\Big\rfloor = 2k+\Big\lfloor\frac n2\Big\rfloor-k = \Big\lfloor\frac n2\Big\rfloor+k.
	\]

	This value is attained. The hypothesis $k\le\lfloor n/2\rfloor-1$ gives
	\[
	n-2k \ge n-2\Big\lfloor\frac n2\Big\rfloor+2 \ge 2,
	\]
	so $t=\lfloor(n-2k)/2\rfloor\ge1$, and the configuration above (one arc of length $2k+1$ together with $t-1$ arcs of length $1$, separated by $t$ gaps of total size $n-2k-t\ge t$) realizes both $|F|=\lfloor n/2\rfloor+k$ and $\nu(\cn[F])=k$.
\end{proof}

\begin{lemma}\label{lem:arcpm}
	Let $n\ge3$ and let $k\ge0$ be an integer with $2k\le n-2$. Then
	\[
	\max\big\{\,|F| : F\subseteq V(\cn),\ \cn[F]\text{ is a disjoint union of even arcs},\ \nu(\cn[F])\le k\,\big\} = 2k.
	\]
\end{lemma}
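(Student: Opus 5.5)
The proof has two halves: an upper bound that follows from the perfect-matching condition, and a matching construction. The only genuine subtlety is that the whole cycle must be kept out of the competing configurations.

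\emph{Upper bound.} Take an admissible $F$, so $\cn[F]$ is a disjoint union of even arcs. We need to know that $\cn[F]$ really is a union of paths, that is, that $F\neq V(\cn)$. The phrase ``disjoint union of even arcs'' is read in the sense of Lemma~\ref{lem:arcgen}: arcs are the maximal runs of a proper subset. If $F$ were the whole cycle, then $\nu(\cn[F])=\lfloor n/2\rfloor$. The hypothesis $2k\le n-2$ gives $k\le\lfloor n/2\rfloor-1<\nu(\cn)$, so $V(\cn)$ violates the constraint $\nu\le k$ in any case, and we may assume $F\subsetneq V(\cn)$.

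Now decompose $F$ into its arcs, of lengths $m_1,\ldots,m_t$, all even by hypothesis. Each arc is a path on $m_i$ vertices, so $\nu$ of that arc equals $m_i/2$. Since matching numbers add over the disjoint arcs,
\[
|F|=\sum_i m_i=2\sum_i\frac{m_i}{2}=2\nu(\cn[F])\le 2k.
\]
The same identity holds trivially when $F=\emptyset$.

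\emph{Attainment.} If $k=0$, take $F=\emptyset$. If $k\ge1$, take $F$ to be one run of $2k$ consecutive vertices of the cycle. Because $2k\le n-2<n$, this run is a proper subset with a nonempty gap, so $\cn[F]$ is the path $P_{2k}$. That path is a single even arc with $\nu(P_{2k})=k$, and $|F|=2k$.

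\emph{Main obstacle.} The only point needing care is the wrap-around case just handled. An even cycle $\cn$ has a perfect matching but is not a union of paths, and a run of length $n-1$ or $n$ would have to be checked for induced chords. The hypothesis $2k\le n-2$ is exactly what guarantees the extremal arc leaves a gap and that $V(\cn)$ itself is excluded. Everything else is the additivity computation already used in the proof of Lemma~\ref{lem:arcgen}.
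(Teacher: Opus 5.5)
Your proposal is correct and follows the paper's proof essentially verbatim: additivity of matching numbers over the even arcs gives $|F|=2\nu(\cn[F])\le 2k$, and $F=\emptyset$ (for $k=0$) or a single run of $2k$ consecutive vertices (for $k\ge1$, proper because $2k\le n-2$) attains the bound. Your explicit exclusion of $F=V(\cn)$ via $k\le\lfloor n/2\rfloor-1<\nu(\cn)$ is a harmless extra step that the paper leaves implicit. Your worry about chords for runs of length $n-1$ is unnecessary, since any proper run of consecutive cycle vertices induces a path.
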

\begin{proof}
	If $\cn[F]$ is a disjoint union of even arcs of lengths $m_1,\ldots,m_t$, each arc already carries its own perfect matching, so
	\[
	\nu(\cn[F]) = \sum_{i=1}^t\frac{m_i}2 = \frac{|F|}2,
	\]
	and the hypothesis $\nu(\cn[F])\le k$ forces $|F|\le2k$. This is attained: for $k=0$, take $F=\emptyset$ and for $k\ge1$, take $F$ to be a single even arc of length $2k$, since $2k\le n-2$, the one gap left over has size $n-2k\ge2$, so $F$ is a genuine proper subset realizing $\nu(\cn[F])=k$.
\end{proof}

\begin{theorem}\label{thm:dim}
	For all $n\ge3$ and $1\le q\le\lfloor n/2\rfloor$,
	\[
	\dimn\big(R/I(\wn)^{[q]}\big) = \Big\lfloor\frac n2\Big\rfloor+q-1, \qquad
	\height\big(I(\wn)^{[q]}\big) = (n+1)-\Big\lfloor\frac n2\Big\rfloor-q+1.
	\]
\end{theorem}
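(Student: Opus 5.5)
The dimension of $R/I(\wn)^{[q]}$ is the maximum of $|F|$ over faces $F\in\Delta_q(\wn)$. Corollary \ref{cor:wheeldecomp} splits the faces into those avoiding $v$ and those containing $v$, so I would compute the largest face in each class and take the larger value. The height formula then follows from $\height=(n+1)-\dimn$.

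\emph{Faces avoiding $v$.} These are the sets $F'\subseteq U$ with $\nu(\cn[F'])\le q-1$. We have $\nu(\cn)=\lfloor n/2\rfloor\ge q$, so $F'=U$ is excluded and $F'$ is a proper subset. Apply Lemma \ref{lem:arcgen} with $k=q-1$; the range $0\le k\le\lfloor n/2\rfloor-1$ holds exactly because $1\le q\le\lfloor n/2\rfloor$. This gives maximum size $\lfloor n/2\rfloor+q-1$, and the maximum is attained.

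\emph{Faces containing $v$.} Here $F=F'\cup\{v\}$, and there are two subcases.

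If $\cn[F']$ has a perfect matching, the condition is $\nu\le q-1$. The case $F'=U$ would need $n$ even with $\nu=n/2\le q-1$, which is impossible. So $F'$ is a proper subset, hence a disjoint union of arcs; a perfect matching forces every arc to be even. Lemma \ref{lem:arcpm} with $k=q-1$ applies, since $2q-2\le n-2$, and gives $|F'|\le 2q-2$. Thus $|F|\le 2q-1$.

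If $\cn[F']$ has no perfect matching, the condition is $\nu\le q-2$. This only matters when $q\ge2$. Again $F'\ne U$, because $\nu(\cn)\ge q$. Lemma \ref{lem:arcgen} with $k=q-2$ gives $|F'|\le\lfloor n/2\rfloor+q-2$, so $|F|\le\lfloor n/2\rfloor+q-1$. This is the same bound as before, now attained with $v$ included.

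\emph{Comparison.} It remains to check $2q-1\le\lfloor n/2\rfloor+q-1$, which is equivalent to $q\le\lfloor n/2\rfloor$, our hypothesis. So the overall maximum is $\lfloor n/2\rfloor+q-1$. Subtracting from $n+1$ gives the height formula.

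\emph{Main obstacle.} The step needing the most care is making the lemmas apply exactly. First, one must rule out $F'=U$ in every case, which uses $\nu(\cn)=\lfloor n/2\rfloor\ge q$, together with the parity point that an odd cycle has no perfect matching. Second, one must check that a perfect matching of $\cn[F']$ with $F'\subsetneq U$ really forces even arcs. This holds because each arc is a path, and a path has a perfect matching only if it has an even number of vertices.
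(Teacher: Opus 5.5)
Your proposal is correct and follows the paper's proof essentially step for step: the same split via Corollary \ref{cor:wheeldecomp}, Lemma \ref{lem:arcgen} with $k=q-1$ and with $k=q-2$, Lemma \ref{lem:arcpm} with $k=q-1$, and the final comparison $2q-1\le\lfloor n/2\rfloor+q-1$. Two points the paper leaves implicit when invoking the lemmas are made explicit in your version: you exclude $F'=U$ in both $v$-containing subcases, and you note that for proper $F'$ a perfect matching forces every arc to be even.
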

\begin{proof}
	By Corollary \ref{cor:wheeldecomp}, $\dimn(R/I(\wn)^{[q]})=\max\{|F|:F\in\Delta_q(\wn)\}$, and we split the maximization according to whether $v\in F$.

	\emph{Step 1.} Suppose $F=F'\subseteq U$. By Corollary \ref{cor:wheeldecomp}(a), $F'\in\Delta_q(\wn)$ exactly when $\nu(\cn[F'])\le q-1$. Since $1\le q\le\lfloor n/2\rfloor$ gives $0\le q-1\le\lfloor n/2\rfloor-1$, Lemma \ref{lem:arcgen} applies with $k=q-1$ and gives
	\[
	\max\{|F'|\} = \Big\lfloor\frac n2\Big\rfloor+q-1.
	\]
	The set $F'=U$ itself never occurs among the maximizers, since $\nu(\cn[U])=\lfloor n/2\rfloor\ge q$ by Corollary \ref{cor:nuwn}, which exceeds $q-1$.

	\emph{Step 2.} Suppose $F=\{v\}\cup F'$. Corollary \ref{cor:wheeldecomp}(b) splits into two possibilities according to whether $\cn[F']$ has a perfect matching.

	If $\cn[F']$ has a perfect matching and $\nu(\cn[F'])\le q-1$, then $2(q-1)\le2\lfloor n/2\rfloor-2\le n-2$, so Lemma \ref{lem:arcpm} applies with $k=q-1$ and gives $\max\{|F'|\}=2(q-1)$, hence
	\[
	|F| = 1+|F'| \le 2q-1.
	\]
	If $\cn[F']$ has no perfect matching and $\nu(\cn[F'])\le q-2$, a condition only meaningful when $q\ge2$, then $0\le q-2\le\lfloor n/2\rfloor-1$, so Lemma \ref{lem:arcgen} applies with $k=q-2$ and gives $\max\{|F'|\}=\lfloor n/2\rfloor+q-2$, hence
	\[
	|F| = 1+|F'| \le \Big\lfloor\frac n2\Big\rfloor+q-1.
	\]
	Since $q\le\lfloor n/2\rfloor$ gives $2q-1\le\lfloor n/2\rfloor+q-1$, the first possibility never exceeds the second, and the second matches Step 1 exactly.

	Both steps yield the same maximum,
	\[
	\max\{|F|\} = \Big\lfloor\frac n2\Big\rfloor+q-1,
	\]
	which proves the dimension formula. The height formula follows from $|V(\wn)|=n+1$.
\end{proof}

\begin{proposition}\label{prop:cycledim}
	For all $n\ge3$ and $1\le q\le\lfloor n/2\rfloor$,
	\[
	\dimn\big(R/I(\cn)^{[q]}\big) = \Big\lfloor\frac n2\Big\rfloor+q-1.
	\]
\end{proposition}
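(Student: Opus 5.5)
The plan is to read this off directly from Lemma \ref{lem:arcgen}, in the same way as Step 1 of the proof of Theorem \ref{thm:dim}. Since $I(\cn)^{[q]}=I_{\Delta_q(\cn)}$, we have
\[
\dimn\big(R/I(\cn)^{[q]}\big)=\max\{|F|: F\subseteq V(\cn),\ \nu(\cn[F])\le q-1\}.
\]
So it suffices to compute this maximum with $k=q-1$.

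First, the whole vertex set $F=V(\cn)$ is not a face. Indeed, $\nu(\cn)=\lfloor n/2\rfloor\ge q>q-1$, using the hypothesis $q\le\lfloor n/2\rfloor$. Hence the maximum may be taken over proper subsets $F\subsetneq V(\cn)$.

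Second, the hypothesis $1\le q\le\lfloor n/2\rfloor$ gives $0\le q-1\le\lfloor n/2\rfloor-1$. Thus Lemma \ref{lem:arcgen} applies with $k=q-1$, and the maximum equals $\lfloor n/2\rfloor+q-1$. The lemma also supplies an extremal set: one arc of length $2q-1$ together with $t-1$ singleton arcs, where $t=\lfloor(n-2q+2)/2\rfloor$. This confirms that the value is attained by a face of $\Delta_q(\cn)$.

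There is essentially no obstacle. The only point to check is that excluding $F=V(\cn)$ is legitimate, which is the matching-number inequality above. That inequality uses only $\nu(\cn)=\lfloor n/2\rfloor$, which follows from the arc computation (or from the Tutte--Berge formula) and does not need the wheel.
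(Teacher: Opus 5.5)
Your proposal is correct and follows the paper's proof, which simply applies Lemma \ref{lem:arcgen} with $k=q-1$, as in Step 1 of Theorem \ref{thm:dim}. You also check explicitly that $F=V(\cn)$ is not a face, using $\nu(\cn)=\lfloor n/2\rfloor\ge q$ without reference to the wheel; this fills in a step the paper leaves implicit, since the lemma ranges only over proper subsets.
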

\begin{proof}
	This is Lemma \ref{lem:arcgen} applied directly to $F\subseteq V(\cn)$ with $k=q-1$. The vertex $v$ plays no part in the argument, so removing it changes nothing.
\end{proof}

\begin{proposition}\label{prop:extremal}
	Fix $n,q$ with $1\le q\le\lfloor n/2\rfloor$, and set $t=\lfloor n/2\rfloor-q+1$. Choose odd integers $m_1,\ldots,m_t\ge1$ with $\sum_i\lfloor m_i/2\rfloor=q-1$, arrange them cyclically around $\cn$ separated by $t$ gaps of size $1$ if $n$ is even, or by $t-1$ gaps of size $1$ and one gap of size $2$ if $n$ is odd, and let $F'\subseteq U$ be the resulting set of arc vertices. Then $F'\in\Delta_q(\wn)$ and $|F'|=\lfloor n/2\rfloor+q-1$.
\end{proposition}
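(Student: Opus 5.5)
The plan is to verify directly that the configuration is well defined and fits on the cycle, then compute $|F'|$ and $\nu(\cn[F'])$, and finally invoke Corollary~\ref{cor:wheeldecomp}(a).

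\emph{Feasibility.} Since each $m_i$ is odd, $\sum_i m_i=2\sum_i\lfloor m_i/2\rfloor+t=2(q-1)+t$. The total gap size is $t$ if $n$ is even and $t+1$ if $n$ is odd, which is $t+(n-2\lfloor n/2\rfloor)$ in both cases. Adding arcs and gaps gives
\[
2(q-1)+2t+\big(n-2\lfloor n/2\rfloor\big)=2(q-1)+2\lfloor n/2\rfloor-2q+2+n-2\lfloor n/2\rfloor=n .
\]
So the arcs and gaps tile $\cn$ exactly. We also need $t\ge1$, which follows from $q\le\lfloor n/2\rfloor$. Odd $m_i$ with the required floor-sum exist, for instance $m_1=2q-1$ and $m_i=1$ for $i\ge2$, as in Lemma~\ref{lem:arcgen}.

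\emph{Matching number and size.} The main point to check is that the arcs really are separate components of $\cn[F']$. This holds because consecutive arcs are separated by at least one gap vertex, and $t\ge1$ means there is at least one gap, so $F'\ne U$ and no wrap-around merges two arcs. Each arc then induces a path $P_{m_i}$, so
\[
\nu(\cn[F'])=\sum_i\lfloor m_i/2\rfloor=q-1<q .
\]
By Corollary~\ref{cor:wheeldecomp}(a), with $v\notin F'$, this gives $F'\in\Delta_q(\wn)$. Finally, $|F'|=\sum_i m_i=2(q-1)+t=\lfloor n/2\rfloor+q-1$.

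This shows that the bound of Theorem~\ref{thm:dim} is attained by a face avoiding the hub. No step is a real obstacle; the only care needed is the parity bookkeeping of the gap sizes.
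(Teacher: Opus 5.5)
Your proof is correct and follows essentially the same route as the paper. The paper obtains $|F'|$ as $n$ minus the total gap size and uses oddness of the arcs to get $\nu(\cn[F'])=\sum_i\lfloor m_i/2\rfloor=q-1<q$, then concludes by Corollary~\ref{cor:wheeldecomp}(a). Where it differs, you improve on it: you explicitly check that the arcs and gaps tile the $n$ vertices exactly and that the arcs are separate components, whereas the paper simply asserts that they cover the cycle.
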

\begin{proof}
	The arcs and gaps together cover all $n$ vertices, so $|F'|=n-(\text{total gap size})$. The total gap size is $t$ when $n$ is even and $t+1$ when $n$ is odd, and substituting $t=\lfloor n/2\rfloor-q+1$ into either case gives
	\[
	|F'| = \Big\lfloor\frac n2\Big\rfloor+q-1.
	\]
	Since every $m_i$ is odd,
	\[
	\nu(\cn[F']) = \sum_i\Big\lfloor\frac{m_i}2\Big\rfloor = q-1 < q,
	\]
	so $F'\in\Delta_q(\wn)$ by Corollary \ref{cor:wheeldecomp}(a).
\end{proof}

\section{The top squarefree power, and depth beyond it}\label{sec:data}

At $q=\nu(\wn)$ the ideal $I(\wn)^{[q]}$ turns out to have a very simple description, and identifying it settles every classical invariant of $R/I(\wn)^{[q]}$ immediately. The Cohen-Macaulayness criterion we use for this is Ficarra and Moradi's, Theorem \ref{thm:fm}, what we contribute is the verification that its hypothesis holds for every wheel graph, regardless of the parity of $n$.

\begin{theorem}\label{thm:top}
	For every $n\ge3$, with $q=\nu(\wn)=\lceil n/2\rceil$,
	\[
	I(\wn)^{[q]} = \mathfrak m^{[2q]}, \qquad \mathfrak m = (x_1,\ldots,x_n,v),
	\]
	so $R/I(\wn)^{[q]}$ is Cohen-Macaulay, with
	\[
	\dimn = \depth = \reg\big(R/I(\wn)^{[q]}\big) = 2q-1 = 2\Big\lceil\frac n2\Big\rceil-1,
	\]
	\[
	\pd\big(R/I(\wn)^{[q]}\big) = n-2\Big\lceil\frac n2\Big\rceil+2 = \begin{cases}1,& n\text{ odd},\\ 2,& n\text{ even}.\end{cases}
	\]
\end{theorem}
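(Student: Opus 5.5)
The plan is to verify condition (c) of Theorem \ref{thm:fm} for $\wn$, read off (b), and then apply Theorem \ref{thm:veronese}. The wheel has $n+1$ vertices and none is isolated, so Theorem \ref{thm:fm} applies. We split by the parity of $n$.

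If $n$ is odd, then $n+1$ is even. A perfect matching of $\wn$ is obtained from a near-perfect matching of the path $\cn-\{x_n\}$ (a path on $n-1$ vertices, which is even) together with the edge $\{v,x_n\}$. So $\wn$ has a perfect matching, and $\nu(\wn)=(n+1)/2=\lceil n/2\rceil$, in agreement with Corollary \ref{cor:nuwn}.

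If $n$ is even, then $n+1$ is odd, and we must show that $\wn\setminus\{u\}$ has a perfect matching for every vertex $u$. For $u=v$ the remainder is $\cn$, and an even cycle has a perfect matching. For $u=x_i$ the remainder is the cone over the path $\cn-\{x_i\}$, which has $n-1$ vertices, an odd number. Deleting $x_{i+1}$ from this path leaves a path on $n-2$ vertices, which is even and so has a perfect matching; adding the edge $\{v,x_{i+1}\}$ completes a perfect matching. (Alternatively, apply Corollary \ref{cor:cone} or Theorem \ref{thm:joinmatching} to this join, noting that $\lfloor n/2\rfloor = n/2$ is the required size.)

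In both cases (c) holds, so by (b) we get $I(\wn)^{[q]}=\mathfrak m^{[2q]}$ in $\Bbbk[x_1,\dots,x_n,v]$. Equivalently, $\nu(\wn)=\lfloor (n+1)/2\rfloor=\lceil n/2\rceil=q$, which matches Corollary \ref{cor:nuwn}. Theorem \ref{thm:veronese}, applied with $k=2q$ in a ring of $n+1$ variables, gives Cohen-Macaulayness and $\dimn=\depth=\reg=2q-1$. By Auslander-Buchsbaum, $\pd=(n+1)-(2q-1)=n-2\lceil n/2\rceil+2$, which is $1$ for odd $n$ and $2$ for even $n$.

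One could also bypass Theorem \ref{thm:fm} with a direct check. Every $2q$-subset $F$ of the vertices induces a subgraph with a perfect matching: $F$ misses exactly one vertex if $n$ is odd and exactly one or two vertices if $n$ is even. Its generators therefore exhaust the degree-$2q$ squarefree monomials. The only delicate point is the even case with $v\in F$ and two cycle vertices missing. There we take Corollary \ref{cor:wheeldecomp}(b) with $F'$ a union of at most two paths, of total size $n-2$, which is even. Either both paths are even, giving a perfect matching outright, or both are odd, in which case we match $v$ to an endpoint of one of them. No step is really an obstacle; the main care is this parity bookkeeping in the even case.
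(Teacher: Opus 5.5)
Your main argument is correct and is essentially the paper's proof: you verify condition (c) of Theorem \ref{thm:fm} by the parity of $n$, exhibiting the perfect matchings explicitly where the paper cites Corollaries \ref{cor:nuwn} and \ref{cor:cone}, and then apply Theorem \ref{thm:veronese} and Auslander--Buchsbaum. Your optional direct check miscounts, however. Since $2q=n+1$ for $n$ odd and $2q=n$ for $n$ even, a $2q$-subset misses no vertex or exactly one vertex respectively, so the ``delicate'' case with $v\in F$ and $|F'|=n-2$ never occurs (such an $F$ has size $n-1=2q-1$). Once the count is fixed, the direct check is exactly the verification of (c) you already carried out, which shows that Theorem \ref{thm:fm} is not actually needed for this statement.
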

\begin{proof}
	By Theorem \ref{thm:fm} it suffices to check condition (c) i.e, either $\wn$ has a perfect matching, or $\wn\setminus\{i\}$ has a perfect matching for every $i\in V(\wn)$.

	\emph{Case 1. $n$ odd.} Then $|V(\wn)|=n+1$ is even, and by Corollary \ref{cor:nuwn},
	\[
	\nu(\wn)=\Big\lceil\frac n2\Big\rceil=\frac{n+1}2=\frac{|V(\wn)|}2,
	\]
	so $\wn$ has a perfect matching, and condition (c) holds via its first alternative.

	\emph{Case 2. $n$ even.} Then $|V(\wn)|=n+1$ is odd, so $\wn$ itself has no perfect matching, and we check $\wn\setminus\{i\}$ for every $i$.
	\begin{itemize}
		\item $i=v$: $\wn\setminus\{v\}=\cn$, and $\nu(\cn)=n/2=|V(\cn)|/2$ since $n$ is even, so $\cn$ has a perfect matching.
		\item $i=x_j$ for a vertex $x_j$ of $U$: here $\wn\setminus\{x_j\}$ is the cone $P_{n-1}\ast\mathcal K_1$ over the path on the remaining $n-1$ vertices of $U$. Since $n$ is even, $n-1$ is odd, so $P_{n-1}$ has no perfect matching, and Corollary \ref{cor:cone} gives
		\[
		\nu\big(P_{n-1}\ast\mathcal K_1\big)=\nu(P_{n-1})+1=\Big\lfloor\frac{n-1}2\Big\rfloor+1=\frac n2=\frac{|V(\wn\setminus\{x_j\})|}2,
		\]
		a perfect matching. By the rotational symmetry of $\wn$, this holds for every vertex of $U$.
	\end{itemize}
	So condition (c) holds via its second alternative in this case too. By Theorem \ref{thm:fm}(a)$\Leftrightarrow$(b), $I(\wn)^{[\nu(\wn)]}=\mathfrak m^{[2\nu(\wn)]}$. Since
	\[
	2q = 2\Big\lceil\frac n2\Big\rceil \le n+1,
	\]
	Theorem \ref{thm:veronese} applies with $k=2q$ in the $n+1$ variables of $S$, giving the stated dimension, depth and regularity. The projective dimension then follows from the Auslander-Buchsbaum formula.
\end{proof}

We verified Theorem \ref{thm:top} independently for $3\le n\le10$: in every case, direct computation confirmed both that $\wn$ satisfies condition (c) of Theorem \ref{thm:fm} and that every squarefree monomial of degree $2\nu(\wn)$ is a minimal generator of $I(\wn)^{[\nu(\wn)]}$.

We computed, by direct evaluation of \eqref{eq:hochq} using exact rational linear algebra on the boundary maps of $\Delta_q[W]$ for every relevant $W$, the complete graded Betti numbers of $R/I(\wn)^{[q]}$ and $R/I(\cn)^{[q]}$ for $5\le n\le9$, $q\le2$, and also $q=3$ for $n=6,7$, together with the top-power case $q=\nu(\wn)$ for $n=5,\ldots,8$. Table \ref{tab:data} records regularity, projective dimension and depth, alongside the common dimension from Theorem \ref{thm:dim} and Proposition \ref{prop:cycledim}.

\begin{table}[H]
	\centering
	\begin{tabular}{ccc|ccc|ccc}
		\hline
		$n$ & $q$ & $\dimn$ & $\reg(\cn)$ & $\pd(\cn)$ & $\depth(\cn)$ & $\reg(\wn)$ & $\pd(\wn)$ & $\depth(\wn)$\\
		\hline
		5&1&2&2&3&2&2&5&1\\
		5&2&3&3&2&3&3&3&3\\
		5&3 ($=\nu$)&5&--&--&--&5&1&5\\
		6&1&3&2&4&2&2&6&1\\
		6&2&4&3&3&3&3&4&3\\
		6&3 ($=\nu$)&5&--&--&--&5&2&5\\
		7&1&3&2&5&2&2&7&1\\
		7&2&4&4&3&4&4&5&3\\
		7&4 ($=\nu$)&7&--&--&--&7&1&7\\
		8&1&4&3&5&3&3&8&1\\
		8&2&5&4&4&4&4&6&3\\
		8&4 ($=\nu$)&7&--&--&--&7&2&7\\
		9&1&4&3&6&3&3&9&1\\
		\hline
	\end{tabular}
	\caption{Comparison of $R/I(\cn)^{[q]}$ and $R/I(\wn)^{[q]}$. Rows marked ($=\nu$) are the top power, resolved exactly by Theorem \ref{thm:top}; all other wheel entries were computed directly, independently of Theorem \ref{thm:top}, and agree with it at the boundary.}
	\label{tab:data}
\end{table}

As a check on these computations, the $q=1$ column $\reg(R/I(\cn))$ agrees, for every $n$ listed, with the classical closed form $\reg(R/I(\cn))=\lfloor(n+1)/3\rfloor$ for cycle edge ideals \cite{V1,DRS}, which corroborates the computational results before it is applied to the less classical wheel columns.

Table \ref{tab:data} shows that dimension is blind to $v$, by Proposition \ref{prop:cycledim}, while depth is not. The cycle attains $\depth=\dimn$ only occasionally in our range, consistent with the classical fact that $\cn$'s edge ring is Cohen-Macaulay only for $n=3,5$ \cite{V1}, while $\wn$ fails to be Cohen-Macaulay at $q=1$ for $n\ge6$ yet, by Theorem \ref{thm:top}, is always Cohen-Macaulay at the opposite extreme $q=\nu(\wn)$. Restricting to $\wn$ alone,
\[
\depth\big(R/I(\wn)^{[q]}\big) = \underbrace{1,1,1,1,1}_{q=1,\ n=5,\ldots,9}\ ,\quad \underbrace{3,3,3,3,3}_{q=2,\ n=5,\ldots,9}\ ,\quad \underbrace{5,5}_{q=3,\ n=6,7}\ ,\quad \underbrace{5,5,7,7}_{q=\nu(\wn),\ n=5,6,7,8},
\]
and every value equals $2q-1$, the last group now a consequence of Theorem \ref{thm:top} rather than mere computation. We now show this pattern extends to the entire range $1\le q\le\nu(\wn)$. Theorem \ref{thm:conedepth} converts it, for $q\le\lfloor n/2\rfloor$, into two statements about the cycle $\cn$ alone.

\begin{proposition}\label{prop:reduction}
	Suppose that, for given $n,q$ with $1\le q\le\lfloor n/2\rfloor$,
	\begin{itemize}
		\item[(A)] $\depth_R\big(R/J_{\cn,q}\big) = 2q-2$, and
		\item[(B)] $\depth_R\big(R/I(\cn)^{[q]}\big) \ge 2q-1$,
	\end{itemize}
	both hold, in the notation of Definition \ref{def:hubideal} applied to $G=\cn$. Then $\depth(R/I(\wn)^{[q]})=2q-1$ for this $(n,q)$.
\end{proposition}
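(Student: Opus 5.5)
This is a direct application of Theorem \ref{thm:conedepth} with $G=\cn$, so that $\mathrm{Cone}(\cn)=\wn$ and $S=R[v]$ is the polynomial ring on $V(\wn)$. In that notation set
\[
d_1=\depth_R\big(R/I(\cn)^{[q]}\big),\qquad d_2=\depth_R\big(R/J_{\cn,q}\big).
\]
Hypothesis (A) gives $d_2=2q-2$, and hypothesis (B) gives $d_1\ge 2q-1>2q-2=d_2$. So the strict inequality $d_1>d_2$ required by Theorem \ref{thm:conedepth} holds. The theorem then yields
\[
\depth_S\big(S/I(\wn)^{[q]}\big)=d_2+1=2q-1.
\]

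The only bookkeeping points to check are the following.

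First, the range $1\le q\le\lfloor n/2\rfloor$ guarantees that $I(\cn)^{[q]}$ is a nonzero proper ideal, since $\cn$ has a $q$-matching. By Lemma \ref{lem:intersecteqI1}, $J_{\cn,q}\supseteq I(\cn)^{[q]}$ is then nonzero too. This is what facts (i) and (ii) in the proof of Theorem \ref{thm:conedepth} need.

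Second, the decomposition \eqref{eq:conesplit} and Lemma \ref{lem:intersecteqI1} are stated for an arbitrary graph $G$, so they apply to $\cn$ without change.

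Third, the ring written $R$ in the conclusion for $\wn$ is the ring $S$ of Theorem \ref{thm:conedepth}, following the paper's convention that $R$ denotes the polynomial ring on the vertex set of the graph under discussion.

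There is no real obstacle in this reduction. All the substance lies in verifying (A) and (B), which the proposition takes as hypotheses and which are deferred to later results in the paper. The one point to watch is that (B) only needs the inequality $\ge$: Theorem \ref{thm:conedepth} uses nothing about $d_1$ beyond $d_1>d_2$.
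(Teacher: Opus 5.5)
Your proof is correct and is the paper's argument: (A) and (B) give $d_1\ge 2q-1>2q-2=d_2$, and Theorem \ref{thm:conedepth} with $G=\cn$ then yields $\depth = d_2+1 = 2q-1$. Your bookkeeping remarks are sound additions that the paper leaves implicit, namely that $I(\cn)^{[q]}$ and $J_{\cn,q}$ are nonzero proper ideals and that the ring attached to $\wn$ is $S=R[v]$.
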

\begin{proof}
	Write $d_1=\depth_R(R/I(\cn)^{[q]})$ and $d_2=\depth_R(R/J_{\cn,q})$. Conditions (A) and (B) give
	\[
	d_1\ge2q-1>2q-2=d_2,
	\]
	so Theorem \ref{thm:conedepth}, applied with $G=\cn$ and $\mathrm{Cone}(\cn)=\wn$, gives $\depth(R/I(\wn)^{[q]})=d_2+1=2q-1$.
\end{proof}

Both hypotheses of Proposition \ref{prop:reduction} turn out to hold unconditionally, by a general lower bound on the depth of a squarefree monomial ideal in terms of its generator degrees, due to Erey, Herzog, Hibi and Saeedi Madani.

\begin{lemma}[\cite{EHHM2}]\label{lem:degreebound}
	Let $I\subseteq R=\Bbbk[x_1,\ldots,x_N]$ be a nonzero squarefree monomial ideal all of whose minimal generators have degree at least $d$. Then $\depth_R(R/I)\ge d-1$.
\end{lemma}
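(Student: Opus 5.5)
The lemma to prove: if every minimal generator of the squarefree monomial ideal $I$ has degree at least $d$, then $\depth_R(R/I)\ge d-1$. I would argue through the Stanley-Reisner complex $\Delta$ with $I=I_\Delta$. Since no generator has degree below $d$, every subset of $[N]$ of size at most $d-1$ avoids all generators and is therefore a face. Hence $\Delta$ contains the full $(d-2)$-skeleton of the simplex on $[N]$. The goal becomes: a complex containing the complete $(d-2)$-skeleton has depth at least $d-1$.

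\emph{Route 1: skeleta.} For any $\Delta$, $\depth \Bbbk[\Delta]$ equals $1+\max\{i:\Delta^{(i)}\text{ is Cohen-Macaulay}\}$, where $\Delta^{(i)}$ is the $i$-skeleton. This is Hibi's result, which also follows from Hochster's formula for local cohomology. Here $\Delta^{(d-2)}$ is the $(d-2)$-skeleton of a simplex, and its Stanley-Reisner ring is $R/\mathfrak m^{[d]}$. That ring is Cohen-Macaulay by Theorem \ref{thm:veronese}, so the depth is at least $(d-2)+1=d-1$.

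\emph{Route 2: Hochster's formula, if I want to avoid citing the skeleton result.} By Auslander-Buchsbaum it suffices to show $\pd(R/I)\le N-d+1$. Using \eqref{eq:hochq}, take $\beta_{i,j}\neq0$. Then there is some $W$ with $|W|=j$ and $\widetilde H_{j-i-1}(\Delta[W])\ne0$.
\begin{itemize}
\item Since $\Delta[W]$ contains the full $(d-2)$-skeleton of the simplex on $W$, its reduced homology vanishes in degrees $\le d-3$. Nonvanishing therefore forces $j-i-1\ge d-2$.
\item Since $j\le N$, this gives $i\le j-d+1\le N-d+1$.
\end{itemize}
Two edge cases need a word. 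If $|W|\le d-1$, then $\Delta[W]$ is a full simplex and is acyclic, so it contributes nothing. Also $i=0$ needs no check, because $N-d+1\ge0$: indeed $d\le N$, since $I\neq0$ has a generator of degree at most $N$. Hence $\depth(R/I)=N-\pd\ge d-1$.

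\emph{Main obstacle.} The key step is the homological vanishing: a complex containing the full $k$-skeleton of a simplex has $\widetilde H_m=0$ for all $m<k$. I would justify it through cellular chains. The chain complexes of $\Delta[W]$ and of the full simplex on $W$ agree in degrees $\le k$, and their boundary maps agree there. So the homology in degrees $<k$ is the same as for the full simplex, which is zero.

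With that in place, everything else is bookkeeping of indices in Hochster's formula.
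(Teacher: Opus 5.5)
Your proposal is correct, and both routes work. Route 2 has the same outer shape as the paper's proof. Both show that a nonzero $\beta_{i,j}(R/I)$ with $i\ge1$ forces $i+d-1\le j\le N$, deduce $\pd_R(R/I)\le N-d+1$, and finish with Auslander--Buchsbaum.

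The difference is in how the lower bound $j\ge i+d-1$ is obtained. The paper gets it algebraically from minimality of the graded resolution. The smallest degree of a basis element of $F_i$ strictly exceeds that of $F_{i-1}$, so by induction from $F_1$ it is at least $d+i-1$. Squarefreeness is used only to get $j\le N$. That step holds for any homogeneous ideal of initial degree at least $d$.

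You instead read both constraints off Hochster's formula \eqref{eq:hochq}. You observe that $\Delta[W]$ contains the full $(d-2)$-skeleton of the simplex on $W$. Hence its augmented chain complex agrees with that of an acyclic simplex in degrees $\le d-2$, and $\widetilde H_m(\Delta[W])=0$ for $m\le d-3$. This needs $W\neq\emptyset$, which is automatic once $i\ge1$. Your version stays inside the combinatorial framework the paper already set up, and makes explicit that the lemma is really a statement about complexes containing a complete skeleton.

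Route 1 is genuinely different. Given Hibi's skeleton characterization of depth, it is a one-line argument. It also ties the lemma directly to Theorem \ref{thm:veronese}, because $\Delta^{(d-2)}$ is exactly the Stanley--Reisner complex of $\mathfrak m^{[d]}$ when $d\le N$. The price is reliance on a substantially heavier external theorem than the elementary resolution argument the paper uses.
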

\begin{proof}
	We include the short argument for completeness. Let $\cdots\to F_1\to F_0\to R/I\to0$ be the minimal graded free resolution of $R/I$, with $F_0=R$ generated in degree $0$. Minimality means every entry of every differential lies in the maximal ideal, so for every $i\ge1$ the generators of $F_i$ live in strictly larger degree than those of $F_{i-1}$. The generators of $F_1$ are the minimal generators of $I$ itself, of degree at least $d$ by hypothesis, so $\beta_{1,j}(R/I)=0$ for $j<d$. Inductively,
	\[
	\beta_{i,j}(R/I) = 0 \qquad\text{whenever } j < d+i-1, \qquad\text{for every } i\ge1.
	\]
	Since $R/I$ is a squarefree quotient in $N$ variables, every $j$ with $\beta_{i,j}\ne0$ satisfies $j\le N$. A nonzero $\beta_{i,j}$ with $i\ge1$ therefore forces $d+i-1\le j\le N$, giving $i\le N-d+1$. Hence
	\[
	\pd_R(R/I) \le N-d+1,
	\]
	and the Auslander-Buchsbaum formula gives $\depth_R(R/I) = N-\pd_R(R/I) \ge d-1$.
\end{proof}

Both $I(\cn)^{[q]}$ and $J_{\cn,q}$ have every minimal generator of a single degree, $2q$ for the former, since a $q$-matching covers $2q$ vertices, and $2q-1$ for the latter, since a generator $x_j\cdot x_{V(M')}$ combines one vertex from $x_j$ with the $2(q-1)$ vertices of the matching $M'$. Lemma \ref{lem:degreebound} therefore gives, for every $n\ge3$ and $1\le q\le\lfloor n/2\rfloor$,
\[
\depth_R\big(R/I(\cn)^{[q]}\big) \ge 2q-1, \qquad \depth_R\big(R/J_{\cn,q}\big) \ge 2q-2.
\]
The first inequality is exactly statement (B). For statement (A), it remains to bound $\depth_R(R/J_{\cn,q})$ from above.

\begin{theorem}\label{thm:jbound}
	For all $n\ge3$ and $1\le q\le\lfloor n/2\rfloor$, $\depth_R\big(R/J_{\cn,q}\big) \le 2q-2$.
\end{theorem}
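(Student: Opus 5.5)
The plan is to bound $\depth_R(R/J_{\cn,q})$ from above by the size of a single small facet of the complex $\Delta(J_{\cn,q})$. This is the ``explicit facet argument'' promised in the introduction. The general fact I will use is standard: for a finitely generated graded module $M$, one has $\depth M\le\dim R/P$ for every associated prime $P$ of $M$. For a Stanley-Reisner ring $R/I_\Delta$, the associated primes are exactly the minimal primes $P_F=(x_i : i\notin F)$, one for each facet $F$ of $\Delta$, and $\dim R/P_F=|F|$. Hence
\[
\depth_R\big(R/I_\Delta\big)\le\min\{|F| : F \text{ a facet of }\Delta\}.
\]
We showed after Lemma \ref{lem:intersecteqI1} that $J_{\cn,q}$ is the Stanley-Reisner ideal of $\Delta(J_{\cn,q})$. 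So it suffices to exhibit a facet of this complex with exactly $2q-2$ vertices.

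By Corollary \ref{cor:wheeldecomp}(b), a set $F'\subseteq U$ is a face of $\Delta(J_{\cn,q})$ precisely when one of two conditions holds:
\begin{itemize}
\item[] $\cn[F']$ has a perfect matching and $\nu(\cn[F'])\le q-1$; or
\item[] $\cn[F']$ has no perfect matching and $\nu(\cn[F'])\le q-2$.
\end{itemize}
For the candidate facet, take $F'$ to be a single arc of $2q-2$ consecutive vertices of $\cn$. For $q=1$ this is $F'=\emptyset$. Since $2q-2\le n-2$, the complementary gap has at least two vertices. The arc is an even path, so it has a perfect matching with $\nu=q-1$, and $F'$ is a face by the first condition.

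To prove maximality, add any vertex $x\in U\setminus F'$ and check that both conditions fail for $F'\cup\{x\}$. There are two cases.
\begin{itemize}
\item[] If $x$ is adjacent to an endpoint of the arc, then $F'\cup\{x\}$ is an odd arc of length $2q-1$. Since the gap has size at least $2$, this arc does not close up into the whole cycle. It has no perfect matching and matching number $q-1$, which is not $\le q-2$.
\item[] If $x$ is not adjacent to the arc, then $\cn[F'\cup\{x\}]$ is the even arc together with an isolated vertex. Again there is no perfect matching, and the matching number is $q-1$.
\end{itemize}
The case $q=1$ is included: then $\{x\}$ is an isolated vertex, and $\nu=0\not\le-1$. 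In every case $F'\cup\{x\}\notin\Delta(J_{\cn,q})$, so $F'$ is a facet of size $2q-2$. This gives $\depth_R(R/J_{\cn,q})\le 2q-2$. For $q=1$ the bound is also directly consistent with $J_{\cn,1}=\mathfrak m$, whose quotient has depth $0$.

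The only delicate point is the maximality check, specifically making sure the enlarged arc does not wrap around the cycle; this is exactly what the hypothesis $q\le\lfloor n/2\rfloor$ secures through $2q-2\le n-2$. Combined with the lower bound $\depth_R(R/J_{\cn,q})\ge 2q-2$ from Lemma \ref{lem:degreebound}, this gives statement (A) of Proposition \ref{prop:reduction}.
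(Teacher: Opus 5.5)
Your proof is correct and follows the paper's argument essentially verbatim. You exhibit a single even arc of length $2q-2$ as a facet of $\Delta(J_{\cn,q})$, using the same two extension cases (adjacent vertex versus non-adjacent vertex), and then bound the depth by $\dimn(R/P)$ for the corresponding minimal, hence associated, prime. The only cosmetic difference is that you absorb $q=1$ into the facet argument with $F'=\emptyset$, whereas the paper treats $q=1$ separately via $J_{\cn,1}=\mathfrak m$.
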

\begin{proof}
	For $q=1$, $J_{\cn,1}=\mathfrak m$, and
	\[
	\depth_R(R/\mathfrak m) = 0 = 2(1)-2,
	\]
	so the bound holds with equality. Assume from now on that $q\ge2$.

	Let $F'\subseteq U$ be a single arc of even length $2q-2$. Since $q\le\lfloor n/2\rfloor$ gives $2q\le n$, the single gap left over has size
	\[
	n-(2q-2) \ge 2,
	\]
	so $F'$ is a genuine proper subset of the cycle, with at least two vertices to spare. We show $F'$ is a facet of the complex $\Delta(J_{\cn,q})$ identified above i.e,  it satisfies the membership condition of Corollary \ref{cor:wheeldecomp}(b), which by that identification is exactly membership in $\Delta(J_{\cn,q})$, and no further vertex can be added to it.

	Since $\cn[F']$ is a path on $2q-2$ vertices, it carries a perfect matching, and
	\[
	\nu(\cn[F']) = q-1,
	\]
	meeting the bound $\nu\le q-1$ of Corollary \ref{cor:wheeldecomp}(b) with equality. Take any vertex $c\notin F'$. Exactly two vertices of the gap are adjacent to $F'$, one at each end of the arc, and every other gap vertex is not.

	\emph{Case 1. $c$ adjacent to $F'$.} Adding $c$ extends the arc to length $2q-1$, which is odd, so
	\[
	\nu\big(\cn[F'\cup\{c\}]\big) = \Big\lfloor\frac{2q-1}2\Big\rfloor = q-1
	\]
	is unchanged, but an arc of odd length has no perfect matching. Membership of $F'\cup\{c\}$ would then require $\nu\le q-2$ by the other alternative of Corollary \ref{cor:wheeldecomp}(b), and this fails since $q-1>q-2$.

	\emph{Case 2. $c$ not adjacent to $F'$.} Now $\cn[F'\cup\{c\}]$ is the even arc together with an isolated vertex, so again
	\[
	\nu\big(\cn[F'\cup\{c\}]\big) = q-1,
	\]
	while the total vertex count $|F'\cup\{c\}|=2q-1$ is odd, so this set can never carry a perfect matching, whatever its internal structure. Membership again requires $\nu\le q-2$, which fails as before.

	Neither case extends $F'$, so $F'$ is indeed a facet. Minimal primes of a squarefree monomial ideal correspond exactly to complements of facets of its complex, so
	\[
	P := (x_i : i\notin F')
	\]
	is a minimal prime of $R/J_{\cn,q}$, of codimension $|V\setminus F'| = n-(2q-2)$. Every minimal prime of a finitely generated module is associated to it, so $P\in\mathrm{Ass}_R(R/J_{\cn,q})$, and the standard inequality $\depth_R M\le\dimn(R/\mathfrak p)$ for every $\mathfrak p\in\mathrm{Ass}_R(M)$ (see, e.g., \cite{herhib}) gives
	\[
	\depth_R\big(R/J_{\cn,q}\big) \le \dimn(R/P) = n - \big(n-(2q-2)\big) = 2q-2. \qedhere
	\]
\end{proof}

Combined with the lower bound $\depth_R(R/J_{\cn,q})\ge2q-2$ already obtained from Lemma \ref{lem:degreebound}, Theorem \ref{thm:jbound} establishes statement (A) exactly, $\depth_R(R/J_{\cn,q})=2q-2$ for every $n\ge3$ and $1\le q\le\lfloor n/2\rfloor$. Both hypotheses of Proposition \ref{prop:reduction} therefore hold unconditionally, with no computation required, and combining this with Theorem \ref{thm:top} at the top power gives the complete depth formula for wheels.

\begin{theorem}\label{thm:depthfull}
	For every $n\ge3$ and every $1\le q\le\nu(\wn)$,
	\[
	\depth\big(R/I(\wn)^{[q]}\big) = 2q-1.
	\]
\end{theorem}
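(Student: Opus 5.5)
The argument is a case split on $q$. First treat $1\le q\le\lfloor n/2\rfloor$ with Proposition \ref{prop:reduction}, then treat the remaining values up to $\nu(\wn)=\lceil n/2\rceil$ with Theorem \ref{thm:top}. Since $\lceil n/2\rceil-\lfloor n/2\rfloor\in\{0,1\}$, the only value possibly left over after the first range is $q=\lceil n/2\rceil$ with $n$ odd. That value is the top power, so the two ranges together cover every $q$.

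\emph{Range $1\le q\le\lfloor n/2\rfloor$.} We verify hypotheses (A) and (B) of Proposition \ref{prop:reduction}.
\begin{enumerate}
\item[(B)] $I(\cn)^{[q]}$ is nonzero, because $\nu(\cn)=\lfloor n/2\rfloor\ge q$, and it is generated in degree $2q$. Lemma \ref{lem:degreebound} then gives $\depth_R(R/I(\cn)^{[q]})\ge 2q-1$.
\item[(A)] $J_{\cn,q}$ is nonzero: it contains $I(\cn)^{[q]}$ by Lemma \ref{lem:intersecteqI1}, and for $q=1$ it equals $\mathfrak m$. It is generated in degree $2q-1$, so Lemma \ref{lem:degreebound} gives $\depth_R(R/J_{\cn,q})\ge 2q-2$. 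Theorem \ref{thm:jbound} gives the reverse inequality, so the depth equals $2q-2$.
\end{enumerate}
With $d_1\ge 2q-1>2q-2=d_2$, Proposition \ref{prop:reduction} (that is, Theorem \ref{thm:conedepth} with $G=\cn$) yields $\depth(R/I(\wn)^{[q]})=2q-1$.

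\emph{Case $q=\nu(\wn)=\lceil n/2\rceil$.} Theorem \ref{thm:top} identifies $I(\wn)^{[q]}$ with $\mathfrak m^{[2q]}$ in $n+1$ variables, which is valid since $2q\le n+1$. Its quotient is Cohen--Macaulay of depth $2q-1$ by Theorem \ref{thm:veronese}. For $n$ even this $q$ equals $\lfloor n/2\rfloor$ and already lies in the first range, and both computations give $2q-1$, so they agree.

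The main obstacle is only bookkeeping. The proof must use the polynomial ring $S=R[v]$ in the wheel statement, and it must check that the first range never requires $q>\lfloor n/2\rfloor$, which is where Lemma \ref{lem:arcpm} and Theorem \ref{thm:jbound} need $2q\le n$. With that checked, every step is a direct citation.
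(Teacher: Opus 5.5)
Your proposal is correct and is essentially the paper's own proof. For $1\le q\le\lfloor n/2\rfloor$ you verify (A) and (B) of Proposition \ref{prop:reduction} via Lemma \ref{lem:degreebound} and Theorem \ref{thm:jbound}, and you handle the single leftover value $q=(n+1)/2$ for odd $n$ by Theorem \ref{thm:top}, exactly as the paper does. Your extra checks, that $I(\cn)^{[q]}$ and $J_{\cn,q}$ are nonzero (as Lemma \ref{lem:degreebound} requires) and that the two computations agree at $q=n/2$ for even $n$, are harmless additions the paper leaves implicit.
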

\begin{proof}
	For $q\le\lfloor n/2\rfloor$, Proposition \ref{prop:reduction} applies, since statements (A) and (B) hold unconditionally by the discussion above. For $n$ even this already covers the full range, since $\nu(\wn)=n/2=\lfloor n/2\rfloor$. For $n$ odd, the single remaining value $q=\nu(\wn)=(n+1)/2$ lies outside the range $q\le\lfloor n/2\rfloor$ and is covered instead directly by Theorem \ref{thm:top}.
\end{proof}

As an independent check, we also verified statements (A) and (B) computationally, using \texttt{Macaulay2} over the field $\Bbbk=\mathbb Q$, for every $n$ from $3$ to $13$ and every valid $q$, a total of $41$ cases; both held in every case, exactly as Theorem \ref{thm:depthfull} now guarantees. Table \ref{tab:reduction} records a representative sample.

\begin{table}[H]
	\centering
	\begin{tabular}{cc|cc|cc}
		\hline
		$n$ & $q$ & $\depth_R(R/I(\cn)^{[q]})$ & $2q-1$ & $\depth_R(R/J_{\cn,q})$ & $2q-2$ \\
		\hline
		8 & 4 & 7 & 7 & 6 & 6 \\
		9 & 4 & 7 & 7 & 6 & 6 \\
		10 & 3 & 6 & 5 & 4 & 4 \\
		11 & 4 & 7 & 7 & 6 & 6 \\
		12 & 5 & 9 & 9 & 8 & 8 \\
		13 & 6 & 11 & 11 & 10 & 10 \\
		\hline
	\end{tabular}
	\caption{Sample of the $41$ verified instances, covering every $n$ from $3$ to $13$ and every valid $q$.}
	\label{tab:reduction}
\end{table}

A closed formula for $\reg(R/I(\wn)^{[q]})$ remains open for $1<q<\nu(\wn)$, at the two endpoints $q=1$ and $q=\nu(\wn)$ it is already known exactly.

\section{Further applications of the join-matching theorem}\label{sec:apps}

Theorems \ref{thm:joinmatching} and \ref{thm:gendecomp} say nothing about cycles or wheels in particular. They hold for arbitrary graphs $G,H$. This section applies these results to five further families that are themselves graph joins namely cone graphs, fan graphs, generalized wheel graphs, complete split graphs and friendship graphs. We state exactly what is proved in each case, and claim a closed-form formula only where we have a complete proof.

\subsection{Cone graphs}\label{ssec:cone}

\begin{definition}
	For a graph $G$, the \emph{cone} $\mathrm{Cone}(G):=G\ast\mathcal K_1$ is $G$ together with one new vertex $v$ adjacent to every vertex of $G$.
\end{definition}

Corollary \ref{cor:cone} already gives $\nu(\mathrm{Cone}(G))$ for arbitrary $G$. Theorem \ref{thm:gendecomp} with $H=\mathcal K_1$ gives the corresponding membership criterion for \emph{any} $G$, not only $G=\cn$:

\begin{corollary}\label{cor:conedecomp}
	Let $G$ be any graph, $F'\subseteq V(G)$, and $F=F'$ or $F=F'\cup\{v\}$ in $\mathrm{Cone}(G)$. Then:
	\begin{enumerate}
		\item[(a)] if $v\notin F$: $F\in\Delta_q(\mathrm{Cone}(G))\iff\nu(G[F'])<q$;
		\item[(b)] if $v\in F$: $F\in\Delta_q(\mathrm{Cone}(G))\iff \nu(G[F'])<q-1$ when $G[F']$ has no perfect matching, and $\nu(G[F'])<q$ when it does.
	\end{enumerate}
\end{corollary}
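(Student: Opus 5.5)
The plan is to repeat the proof of Corollary \ref{cor:wheeldecomp} word for word, with the cycle replaced by an arbitrary graph $G$. That proof never used any property of $\cn$ beyond the general inequality $\nu\le\lfloor|V|/2\rfloor$ and the parity of the deficiency. Concretely, apply Theorem \ref{thm:gendecomp} with $H=\mathcal K_1$ on the vertex $v$ and $F_G=F'$, and write $p:=\nu(G[F'])$ and $\delta:=d(G[F'])=|F'|-2p$.

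\emph{Case (a), $v\notin F$.} Here $F_H=\emptyset$, so the three terms of Theorem \ref{thm:gendecomp} are $\lfloor|F'|/2\rfloor$, $|F'|$ and $p$. Since $p\le\lfloor|F'|/2\rfloor\le|F'|$, the minimum is $p$. Membership is therefore $p<q$, which is statement (a).

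\emph{Case (b), $v\in F$.} Here $F_H=\{v\}$, so the terms are $\lceil|F'|/2\rceil$, $|F'|+\nu(\mathcal K_1)=|F'|$ and $1+p$. The middle term never falls below $\lceil|F'|/2\rceil$, so we only need to compare $\lceil|F'|/2\rceil$ with $1+p$, and the deficiency does this. Note that $\delta\equiv|F'|\pmod 2$.
\begin{itemize}
\item[] If $G[F']$ has a perfect matching, then $\delta=0$, $|F'|$ is even, and $\lceil|F'|/2\rceil=p<1+p$. The minimum is $p$, so membership is $p<q$.
\item[] If $G[F']$ has no perfect matching, then $\delta\ge1$, and by parity $\delta\ge2$ when $|F'|$ is even. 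In both parities this gives $1+p=1+(|F'|-\delta)/2\le\lceil|F'|/2\rceil$. The minimum is $1+p$, so membership is $1+p<q$, that is $p<q-1$.
\end{itemize}

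There is no genuine obstacle; the argument is a direct specialization of Theorem \ref{thm:gendecomp}. The only points needing care are the parity bookkeeping in the no-perfect-matching subcase and the degenerate case $F'=\emptyset$. The empty graph counts as having a perfect matching, so $F=\{v\}$ lies in $\Delta_q$ exactly when $0<q$, which is consistent with the formula.
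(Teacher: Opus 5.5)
Your proposal is correct and follows the paper's own argument: the paper proves this corollary by applying Theorem \ref{thm:gendecomp} with $H=\mathcal K_1$, exactly as in Corollary \ref{cor:wheeldecomp}, and noting that this argument never uses $G=\cn$. You have written that specialization out in full, with the parity bookkeeping and the $F'=\emptyset$ edge case handled correctly.
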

\begin{proof}
	Immediate from Theorem \ref{thm:gendecomp} with $H=\mathcal K_1$, exactly as in the proof of Corollary \ref{cor:wheeldecomp}, which never used that $G=\cn$.
\end{proof}

\subsection{Fan graphs}\label{ssec:fan}

\begin{definition}
	The \emph{fan graph} $F_n:=P_n\ast\mathcal K_1$ is the cone over the path $P_n$ on $n$ vertices $x_1,\ldots,x_n$, with edges $x_ix_{i+1}$ for $1\le i\le n-1$. Write $v$ for the added vertex and $U=\{x_1,\ldots,x_n\}$.
\end{definition}

Recall the classical fact $\nu(P_n)=\lfloor n/2\rfloor$, and that $P_n$ has a perfect matching iff $n$ is even.

\begin{corollary}\label{cor:nufan}
	$\nu(F_n)=\lceil n/2\rceil$.
\end{corollary}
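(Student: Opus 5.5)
The plan is to mirror the proof of Corollary \ref{cor:nuwn}, now with the path in place of the cycle. Taking $G=P_n$ and $H=\mathcal K_1$ in Theorem \ref{thm:joinmatching}, with $\nu(P_n)=\lfloor n/2\rfloor$, $n_G=n$, $n_H=1$ and $\nu(\mathcal K_1)=0$, gives
\[
\nu(F_n)=\min\Big(\Big\lfloor\frac n2\Big\rfloor+1,\ n,\ \Big\lfloor\frac{n+1}2\Big\rfloor\Big).
\]
Since $\lfloor (n+1)/2\rfloor=\lceil n/2\rceil$, it suffices to show that this third term is the minimum.

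For the first term, note that $\lceil n/2\rceil\le\lfloor n/2\rfloor+1$ for every integer $n$, because ceiling and floor differ by at most one. For the second term, $\lceil n/2\rceil\le n$ holds for every $n\ge1$. Hence the minimum is $\lceil n/2\rceil$.

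As a cross-check, one could instead use Corollary \ref{cor:cone} directly. When $n$ is even, $P_n$ has a perfect matching, so $\nu(F_n)=n/2$. When $n$ is odd, it does not, so $\nu(F_n)=\lfloor n/2\rfloor+1=(n+1)/2$. In both cases this equals $\lceil n/2\rceil$.

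There is no real obstacle here. The only point needing care is the degenerate range of $n$: the cone corollary's argument treats the empty graph separately, but for $n\ge1$ every step above is valid.
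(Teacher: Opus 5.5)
Your proof is correct. Your main argument applies Theorem~\ref{thm:joinmatching} directly with $G=P_n$ and $H=\mathcal K_1$, and checks that $\lfloor (n+1)/2\rfloor=\lceil n/2\rceil$ is the smallest of the three terms, exactly as in Corollary~\ref{cor:nuwn}. The paper's proof is instead what you call the cross-check: it invokes Corollary~\ref{cor:cone} and splits on the parity of $n$, according to whether $P_n$ has a perfect matching.

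Since Corollary~\ref{cor:cone} is just Theorem~\ref{thm:joinmatching} specialized to one side being $\mathcal K_1$, the two routes are equivalent. Yours avoids the parity case split at the cost of two elementary floor/ceiling inequalities. The paper's route replaces those inequalities with the classical fact that $P_n$ has a perfect matching if and only if $n$ is even.
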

\begin{proof}
	If $n$ is even, $P_n$ has a perfect matching, so Corollary \ref{cor:cone} gives $\nu(F_n)=\nu(P_n)=n/2=\lceil n/2\rceil$. If $n$ is odd, $P_n$ has no perfect matching, so the same corollary gives $\nu(F_n)=\nu(P_n)+1=\lfloor n/2\rfloor+1=\lceil n/2\rceil$.
\end{proof}

\begin{theorem}\label{thm:fandim}
	For all $n\ge2$ and $1\le q\le\nu(F_n)=\lceil n/2\rceil$,
	\[
	\dimn\big(R/I(F_n)^{[q]}\big) = \Big\lceil\frac n2\Big\rceil+q-1.
	\]
\end{theorem}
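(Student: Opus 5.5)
The plan is to mirror the proof of Theorem \ref{thm:dim}, with the cycle replaced by the path $P_n$. By Corollary \ref{cor:conedecomp} applied to $G=P_n$, the dimension is $\max\{|F|:F\in\Delta_q(F_n)\}$. I split this maximum according to whether $v\in F$. The one genuinely new ingredient is a path analogue of Lemma \ref{lem:arcgen}:
\[
\max\{|F'| : F'\subseteq U,\ \nu(P_n[F'])\le k\} = \Big\lceil\frac n2\Big\rceil + k \qquad\text{for } 0\le k\le \Big\lceil\frac n2\Big\rceil-1 .
\]

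\emph{Path lemma.} I prove it by the same arc/gap bookkeeping as Lemma \ref{lem:arcgen}. Write $F'$ as $t$ maximal arcs of lengths $m_i$. Then $\nu(P_n[F'])=\sum_i\lfloor m_i/2\rfloor=:K$ and $|F'|=2K+\sum_i\varepsilon_i$, where $\varepsilon_i=m_i-2\lfloor m_i/2\rfloor\in\{0,1\}$.

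The difference from the cycle is that on a path only the $t-1$ interior gaps are forced to be nonempty, so $|F'|+t-1\le n$. Making every arc odd gives $|F'|=2K+t$, and the constraint becomes $2K+2t-1\le n$, that is, $t\le\lfloor (n+1-2K)/2\rfloor$. Hence
\[
|F'| \le 2K+\Big\lfloor\frac{n+1-2K}2\Big\rfloor = K+\Big\lceil\frac n2\Big\rceil .
\]
This bound is increasing in $K$, so it is maximised at $K=k$.

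For attainment I use one arc of length $2k+1$ and $t-1$ singleton arcs, separated by single-vertex gaps, starting at $x_1$. The range $k\le\lceil n/2\rceil-1$ guarantees $t\ge1$. The boundary case $n$ odd, $k=(n-1)/2$ is worth noting: there $F'=U$ itself is allowed, since $\nu(P_n)=\lfloor n/2\rfloor=k$, and the formula gives $|U|=n$, which is consistent.

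\emph{Case $v\notin F$.} The condition is $\nu(P_n[F'])\le q-1$. Applying the path lemma with $k=q-1$, which is in range because $q\le\lceil n/2\rceil$, gives $\lceil n/2\rceil+q-1$. This already provides the lower bound on the dimension.

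\emph{Case $v\in F$.} Corollary \ref{cor:conedecomp}(b) splits this into two subcases.
\begin{itemize}
\item If $P_n[F']$ has a perfect matching and $\nu\le q-1$, then $|F'|=2\nu\le 2q-2$, exactly as in Lemma \ref{lem:arcpm}. So $|F|\le 2q-1\le\lceil n/2\rceil+q-1$, using $q\le\lceil n/2\rceil$.
\item If $P_n[F']$ has no perfect matching and $\nu\le q-2$ (possible only for $q\ge2$), the path lemma with $k=q-2$ gives $|F|\le 1+\lceil n/2\rceil+q-2=\lceil n/2\rceil+q-1$.
\end{itemize}
Neither subcase exceeds the value from the case $v\notin F$, so the maximum is $\lceil n/2\rceil+q-1$, as claimed.

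The main obstacle is the path lemma. The endpoint effect changes the gap count from $t$ to $t-1$, and this is what turns the $\lfloor n/2\rfloor$ of the cycle into $\lceil n/2\rceil$. The step needing the most care is checking that the extremal configuration fits in every parity and at the top value $q=\lceil n/2\rceil$, where $F'=U$ can occur. The rest is a direct transcription of Theorem \ref{thm:dim}.
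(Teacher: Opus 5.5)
Your proposal is correct and follows essentially the same route as the paper. Both use arc/gap bookkeeping on $P_n$ in which only the $t-1$ interior gaps are forced, take every arc odd, maximize at $K=q-1$, and handle $v\in F$ through the two subcases of Corollary~\ref{cor:conedecomp}(b); the only differences are cosmetic: you isolate the path bound as a standalone lemma, and your count absorbs $F'=U$ automatically (one arc, no interior gaps) where the paper treats it separately.
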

\begin{proof}
	By Corollary \ref{cor:conedecomp} we split according to whether $v\in F$. Throughout, the argument is analogous to those in  Lemmas \ref{lem:arcgen},\ref{lem:arcpm}, with one structural difference i.e, a path has two endpoints, so an arc touching $x_1$ or $x_n$ needs no gap on that side.

	\emph{Case 1. $F=F'\subseteq U$.} If $F'=\emptyset$ the bound is trivial, so assume $F'\ne\emptyset$: then $P_n[F']$ is a disjoint union of $t\ge1$ arcs of lengths $m_1,\ldots,m_t\ge1$. Unlike on a cycle, the number of gaps $g$ separating them can be as small as $t-1$, when both outermost arcs have $x_1$ and $x_n$, respectively as endpoints or as large as $t+1$, when neither does. In every case
	\[
	|F'| + g \le n.
	\]
	Since a smaller \(g\) permits a larger value of \(|F|\) the maximum is attained when \(g=t-1 \), that is, when the first and last arcs have \(x_1\) and \(x_n\) respectively, as endpoints:
	\[
	|F'| + (t-1) \le n, \qquad\text{that is,}\qquad |F'|+t \le n+1.
	\]
	As in Lemma \ref{lem:arcgen}, taking every arc odd is optimal for a fixed matching budget $K=\sum_i\lfloor m_i/2\rfloor$, giving $|F'|=2K+t$, so
	\[
	2K+2t \le n+1.
	\]
	Maximizing at $K=q-1$,
	\[
	|F'|_{\max} = 2(q-1)+\Big\lfloor\frac{n+1-2(q-1)}2\Big\rfloor = \Big\lfloor\frac{n+2q-1}2\Big\rfloor = \Big\lceil\frac n2\Big\rceil+q-1,
	\]
	the last step a direct check on the parity of $n$. The bound is attained by choosing all arcs to have odd lengths with total matching budget $q-1$, taking $t=\left\lfloor\frac{n+1-2(q-1)}{2}\right\rfloor$ arcs, with the first and last arcs having $x_1$ and $x_n$, respectively, as endpoints, and separating consecutive arcs by single-vertex gaps. This construction gives \(\nu(P_n[F'])=q-1\) and\(|F'|=\left\lceil\frac{n}{2}\right\rceil+q-1\).

	One configuration falls outside this optimization that is $F'=U$, the whole path. This satisfies the membership condition $\nu(P_n)\le q-1$ only when $q\ge\lfloor n/2\rfloor+1$, which within our range $q\le\lceil n/2\rceil$ happens only at $n$ odd, $q=\lceil n/2\rceil=\lfloor n/2\rfloor+1$ exactly. There $|F'|=n$, while the bound above evaluates to
	\[
	\Big\lceil\frac n2\Big\rceil+q-1 = n
	\]
	at this same $(n,q)$, so $F'=U$ ties the maximum rather than exceeding it. At every other $(n,q)$ in range, $\nu(P_n)>q-1$ and $F'=U$ is correctly excluded.

	\emph{Case 2. $F=\{v\}\cup F'$.} If $P_n[F']$ has a perfect matching, a union of even arcs needs no boundary saving, so the same optimization with even arcs gives
	\[
	|F'| \le 2(q-1), \qquad\text{so}\qquad |F| \le 2q-1.
	\]
	If $q=1$ this second sub-case is vacuous, since it would require $\nu(P_n[F'])\le q-2=-1$, which is impossible. Assume $q\ge2$. If $P_n[F']$ has no perfect matching, the Case-1 bound with budget $q-2$ gives
	\[
	|F'| \le \Big\lceil\frac n2\Big\rceil+q-2, \qquad\text{so}\qquad |F| \le \Big\lceil\frac n2\Big\rceil+q-1,
	\]
	tying the maximum from the first case. Since $q\le\lceil n/2\rceil$ gives $2q-1\le\lceil n/2\rceil+q-1$, neither sub-case exceeds this bound.

	Combining both cases,
	\[
	\max\{|F|\} = \Big\lceil\frac n2\Big\rceil+q-1. \qedhere
	\]
\end{proof}

\begin{corollary}\label{cor:fanwheelcompare}
	For every $n\ge3$ and every $q$ in the common range $1\le q\le\lfloor n/2\rfloor$,
	\[
	\dimn\big(R/I(F_n)^{[q]}\big) - \dimn\big(R/I(\wn)^{[q]}\big) = \Big\lceil\frac n2\Big\rceil-\Big\lfloor\frac n2\Big\rfloor = \begin{cases}1,& n\text{ odd},\\0,& n\text{ even}.\end{cases}
	\]
\end{corollary}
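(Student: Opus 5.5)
This follows by direct subtraction of two closed-form dimension formulas already established. For the fan, Theorem \ref{thm:fandim} gives $\dimn(R/I(F_n)^{[q]})=\lceil n/2\rceil+q-1$ for $1\le q\le\lceil n/2\rceil$. For the wheel, Theorem \ref{thm:dim} gives $\dimn(R/I(\wn)^{[q]})=\lfloor n/2\rfloor+q-1$ for $n\ge3$ and $1\le q\le\lfloor n/2\rfloor$.

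First I check that both theorems apply to every $q$ in the stated range. Since $n\ge3\ge2$ and $\lfloor n/2\rfloor\le\lceil n/2\rceil$, any $q$ with $1\le q\le\lfloor n/2\rfloor$ lies in the range of Theorem \ref{thm:fandim}. It also lies in the range of Theorem \ref{thm:dim} verbatim. Thus the interval $1\le q\le\lfloor n/2\rfloor$ is the common range, exactly as the statement says.

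Subtracting the two formulas, the terms $q-1$ cancel and the difference is $\lceil n/2\rceil-\lfloor n/2\rfloor$. This is the standard parity identity: it equals $0$ when $n$ is even, since both are $n/2$. It equals $1$ when $n$ is odd, since the two values are $(n+1)/2$ and $(n-1)/2$.

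There is no real obstacle here; the only care needed is the range bookkeeping above. In particular, the odd-$n$ top power $q=\lceil n/2\rceil$ is deliberately excluded from the comparison, because Theorem \ref{thm:dim} is not stated there.
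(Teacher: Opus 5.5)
Your proposal is correct and is the same argument as the paper's proof: both quote Theorem \ref{thm:fandim} and Theorem \ref{thm:dim} on the common range $1\le q\le\lfloor n/2\rfloor$ and subtract, so the $q-1$ terms cancel and $\lceil n/2\rceil-\lfloor n/2\rfloor$ remains. Your explicit check that this range satisfies the hypotheses of both theorems is a small addition that the paper leaves implicit.
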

\begin{proof}
	Theorems \ref{thm:dim} and \ref{thm:fandim} give the two dimensions explicitly,
	\[
	\dimn\big(R/I(F_n)^{[q]}\big) = \Big\lceil\frac n2\Big\rceil+q-1, \qquad \dimn\big(R/I(\wn)^{[q]}\big) = \Big\lfloor\frac n2\Big\rfloor+q-1,
	\]
	for every $q$ in the common range. Subtracting cancels the $q-1$ term on both sides, leaving $\lceil n/2\rceil-\lfloor n/2\rfloor$, which equals $1$ when $n$ is odd and $0$ when $n$ is even.
\end{proof}

\begin{theorem}\label{thm:fantop}
	For every $n\ge2$, with $q=\nu(F_n)=\lceil n/2\rceil$: $I(F_n)^{[q]}=\mathfrak m^{[2q]}$, so $R/I(F_n)^{[q]}$ is Cohen-Macaulay with $\dimn=\depth=\reg=2q-1=2\lceil n/2\rceil-1$ and $\pd = n-2\lceil n/2\rceil+2$ (equal to $1$ if $n$ is odd, $2$ if $n$ is even) (the exact analogue of Theorem \ref{thm:top} for fan graphs).
\end{theorem}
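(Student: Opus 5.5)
The plan is to follow the proof of Theorem \ref{thm:top} line by line, with the path $P_n$ in place of the cycle $\cn$. By Theorem \ref{thm:fm} it is enough to verify condition (c) for $F_n$. The equivalence (a)$\Leftrightarrow$(b) then identifies $I(F_n)^{[q]}$ with $\mathfrak m^{[2q]}$ in the $n+1$ variables. Theorem \ref{thm:veronese} applies because $2q=2\lceil n/2\rceil\le n+1$, and it yields Cohen-Macaulayness together with $\dimn=\depth=\reg=2q-1$. The Auslander-Buchsbaum formula gives $\pd=(n+1)-(2q-1)=n-2\lceil n/2\rceil+2$, which equals $1$ for $n$ odd and $2$ for $n$ even. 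The hypothesis of Theorem \ref{thm:fm} that there are no isolated vertices holds, because $v$ is adjacent to every $x_i$ and $n\ge2$.

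\emph{Case $n$ odd.} Here $|V(F_n)|=n+1$ is even. Corollary \ref{cor:nufan} gives $\nu(F_n)=(n+1)/2$, so $F_n$ has a perfect matching and the first alternative of (c) holds.

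\emph{Case $n$ even.} Here $|V(F_n)|=n+1$ is odd, so we must check that $F_n\setminus\{u\}$ has a perfect matching for every vertex $u$.
\begin{itemize}
\item For $u=v$ we get $P_n$, which has a perfect matching because $n$ is even.
\item For $u=x_j$, the key difference from the wheel appears: $F_n\setminus\{x_j\}$ is no longer a cone over a single path. It is the cone $(P_{j-1}\sqcup P_{n-j})\ast\mathcal K_1$, since removing an interior vertex splits the path into two pieces. Because $n$ is even, $(j-1)+(n-j)=n-1$ is odd, so exactly one of the two pieces has odd order. Hence $G':=P_{j-1}\sqcup P_{n-j}$ has no perfect matching, and
\[
\nu(G')=\Big\lfloor\tfrac{j-1}2\Big\rfloor+\Big\lfloor\tfrac{n-j}2\Big\rfloor=\tfrac{n-2}2.
\]
Corollary \ref{cor:cone}, which holds for an arbitrary graph $H$ and so for a disconnected one, gives $\nu(G'\ast\mathcal K_1)=n/2$. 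This is exactly half of $|V(F_n\setminus\{x_j\})|=n$, so the remaining graph has a perfect matching.
\item The endpoint cases $j=1$ and $j=n$ are the same computation with one piece empty.
\end{itemize}
The second alternative of (c) therefore holds.

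The only step needing care is the non-symmetric vertex deletion in the even case. In Theorem \ref{thm:top} rotational symmetry reduced it to a single computation, but here the position $j$ matters. I would handle it uniformly with the parity count above rather than by exhibiting explicit matchings. A direct construction is also available as a check: match $v$ with the odd-position leftover vertex and pair the rest consecutively along each path piece.

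The degenerate case $n=2$ should be confirmed separately. Then $F_2=\mathcal K_3$, $q=1$, $I=\mathfrak m^{[2]}$, and $\pd=2$, which agrees with the formula.
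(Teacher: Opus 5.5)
Your proposal is correct and follows the paper's proof. Both arguments check condition (c) of Theorem \ref{thm:fm} by parity of $n$, using $P_n$ for $u=v$ and a perfect matching of $F_n\setminus\{x_j\}$ when $n$ is even, and then conclude with Theorem \ref{thm:veronese} and Auslander--Buchsbaum. The only difference is cosmetic: the paper applies Corollary \ref{cor:cone} only at the endpoints and builds an explicit matching for interior $x_j$, whereas you treat every $x_j$ uniformly by applying Corollary \ref{cor:cone} to the disconnected graph $P_{j-1}\sqcup P_{n-j}$, which is equally valid.
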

\begin{proof}
	By Theorem \ref{thm:fm} it suffices to check condition (c) for $F_n$.

	\emph{Case 1. $n$ odd.} Then $|V(F_n)|=n+1$ is even, and by Corollary \ref{cor:nufan},
	\[
	\nu(F_n) = \Big\lceil\frac n2\Big\rceil = \frac{n+1}2 = \frac{|V(F_n)|}2,
	\]
	so $F_n$ itself has a perfect matching.

	\emph{Case 2. $n$ even.} Then $|V(F_n)|=n+1$ is odd, so $F_n$ has no perfect matching, and we check every single-vertex deletion.
	\begin{itemize}
		\item $i=v$: $F_n\setminus\{v\}=P_n$, which has a perfect matching since $n$ is even.
		\item $i=x_j$ an endpoint ($j=1$ or $j=n$): $F_n\setminus\{x_j\}$ is the cone $P_{n-1}\ast\mathcal K_1$. Since $n$ is even, $n-1$ is odd, so $P_{n-1}$ has no perfect matching, and Corollary \ref{cor:cone} gives
		\[
		\nu\big(P_{n-1}\ast\mathcal K_1\big) = \nu(P_{n-1})+1 = \Big\lfloor\frac{n-1}2\Big\rfloor+1 = \frac n2 = \frac{|V(F_n\setminus\{x_j\})|}2,
		\]
		a perfect matching.
		\item $i=x_j$ an internal vertex ($1<j<n$): removing $x_j$ splits the path into two sub-paths of sizes $j-1$ and $n-j$, with
		\[
		(j-1)+(n-j) = n-1
		\]
		odd, so exactly one of the two sizes is odd. Match $v$ to an endpoint of the odd-sized sub-path, the remainder of that sub-path is then even and perfectly self-matched by consecutive pairs, and the other, already-even sub-path is likewise self-matched. This construction uses
		\[
		1+\frac{j-2}2+\frac{n-j}2
		\]
		edges (or the symmetric count if $n-j$ is the odd side instead), covering all $1+(j-1)+(n-j)=n$ vertices exactly: a perfect matching.
	\end{itemize}
	So condition (c) holds in both parities. By Theorem \ref{thm:fm}(a)$\Leftrightarrow$(b), $I(F_n)^{[\nu(F_n)]}=\mathfrak m^{[2\nu(F_n)]}$, and Theorem \ref{thm:veronese} together with Auslander-Buchsbaum give the stated invariants.
\end{proof}

\subsection{Generalized wheel graphs}\label{ssec:multiwheel}

\begin{definition}
	For $m\ge1$, we call $\cn\ast\mathcal K_m$ and $\cn\ast\overline{\mathcal K_m}$ \emph{generalized wheel graphs}: each adds $m$ new vertices to the cycle $\cn$ instead of just one, mutually adjacent in the first graph and mutually non-adjacent in the second.
\end{definition}

\begin{corollary}\label{cor:multiwheeldecomp}
	Let $F=F_U\cup F_W$ where $F_U\subseteq U$ and $F_W$ is a subset of the $m$ new vertices, and write $a=|F_U|$, $b=|F_W|$, $p=\nu(\cn[F_U])$:
	\[
	F\in\Delta_q(\cn\ast\mathcal K_m) \iff \min\Big(\big\lfloor\tfrac{a+b}2\big\rfloor,\ a+\big\lfloor\tfrac b2\big\rfloor,\ b+p\Big)<q,
	\]
	\[
	F\in\Delta_q(\cn\ast\overline{\mathcal K_m}) \iff \min\Big(\big\lfloor\tfrac{a+b}2\big\rfloor,\ a,\ b+p\Big)<q.
	\]
\end{corollary}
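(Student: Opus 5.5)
This is a direct specialization of Theorem \ref{thm:gendecomp}. Take $G=\cn$ on $U$ and $H$ to be either $\mathcal K_m$ or $\overline{\mathcal K_m}$ on the $m$ new vertices. Write $F_G=F_U$ and $F_H=F_W$, so that $|F_G|=a$ and $|F_H|=b$. Theorem \ref{thm:gendecomp} then says that $F\in\Delta_q(\cn\ast H)$ exactly when
\[
\min\Big(\big\lfloor\tfrac{a+b}2\big\rfloor,\ a+\nu(H[F_W]),\ b+\nu(\cn[F_U])\Big)<q .
\]
The third term is $b+p$ by definition of $p$. It therefore only remains to evaluate $\nu(H[F_W])$ in the two cases.

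In the first case, $H=\mathcal K_m$. Every induced subgraph of a complete graph is complete, so $H[F_W]\cong\mathcal K_b$. Its matching number is $\lfloor b/2\rfloor$: pair the vertices arbitrarily, and no matching can exceed half the vertex count. This makes the middle term $a+\lfloor b/2\rfloor$, which gives the first equivalence.

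In the second case, $H=\overline{\mathcal K_m}$. Here $H[F_W]\cong\overline{\mathcal K_b}$ has no edges, so $\nu=0$ and the middle term is $a$, which gives the second equivalence.

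No step is a genuine obstacle. The one point to check is that the degenerate cases $a=0$ or $b=0$ are covered. They are, because Theorem \ref{thm:joinmatching} (and hence Theorem \ref{thm:gendecomp} via Lemma \ref{lem:inducedjoin}) was proved including the case where one side of the join is empty, and with the conventions $\nu(\mathcal K_0)=\nu(\overline{\mathcal K_0})=0$ the formulas remain valid.
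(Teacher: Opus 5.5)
Your proposal is correct and follows the paper's proof: you specialize Theorem~\ref{thm:gendecomp} with $G=\mathcal C_n$ and $H=\mathcal K_m$ or $\overline{\mathcal K_m}$, then evaluate $\nu(H[F_W])$ as $\lfloor b/2\rfloor$ or $0$ respectively. Your check of the degenerate cases $a=0$ or $b=0$ is an extra remark that the paper leaves implicit. It is justified, since the proof of Theorem~\ref{thm:joinmatching} explicitly handles a join with an empty side.
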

\begin{proof}
	Both formulas are direct instances of Theorem \ref{thm:gendecomp} with $G=\cn$. Taking $H=\mathcal K_m$ gives $\nu(K_m[F_W])=\lfloor b/2\rfloor$, yielding the first formula. Taking $H=\overline{\mathcal K_m}$ gives $\nu(\overline{\mathcal K_m}[F_W])=0$, yielding the second.
\end{proof}

\begin{corollary}\label{cor:multiwheelnu}
	$\nu(\cn\ast\mathcal K_m) = \min\big(\lfloor n/2\rfloor+m,\ \lfloor m/2\rfloor+n,\ \lfloor(n+m)/2\rfloor\big)$ and $\nu(\cn\ast\overline{\mathcal K_m}) = \min\big(n,\ \lfloor n/2\rfloor+m,\ \lfloor(n+m)/2\rfloor\big)$.
\end{corollary}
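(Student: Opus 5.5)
This is a direct substitution into Theorem \ref{thm:joinmatching}, or equivalently into Corollary \ref{cor:km}, with $G=\cn$. The only input needed about the cycle is $n_G=n$ together with $\nu(\cn)=\lfloor n/2\rfloor$. The latter holds because $\cn$ has a matching of that size, taking every other edge around the cycle, and no graph on $n$ vertices can have a larger one.

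For the first formula I would take $H=\mathcal K_m$, so that $n_H=m$ and $\nu(\mathcal K_m)=\lfloor m/2\rfloor$; any pairing of vertices is a matching in a complete graph. The three terms of Theorem \ref{thm:joinmatching} then read $\nu(\cn)+m=\lfloor n/2\rfloor+m$, $\nu(\mathcal K_m)+n=\lfloor m/2\rfloor+n$ and $\lfloor (n+m)/2\rfloor$. This is exactly the first displayed formula of Corollary \ref{cor:km} with $n_G=n$ and $\nu(G)=\lfloor n/2\rfloor$.

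For the second formula I would take $H=\overline{\mathcal K_m}$, so that $\nu(H)=0$ and $n_H=m$. The terms become $\lfloor n/2\rfloor+m$, $0+n=n$ and $\lfloor(n+m)/2\rfloor$, matching the second formula of Corollary \ref{cor:km} after reordering the entries of the minimum.

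There is no real obstacle. The only point to check is the standard value $\nu(\cn)=\lfloor n/2\rfloor$, and no simplification of the minimum is claimed, so no case analysis on $m$ versus $n$ is needed.
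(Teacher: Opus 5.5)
Your proposal is correct and is essentially the paper's proof. The paper simply applies Corollary \ref{cor:km} with $G=\cn$, and your justification of $\nu(\cn)=\lfloor n/2\rfloor$ fills in the one implicit input.
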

\begin{proof}
	Corollary \ref{cor:km} with $G=\cn$.
\end{proof}

Unlike the case $m=1$, the exact Krull dimension of $R/I(\cn\ast\mathcal K_m)^{[q]}$ or $R/I(\cn\ast\overline{\mathcal K_m})^{[q]}$ for general $q\ge2$ requires optimizing simultaneously over the shape of $F_U$ \emph{and} the size $b$ of $F_W$, a two-parameter extremal problem we have not solved in general. We record only the easy case $q=1$, where the disjoint-union structure of the ordinary independence complex of a join (used already for $q=1$ in \cite{Mous,RPW}) applies directly.

\begin{corollary}\label{cor:multiwheelq1}
	$\dimn\big(R/I(\cn\ast\mathcal K_m)\big)=\max(\lfloor n/2\rfloor,1)=\lfloor n/2\rfloor$ for $n\ge3$, and $\dimn\big(R/I(\cn\ast\overline{\mathcal K_m})\big)=\max(\lfloor n/2\rfloor,m)$.
\end{corollary}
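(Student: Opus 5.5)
We compute the maximum size of a face of $\Delta_1$ of each generalized wheel, using the standard fact that the independence complex of a join is the disjoint union of the independence complexes of the two sides. Concretely, set $q=1$ in Corollary \ref{cor:multiwheeldecomp}. A set $F=F_U\cup F_W$ with both $a,b\ge1$ gives $\lfloor (a+b)/2\rfloor\ge1$, $a\ge1$, $a+\lfloor b/2\rfloor\ge1$ and $b+p\ge1$. So every term of the minimum is at least $1$, and $F\notin\Delta_1$; this just says that a cross edge lies inside $F$. Hence every face lies entirely in $U$ or entirely in the new vertex set $W$, and the dimension is the maximum of the two one-sided maxima.

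On the $U$ side, with $b=0$, the membership condition reduces to $\min(\lfloor a/2\rfloor,a,p)=p<1$, that is, $F_U$ is independent in $\cn$. By Lemma \ref{lem:arcgen} with $k=0$, this has maximum $\lfloor n/2\rfloor$ (the whole cycle $U$ is excluded since $\nu(\cn)\ge1$). Equivalently, the independence number of $\cn$ is $\lfloor n/2\rfloor$, attained by alternating vertices.

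On the $W$ side, with $a=0$, the condition becomes $\min(\lfloor b/2\rfloor,\lfloor b/2\rfloor,b)<1$ for $\mathcal K_m$, that is, $b\le1$. For $\overline{\mathcal K_m}$ the middle term is $a=0<1$, so every $F_W$ qualifies, and the maximum is $m$.

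Combining the two sides gives $\max(\lfloor n/2\rfloor,1)$ and $\max(\lfloor n/2\rfloor,m)$ respectively. Finally, $n\ge3$ gives $\lfloor n/2\rfloor\ge1$, which simplifies the first expression to $\lfloor n/2\rfloor$. There is no real obstacle here; the only step requiring any care is checking that mixed sets are always excluded, which is exactly the positivity of all terms of the minimum when $a,b\ge1$ noted above.
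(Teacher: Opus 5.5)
Your proof is correct and follows the paper's own route: both arguments rest on the disjoint-union structure $\Delta_1(\cn\ast H)=\Delta_1(\cn)\sqcup\Delta_1(H)$, take $\max(\lfloor n/2\rfloor,\alpha(H))$, and use $\alpha(\mathcal K_m)=1$ and $\alpha(\overline{\mathcal K_m})=m$. The only difference is that you derive the disjoint-union structure from Corollary \ref{cor:multiwheeldecomp} at $q=1$ and obtain $\alpha(\cn)=\lfloor n/2\rfloor$ from Lemma \ref{lem:arcgen} with $k=0$, whereas the paper simply cites both facts.
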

\begin{proof}
	Since $\Delta_1(\cn\ast H)=\Delta_1(\cn)\sqcup\Delta_1(H)$, the maximum face size is $\max(\lfloor n/2\rfloor,\alpha(H))$, where $\alpha(H)$ denotes the independence number. The claim follows since $\alpha(\mathcal K_m)=1$ and $\alpha(\overline{\mathcal K_m})=m$.
\end{proof}

\begin{proposition}\label{prop:multiwheeltop}
	For every $n\ge3$, $m\ge1$, with $q=\nu(\cn\ast\mathcal K_m)=\lfloor(n+m)/2\rfloor$: $I(\cn\ast\mathcal K_m)^{[q]}=\mathfrak m^{[2q]}$, so $R/I(\cn\ast\mathcal K_m)^{[q]}$ is Cohen-Macaulay with $\dimn=\depth=\reg=2q-1$ and $\pd=(n+m)-2q+1$ (equal to $1$ if $n+m$ is even, $2$ if $n+m$ is odd).
\end{proposition}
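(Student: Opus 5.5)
The plan is to reduce the statement to Theorem \ref{thm:fm} and then read off the invariants from Theorem \ref{thm:veronese}, exactly as in Theorems \ref{thm:top} and \ref{thm:fantop}.

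\emph{Step 1: the matching number.} First confirm that $\nu(\cn\ast\mathcal K_m)=\lfloor(n+m)/2\rfloor$, starting from Corollary \ref{cor:multiwheelnu}. Writing $\lfloor n/2\rfloor+m=\lfloor(n+2m)/2\rfloor\ge\lfloor(n+m)/2\rfloor$ and $\lfloor m/2\rfloor+n=\lfloor(m+2n)/2\rfloor\ge\lfloor(n+m)/2\rfloor$ shows that the third term of the minimum always wins. In other words, $\cn\ast\mathcal K_m$ has deficiency $0$ or $1$ according to the parity of $n+m$.

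\emph{Step 2: condition (c) of Theorem \ref{thm:fm}.} The graph has no isolated vertices, since $n\ge3$ and it is connected. If $n+m$ is even, Step 1 gives a perfect matching and (c) holds via its first alternative. If $n+m$ is odd, I check that every single-vertex deletion has a perfect matching. Two kinds of vertex can be deleted.
\begin{itemize}
\item Deleting a cycle vertex $x_j$ leaves $P_{n-1}\ast\mathcal K_m$. By Theorem \ref{thm:joinmatching}, its matching number is $\min\big(\lfloor(n-1)/2\rfloor+m,\ \lfloor m/2\rfloor+n-1,\ \lfloor(n+m-1)/2\rfloor\big)$. The same floor comparison as in Step 1, now with $a=n-1$ and $b=m$, shows the last term is the minimum. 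It equals $(n+m-1)/2$, which is half the number of vertices, so the deletion has a perfect matching.
\item Deleting a hub vertex leaves $\cn\ast\mathcal K_{m-1}$. If $m\ge2$, Step 1 applied with $m-1$ gives matching number $\lfloor(n+m-1)/2\rfloor=(n+m-1)/2$, again a perfect matching. If $m=1$, the deletion is just $\cn$ with $n$ even (because $n+1$ is odd), and $\cn$ has a perfect matching.
\end{itemize}

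\emph{Step 3: conclusion.} Theorem \ref{thm:fm}, via (c)$\Rightarrow$(b), gives $I(\cn\ast\mathcal K_m)^{[q]}=\mathfrak m^{[2q]}$ in the $n+m$ variables. Since $2q\le n+m$, Theorem \ref{thm:veronese} yields Cohen-Macaulayness and $\dimn=\depth=\reg=2q-1$. Auslander-Buchsbaum then gives $\pd=(n+m)-(2q-1)$, which equals $1$ or $2$ according to the parity of $n+m$.

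The only delicate point I expect is Step 2. There one must make sure the floor comparisons remain valid after a deletion, and must treat the degenerate case $m=1$, where deleting the hub leaves no $\mathcal K_{m-1}$ factor, separately. Everything else is direct substitution into results already proved.
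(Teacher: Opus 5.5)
Your proposal is correct and follows essentially the same route as the paper. You confirm that the third term of Corollary \ref{cor:multiwheelnu} is binding, then verify condition (c) of Theorem \ref{thm:fm} by splitting on the parity of $n+m$ and checking cycle-vertex and hub-vertex deletions, with $m=1$ treated separately; finally you read off the invariants from Theorem \ref{thm:veronese} and Auslander--Buchsbaum. Your rewriting $\lfloor n/2\rfloor+m=\lfloor(n+2m)/2\rfloor\ge\lfloor(n+m)/2\rfloor$ makes explicit the ``direct integer inequalities'' that the paper only asserts.
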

\begin{proof}
	The third term of Corollary \ref{cor:multiwheelnu} is always the binding one, so $q=\lfloor(n+m)/2\rfloor$ is indeed $\nu(\cn\ast\mathcal K_m)$: exactly as in the proof of Corollary \ref{cor:nuwn}, since $m\ge1$ and $n\ge3$, direct integer inequalities give $\lfloor(n+m)/2\rfloor\le\lfloor n/2\rfloor+m$ and $\lfloor(n+m)/2\rfloor\le\lfloor m/2\rfloor+n$.

	By Theorem \ref{thm:fm} it suffices to check condition (c).

	\emph{Case 1. $n+m$ even.} Then $|V(\cn\ast\mathcal K_m)|=n+m$ is even and $\nu=(n+m)/2=|V|/2$: the whole graph has a perfect matching.

	\emph{Case 2. $n+m$ odd.} Then $|V|=n+m$ is odd, and we check every single-vertex deletion.
	\begin{itemize}
		\item Deleting one of the $m$ new vertices: if $m=1$ this leaves $\cn$ itself, of even order $n=n+m-1$, with $\nu(\cn)=n/2$, a perfect matching. If $m\ge2$, it leaves $\cn\ast\mathcal K_{m-1}$, of even order $n+m-1$. The same binding-term argument, now with $m-1\ge1$, gives $\nu(\cn\ast\mathcal K_{m-1})=(n+m-1)/2=|V(\cn\ast\mathcal K_{m-1})|/2$, a perfect matching.
		\item Deleting a vertex of $U$ leaves $P_{n-1}\ast\mathcal K_m$, of even order $n+m-1$. By Theorem \ref{thm:joinmatching}, $\nu(P_{n-1}\ast\mathcal K_m)=\min(\lfloor(n-1)/2\rfloor+m,\ \lfloor m/2\rfloor+(n-1),\ \lfloor(n+m-1)/2\rfloor)$. The same style of inequality check shows the third term is binding, so $\nu(P_{n-1}\ast\mathcal K_m)=(n+m-1)/2=|V(P_{n-1}\ast\mathcal K_m)|/2$, a perfect matching.
	\end{itemize}
	By the rotational symmetry of $\cn$ and the interchangeability of the $m$ mutually adjacent new vertices, this covers every vertex, so condition (c) holds. Theorem \ref{thm:fm}(a)$\Leftrightarrow$(b), Theorem \ref{thm:veronese}, and Auslander-Buchsbaum give the stated invariants.
\end{proof}

\subsection{Complete split graphs}\label{ssec:split}

\begin{definition}
	For $r,s\ge0$, the \emph{complete split graph} $K_r\ast\overline{\mathcal K_s}$ consists of a clique on $r$ vertices and an independent set on $s$ vertices, with all edges between the two parts.
\end{definition}

\begin{lemma}\label{lem:splitnu}
	For $0\le a\le r$, $0\le b\le s$, $\nu\big((K_r\ast\overline{\mathcal K_s})[A\cup B]\big) = \min\big(a,\ \lfloor(a+b)/2\rfloor\big)$ for $A\subseteq V(K_r)$, $B\subseteq V(\overline{\mathcal K_s})$, $|A|=a,|B|=b$.
\end{lemma}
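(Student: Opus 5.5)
The plan is to reduce to Theorem \ref{thm:joinmatching} through Lemma \ref{lem:inducedjoin}. By that lemma,
\[
(K_r\ast\overline{\mathcal K_s})[A\cup B]=K_r[A]\ast\overline{\mathcal K_s}[B]=\mathcal K_a\ast\overline{\mathcal K_b},
\]
since an induced subgraph of a clique is a clique and an induced subgraph of an edgeless graph is edgeless. The cases $a=0$ or $b=0$ are already covered by the degenerate case treated at the start of the proof of Theorem \ref{thm:joinmatching}, so the join formula applies without restriction. Concretely, Corollary \ref{cor:km} with $G=\overline{\mathcal K_b}$ and $m=a$ (or directly Theorem \ref{thm:joinmatching} with $\nu(\mathcal K_a)=\lfloor a/2\rfloor$ and $\nu(\overline{\mathcal K_b})=0$) gives
\[
\nu=\min\Big(\big\lfloor\tfrac a2\big\rfloor+b,\ a,\ \big\lfloor\tfrac{a+b}2\big\rfloor\Big).
\]

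It then remains to show that the first term $\lfloor a/2\rfloor+b$ is never the strict minimum, i.e.\ that it is at least $\min\big(a,\lfloor(a+b)/2\rfloor\big)$. For this I would compare it with the third term directly:
\[
\Big\lfloor\tfrac a2\Big\rfloor+b=\Big\lfloor\tfrac{a+2b}2\Big\rfloor\ge\Big\lfloor\tfrac{a+b}2\Big\rfloor,
\]
using $b\ge0$ and the fact that adding an integer commutes with the floor. Hence the first term dominates the third, drops out of the minimum, and the formula $\min\big(a,\lfloor(a+b)/2\rfloor\big)$ follows.

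There is no real obstacle here. The only care needed is the bookkeeping of which part plays the role of $G$ and which of $H$ in Corollary \ref{cor:km}, and confirming that the empty cases ($a=0$ gives $0$, and $b=0$ gives $\lfloor a/2\rfloor$, which both formulas agree on) are consistent with the formula.
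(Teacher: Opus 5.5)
Your proof is correct and follows the paper's route. You identify the induced subgraph as $\mathcal K_a\ast\overline{\mathcal K_b}$, apply Theorem~\ref{thm:joinmatching}, and show the term $\lfloor a/2\rfloor+b$ never beats $\lfloor (a+b)/2\rfloor$; your one-line check $\lfloor a/2\rfloor+b=\lfloor (a+2b)/2\rfloor\ge\lfloor (a+b)/2\rfloor$ is a cleaner substitute for the paper's four-case parity argument through $\lfloor a/2\rfloor+\lceil b/2\rceil$.
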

\begin{proof}
	By Theorem \ref{thm:joinmatching} applied to $K_a\ast\overline{\mathcal K_b}=(K_r\ast\overline{\mathcal K_s})[A\cup B]$, and using $\nu(K_a)=\lfloor a/2\rfloor$ and $\nu(\overline{\mathcal K_b})=0$,
	\[
	\nu(K_a\ast\overline{\mathcal K_b}) = \min\Big(\big\lfloor\tfrac{a+b}2\big\rfloor,\ \big\lfloor\tfrac a2\big\rfloor+b,\ a\Big).
	\]
	It remains to show the middle term never wins. We prove the stronger, unconditional inequality
	\[
	\Big\lfloor\frac{a+b}2\Big\rfloor \le \Big\lfloor\frac a2\Big\rfloor+b \qquad\text{for all integers } a,b\ge0,
	\]
	which forces the middle term to dominate $\lfloor(a+b)/2\rfloor$, and hence also $\min(a,\lfloor(a+b)/2\rfloor)$, regardless of how $a$ and $\lfloor(a+b)/2\rfloor$ compare. Write $a=2p+r$ and $b=2s+t$ with $r,t\in\{0,1\}$, so that $\lfloor a/2\rfloor=p$ and $\lceil b/2\rceil=s+t$. A direct check of the four cases $(r,t)\in\{0,1\}\times \{0,1\}$ gives $\lfloor(r+t)/2\rfloor\le t$, so
	\[
	\Big\lfloor\frac{a+b}2\Big\rfloor = p+s+\Big\lfloor\frac{r+t}2\Big\rfloor \le p+s+t = \Big\lfloor\frac a2\Big\rfloor+\Big\lceil\frac b2\Big\rceil.
	\]
	Since $\lceil b/2\rceil\le b$ for every integer $b\ge0$, this gives
	\[
	\Big\lfloor\frac{a+b}2\Big\rfloor \le \Big\lfloor\frac a2\Big\rfloor+\Big\lceil\frac b2\Big\rceil \le \Big\lfloor\frac a2\Big\rfloor+b,
	\]
	as required, and $\nu(K_a\ast\overline{\mathcal K_b})=\min(a,\lfloor(a+b)/2\rfloor)$ follows.
\end{proof}

\begin{theorem}\label{thm:splitdim}
	For all $r,s\ge0$ and $1\le q\le\nu(K_r\ast\overline{\mathcal K_s})=\min(r,\lfloor(r+s)/2\rfloor)$,
	\[
	\dimn\big(R/I(K_r\ast\overline{\mathcal K_s})^{[q]}\big) = (q-1)+\max(q,s).
	\]
\end{theorem}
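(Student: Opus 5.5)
The plan is to reduce the dimension computation to an integer optimization using Lemma \ref{lem:splitnu}, and then solve that optimization by splitting on which term of the minimum is small. Write $X=K_r\ast\overline{\mathcal K_s}$. Since $\dimn(R/I(X)^{[q]})=\max\{|F|:F\in\Delta_q(X)\}$, I would write each $F$ as $A\cup B$ with $A\subseteq V(K_r)$, $B\subseteq V(\overline{\mathcal K_s})$, $|A|=a$, $|B|=b$. The complex $\Delta_q(X)$ is invariant under permutations within each part, and Lemma \ref{lem:splitnu} says $F\in\Delta_q(X)$ if and only if $\min(a,\lfloor(a+b)/2\rfloor)\le q-1$. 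So the dimension equals
\[
\max\big\{a+b:\ 0\le a\le r,\ 0\le b\le s,\ \min\big(a,\lfloor\tfrac{a+b}2\rfloor\big)\le q-1\big\}.
\]

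For the upper bound, the condition on the minimum is a disjunction, and I would treat its two branches separately. If $a\le q-1$, then $a+b\le (q-1)+s$. If instead $\lfloor(a+b)/2\rfloor\le q-1$, then $a+b\le 2q-1$. Either way $a+b\le\max\big((q-1)+s,\,2q-1\big)=(q-1)+\max(q,s)$.

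For attainment, I would exhibit one pair $(a,b)$ for each branch and check it against the range hypothesis $q\le\min(r,\lfloor(r+s)/2\rfloor)$.
\begin{itemize}
\item For the first branch, take $a=q-1$ and $b=s$. This is legal because $q-1<q\le r$, and it satisfies $a\le q-1$, giving $|F|=(q-1)+s$.
\item For the second branch, I need $a+b=2q-1$ with $a\le r$ and $b\le s$. This is possible since $2q\le 2\lfloor(r+s)/2\rfloor\le r+s$ gives $2q-1\le r+s$. For example, take $a=\min(r,2q-1)$ and $b=2q-1-a$, then check $b\le s$ using $r+s\ge 2q-1$. Such an $F$ has $\lfloor(a+b)/2\rfloor=q-1$, so $F\in\Delta_q(X)$ and $|F|=2q-1$.
\end{itemize}
Taking the larger of the two constructions gives the claimed value $(q-1)+\max(q,s)$.

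I also need to confirm the stated value of $\nu(X)$. Applying Lemma \ref{lem:splitnu} with $a=r$, $b=s$ gives $\nu(X)=\min(r,\lfloor(r+s)/2\rfloor)$, which matches the statement. The only mildly delicate step is the feasibility check for the second construction, that is, fitting $2q-1$ vertices within the caps $r$ and $s$. This is exactly where the hypothesis $q\le\lfloor(r+s)/2\rfloor$ is used. Once it holds, the rest is bookkeeping, and the range is nonempty only when $r\ge1$, so there are no degenerate cases to handle.
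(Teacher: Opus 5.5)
Your proposal is correct and follows essentially the same route as the paper: you use Lemma~\ref{lem:splitnu} to reduce membership in $\Delta_q$ to the disjunction $a\le q-1$ or $a+b\le 2q-1$, bound each branch, and attain the bounds with the same witnesses $(a,b)=(q-1,s)$ and $a=\min(r,2q-1)$, $b=2q-1-a$. Your extra check that $\nu(K_r\ast\overline{\mathcal K_s})=\min(r,\lfloor(r+s)/2\rfloor)$, obtained from the lemma with $(a,b)=(r,s)$, is a small addition that the paper leaves implicit.
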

\begin{proof}
	By Lemma \ref{lem:splitnu}, $F=A\cup B\in\Delta_q$ iff $a<q$ or $a+b\le2q-1$ (where $a=|A|,b=|B|$).

	If $a<q$: since $q\le\nu\le r$, we may take $a=q-1\le r-1<r$, and $b$ is then unconstrained up to $s$, giving $|F|=a+b\le(q-1)+s$.

	If $a+b\le2q-1$: the maximum of $a+b$ subject only to this constraint (together with $0\le a\le r$, $0\le b\le s$) is $2q-1$, and this value is attained: since $q\le\lfloor(r+s)/2\rfloor$ gives $2q\le r+s$, setting $a=\min(r,2q-1)$ and $b=2q-1-a$ gives $0\le a\le r$ and $0\le b\le s$ (If $2q-1\le r$, then $b=0\le s$). If instead $2q-1>r$, then $b=2q-1-r\le s$ follows directly from $2q\le r+s$. So this case contributes $2q-1$.

	Hence $\dimn = \max\big((q-1)+s,\ 2q-1\big) = (q-1)+\max(s,q)$.
\end{proof}

\subsection{Friendship graphs}\label{ssec:friendship}

\begin{definition}
	For $m\ge1$, the \emph{friendship graph} (windmill graph) is $\mathcal K_1\ast(m\mathcal K_2)$: a central vertex $v$ joined to $m$ vertex-disjoint edges, equivalently $m$ triangles sharing the vertex $v$.
\end{definition}

\begin{lemma}\label{lem:friendbasic}
	The graph $m\mathcal K_2$ consists of $m$ pairwise disjoint edges, and these $m$ edges themselves form a perfect matching of $m\mathcal K_2$. Hence $\nu(m\mathcal K_2)=m$, and since $m\mathcal K_2$ has a perfect matching, Corollary \ref{cor:cone} gives $\nu(\mathcal K_1\ast(m\mathcal K_2))=\nu(m\mathcal K_2)=m$. For $B\subseteq V(m\mathcal K_2)$, write $b_2$ for the number of the $m$ edges fully contained in $B$ and $b_1$ for the number with exactly one endpoint in $B$. Then $\nu((m\mathcal K_2)[B])=b_2$, and $(m\mathcal K_2)[B]$ has a perfect matching if and only if $b_1=0$.
\end{lemma}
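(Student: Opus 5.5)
The lemma bundles four elementary facts, and I will check them in order.

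\emph{The matching numbers.} By definition $m\mathcal K_2$ is the disjoint union of $m$ edges $e_1,\ldots,e_m$. These edges are pairwise vertex-disjoint and together cover all $2m$ vertices, so they form a perfect matching. Hence $\nu(m\mathcal K_2)=m$. Since $m\mathcal K_2$ has a perfect matching, the first alternative of Corollary \ref{cor:cone} gives $\nu(\mathcal K_1\ast(m\mathcal K_2))=m$ at once.

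\emph{The induced subgraph $(m\mathcal K_2)[B]$.} The components of $m\mathcal K_2$ are the edges $e_i$, so the components of $(m\mathcal K_2)[B]$ are the traces $e_i\cap B$. Each trace is one of three things:
\begin{itemize}
\item a full edge, when both endpoints lie in $B$ (there are $b_2$ of these);
\item an isolated vertex, when exactly one endpoint lies in $B$ (there are $b_1$ of these);
\item empty.
\end{itemize}
Matching numbers add over components, an edge contributes $1$, and an isolated vertex contributes $0$. Therefore $\nu((m\mathcal K_2)[B])=b_2$.

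\emph{The perfect-matching criterion.} The graph has $|B|=2b_2+b_1$ vertices. It has a perfect matching exactly when $2\nu=|B|$, that is, when $2b_2=2b_2+b_1$, that is, when $b_1=0$. Equivalently, an isolated vertex can never be covered by a matching, so a perfect matching exists if and only if there are no isolated vertices.

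No step is a real obstacle. The only point needing care is that the induced subgraph decomposes componentwise along the original edges, because $m\mathcal K_2$ has no edges between distinct $e_i$.
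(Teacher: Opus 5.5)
Your proof is correct and takes the same approach as the paper: $(m\mathcal K_2)[B]$ is a disjoint union of $b_2$ edges and $b_1$ isolated vertices, so $\nu=b_2$, and a perfect matching exists if and only if $b_1=0$. You also verify $\nu(m\mathcal K_2)=m$ and apply Corollary \ref{cor:cone} to the cone; the paper states these facts inline in the lemma, and your version just writes them out.
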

\begin{proof}
	$(m\mathcal K_2)[B]$ consists of $b_2$ disjoint edges and $b_1$ isolated vertices. Its matching number is therefore $b_2$, and it is perfectly matched exactly when there are no isolated vertices, that is, when $b_1=0$.
\end{proof}

\begin{theorem}\label{thm:frienddim}
	For all $m\ge1$ and $1\le q\le m$, $\dimn\big(R/I(\mathcal K_1\ast(m\mathcal K_2))^{[q]}\big) = m+q-1$.
\end{theorem}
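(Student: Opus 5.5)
We plan to argue exactly as in Theorems \ref{thm:dim} and \ref{thm:fandim}, computing $\max\{|F|:F\in\Delta_q\}$ by splitting on whether the hub $v$ lies in $F$. Write $F=B$ or $F=B\cup\{v\}$ with $B\subseteq V(m\mathcal K_2)$. Lemma \ref{lem:friendbasic} gives $\nu((m\mathcal K_2)[B])=b_2$, and $|B|=2b_2+b_1$ with $b_1+b_2\le m$. Membership is then read off from Corollary \ref{cor:conedecomp}.

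\emph{Case 1: $v\notin F$.} Here $F\in\Delta_q$ if and only if $b_2\le q-1$. We maximize $|B|=2b_2+b_1$ subject to $b_1+b_2\le m$ and $b_2\le q-1$. Rewriting the objective as $b_2+(b_1+b_2)\le (q-1)+m$ gives the upper bound. The bound is attained by taking $b_2=q-1$ full edges and one endpoint from each of the remaining $m-q+1$ edges. This is legitimate since $q\le m$, and it gives $|F|=m+q-1$.

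\emph{Case 2: $v\in F$.} There are two sub-cases.
\begin{enumerate}
\item[(i)] If $b_1=0$, so that $(m\mathcal K_2)[B]$ has a perfect matching, the condition is $b_2\le q-1$. Then $|F|=1+2b_2\le 2q-1$, and $2q-1\le m+q-1$ because $q\le m$.
\item[(ii)] If $b_1\ge1$, the condition is $b_2\le q-2$, which forces $q\ge2$; for $q=1$ this sub-case is vacuous. The same rewriting gives $|F|=1+b_2+(b_1+b_2)\le 1+(q-2)+m=m+q-1$.
\end{enumerate}
So in every sub-case $|F|\le m+q-1$.

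Combining the two cases, the maximum face size is exactly $m+q-1$, attained already in Case 1. This is the Krull dimension, since $\dimn(R/I)=\max\{|F|:F\in\Delta_q\}$ as recalled in the preliminaries.

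The only delicate point is the bookkeeping in Case 2: the perfect-matching alternative must not exceed the bound, and this is exactly where the hypothesis $q\le m$ is used. The rest is the one-line linear optimization above.
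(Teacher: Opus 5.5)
Your proposal is correct and follows the paper's proof step for step. Like the paper, you split on whether $v\in F$, read membership off Corollary \ref{cor:conedecomp} via Lemma \ref{lem:friendbasic}, and use the same extremal configuration. Your rewriting $2b_2+b_1=b_2+(b_1+b_2)\le b_2+m$ is a slightly tidier justification of the upper bounds in Case 1 and Case 2(ii) than the paper's informal ``take the budget as large as allowed.''
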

\begin{proof}
	Write $F=B$ or $F=\{v\}\cup B$ with $B\subseteq V(m\mathcal K_2)$, and $|B|=2b_2+b_1$ with $b_2+b_1\le m$ as in Lemma \ref{lem:friendbasic}.

	\emph{Case 1. $F=B$.} By Corollary \ref{cor:conedecomp}(a), applied within $m\mathcal K_2$ directly, $F\in\Delta_q$ if and only if $b_2<q$. Maximizing $|B|=2b_2+b_1$ subject to $b_2\le q-1$ and $b_1\le m-b_2$ means taking the budget $b_2=q-1$ as large as allowed and $b_1=m-q+1$ as large as the remaining vertices permit, valid since $q\le m$:
	\[
	|B| = 2(q-1)+(m-q+1) = m+q-1.
	\]

	\emph{Case 2. $F=\{v\}\cup B$.} By Corollary \ref{cor:conedecomp}(b), there are two possibilities. If $(m\mathcal K_2)[B]$ has a perfect matching, that is $b_1=0$, and $b_2<q$, then
	\[
	|B| = 2b_2 \le 2(q-1), \qquad\text{so}\qquad |F| \le 2q-1.
	\]
	If $q=1$ this second sub-case is vacuous, since $b_2<q-1=0$ is impossible for $b_2\ge0$. Assume $q\ge2$. If $(m\mathcal K_2)[B]$ has no perfect matching, that is $b_1\ge1$, and $b_2<q-1$, the same maximization with budget $q-2$ gives
	\[
	|B| \le m+q-2, \qquad\text{so}\qquad |F| \le m+q-1,
	\]
	tying the bound from the first case. Since $q\le m$ gives $2q-1\le m+q-1$, neither sub-case exceeds it.

	Combining both cases, $\dimn = m+q-1$.
\end{proof}

\begin{theorem}\label{thm:friendtop}
	For every $m\ge1$, with $q=\nu(\mathcal K_1\ast(m\mathcal K_2))=m$: $I(\mathcal K_1\ast(m\mathcal K_2))^{[q]}=\mathfrak m^{[2q]}$, so $R/I(\mathcal K_1\ast(m\mathcal K_2))^{[q]}$ is Cohen-Macaulay with $\dimn=\depth=\reg=2m-1$ and $\pd=2$.
\end{theorem}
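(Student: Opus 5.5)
The plan is to follow the template of Theorems \ref{thm:top} and \ref{thm:fantop}: verify condition (c) of Theorem \ref{thm:fm} for the friendship graph $X:=\mathcal K_1\ast(m\mathcal K_2)$, then read off the invariants from Theorem \ref{thm:veronese} and Auslander-Buchsbaum.

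First, $X$ has $2m+1$ vertices, an odd number, so it has no perfect matching. The first alternative of (c) therefore fails, and we must check that every single-vertex deletion has a perfect matching. Lemma \ref{lem:friendbasic} gives $\nu(X)=m$, which confirms $q=m$. There are two kinds of vertex.

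\emph{Deleting $v$.} This leaves $m\mathcal K_2$, whose $m$ edges are a perfect matching by Lemma \ref{lem:friendbasic}.

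\emph{Deleting a non-central vertex $x$.} Say $x$ is an endpoint of the edge $\{x,y\}$. What remains is $v$ together with $y$ and the other $m-1$ edges. Match $v$ with $y$ using the cross edge $vy$, and keep the other $m-1$ edges. This gives $m$ disjoint edges covering all $2m$ remaining vertices, hence a perfect matching. Alternatively, $X\setminus\{x\}=\mathcal K_1\ast((m-1)\mathcal K_2\sqcup\mathcal K_1)$, and Corollary \ref{cor:cone} applies, since $(m-1)\mathcal K_2\sqcup \mathcal K_1$ has no perfect matching. It then gives matching number $(m-1)+1=m=|V|/2$.

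Every vertex of $X$ is non-isolated, since each lies on a triangle through $v$, so the hypothesis of Theorem \ref{thm:fm} holds. Condition (c) is therefore satisfied, and (a)$\Leftrightarrow$(b) gives $I(X)^{[m]}=\mathfrak m^{[2m]}$ in the $2m+1$ variables.

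Theorem \ref{thm:veronese} with $k=2m\le 2m+1$ then yields Cohen-Macaulayness and $\dimn=\depth=\reg=2m-1$. Auslander-Buchsbaum gives $\pd=(2m+1)-(2m-1)=2$. None of these steps is a real obstacle. The only point needing care is the non-central deletion: the obvious matching on $m\mathcal K_2$ breaks at the edge containing $x$, and the hub $v$ must absorb the orphaned partner $y$.
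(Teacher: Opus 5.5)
Your proposal is correct and follows the paper's proof step for step. The odd vertex count rules out a perfect matching of the whole graph, deleting $v$ leaves $m\mathcal K_2$, a non-central deletion is handled by matching $v$ with the orphaned partner, and Theorem~\ref{thm:fm}, Theorem~\ref{thm:veronese} and Auslander--Buchsbaum then give the invariants. Your explicit check of the non-isolated-vertex hypothesis of Theorem~\ref{thm:fm}, and your alternative route through Corollary~\ref{cor:cone}, are small additions that the paper leaves implicit.
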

\begin{proof}
	The graph has
	\[
	|V(\mathcal K_1\ast(m\mathcal K_2))| = 2m+1,
	\]
	odd for every $m$, so it never has a perfect matching itself, consistent with $\nu=m<(2m+1)/2$. We check instead that every single-vertex deletion satisfies condition (c) of Theorem \ref{thm:fm}.

	Deleting $v$ leaves $m\mathcal K_2$, which is its own perfect matching by Lemma \ref{lem:friendbasic}.

	Deleting any other vertex, say an endpoint of edge $i$, leaves $v$, the $m-1$ untouched edges, and the other endpoint of the deleted edge $i$. Match $v$ to this other endpoint of the deleted edge, and match each of the other $m-1$ edges to itself, this uses \(1+(m-1) = m\) edges, covering all $2m$ remaining vertices exactly. By the symmetry of the $m$ triangles about $v$, the same construction works for every such vertex.

	So condition (c) holds. By Theorem \ref{thm:fm}(a)$\Leftrightarrow$(b),
	\[
	I(\mathcal K_1\ast(m\mathcal K_2))^{[m]} = \mathfrak m^{[2m]},
	\]
	and Theorem \ref{thm:veronese}, applied in the $2m+1$ variables of the graph, gives $\dimn=\depth=\reg=2m-1$. The Auslander-Buchsbaum formula then gives
	\[
	\pd = (2m+1)-(2m-1) = 2. \qedhere
	\]
\end{proof}

\section{Concluding remarks}\label{sec:concl}

The depth of every squarefree power of every wheel graph is now completely determined: $\depth(R/I(\wn)^{[q]})=2q-1$ for every $n\ge3$ and every $1\le q\le\nu(\wn)$, obtained by combining the cone-depth reduction of Proposition \ref{prop:reduction} with a general lower bound on the depth of a squarefree monomial ideal in terms of its generator degrees  and the explicit facet argument of Theorem \ref{thm:jbound}. Two questions remain open. A closed formula for $\reg(R/I(\wn)^{[q]})$ is known only at the two endpoints $q=1$ and $q=\nu(\wn)$, for $1<q<\nu(\wn)$ it remains open. A full Krull dimension formula for the generalized wheel graphs $\cn\ast\mathcal K_m$ and $\cn\ast\overline{\mathcal K_m}$ requires a two-parameter extremal optimization we have not carried out. Beyond these, Theorem \ref{thm:joinmatching} and its consequences hold for arbitrary graphs, and should apply just as directly to any further family expressible as a join.

\bigskip
\noindent
\textbf{Data availability.}

\noindent\textbf{Conflict of interest.} The authors declare that they have no conflict of interest.


\begin{thebibliography}{99}

\bibitem{Berge58} C. Berge, Sur le couplage maximum d'un graphe, \textit{C. R. Acad. Sci. Paris} \textbf{247} (1958), 258--259.

\bibitem{BHN} M. Bigdeli, J. Herzog and R. Zaare-Nahandi, On the index of powers of edge ideals, \textit{Comm. Algebra} \textbf{46}(3) (2018), 1080--1095.

\bibitem{CFL} M. Crupi, A. Ficarra and E. Lax, Matchings, squarefree powers, and Betti splittings, \textit{Illinois J. Math.} \textbf{69}(2) (2025), 353--372.

\bibitem{DGS} S. Das, A. Ghosh and S. Selvaraja, Regularity of squarefree powers of edge ideals of whiskered cycles, preprint, arXiv:2604.17100.

\bibitem{DRS} K. K. Das, A. Roy and K. Saha, Square-free powers of Cohen-Macaulay forests, cycles, and whiskered cycles, preprint, arXiv:2409.06021.

\bibitem{EFMM} E. Emtander, R. Fr\"oberg, F. Mohammadi and S. Moradi, Poincar\'e series of some hypergraph algebras, preprint, arXiv:0901.1534.

\bibitem{EHHM} N. Erey, J. Herzog, T. Hibi and S. Saeedi Madani, Matchings and squarefree powers of edge ideals, \textit{J. Combin. Theory Ser. A} \textbf{188} (2022), 105585.

\bibitem{EHHM2} N. Erey, J. Herzog, T. Hibi and S. Saeedi Madani, The normalized depth function of squarefree powers, \textit{Collect. Math.} \textbf{75} (2024), 409--423.

\bibitem{Fak} S. A. Seyed Fakhari, On the Castelnuovo-Mumford regularity of squarefree powers of edge ideals, \textit{J. Pure Appl. Algebra} \textbf{228}(3) (2024).

\bibitem{FM} A. Ficarra and S. Moradi, Monomial ideals whose all matching powers are Cohen-Macaulay, arXiv:2410.01666.

\bibitem{herhib} W. Bruns and H. Herzog, \textit{Cohen-Macaulay Rings}, 2nd ed., Cambridge Studies in Advanced Mathematics, Cambridge University Press, 1998.

\bibitem{HH} J. Herzog and T. Hibi, Cohen-Macaulay polymatroidal ideals, \textit{European J. Combin.} \textbf{27}(4) (2006), 513--517.

\bibitem{HVT1} H. T. H\`a and A. Van Tuyl, Monomial ideals, edge ideals of hypergraphs, and their graded Betti numbers, \textit{J. Algebraic Combin.} \textbf{27}(2) (2008), 215--245.

\bibitem{hoch1} M. Hochster, Cohen-Macaulay rings, combinatorics, and simplicial complexes, in \textit{Ring Theory II} (Proc. Second Conf., Univ. Oklahoma, Norman, OK, 1975), 171--223.

\bibitem{jacq} S. Jacques, \textit{Betti Numbers of Graph Ideals}, Ph.D. thesis, University of Sheffield, 2004.

\bibitem{katz} M. Katzman, Characteristic-independence of Betti numbers of graph ideals, \textit{J. Combin. Theory Ser. A} \textbf{113} (2006), 435--454.

\bibitem{Mous} A. Mousivand, Algebraic properties of product of graphs, \textit{Comm. Algebra} \textbf{40}(11) (2012), 4177--4194.

\bibitem{peeva} I. Peeva, \textit{Graded Syzygies}, Algebra and Applications 14, Springer-Verlag, London, 2011.

\bibitem{rova} M. Roth and A. Van Tuyl, On the linear strand of an edge ideal, \textit{Comm. Algebra} \textbf{35} (2007), 821--832.

\bibitem{RPW} S. A. Rather, S. Pirzada and B. A. Wani, Certain homological invariants of wheel graphs, preprint, 2026.

\bibitem{Tutte47} W. T. Tutte, The factorization of linear graphs, \textit{J. London Math. Soc.} \textbf{22} (1947), 107--111.

\bibitem{V1} R. H. Villarreal, Cohen-Macaulay graphs, \textit{Manuscripta Math.} \textbf{66}(3) (1990), 277--293.

\end{thebibliography}
\end{document}